\newcommand{\Z}{\mathbb {Z}}
\newcommand{\A}{\mathbb {A}}
\newcommand{\kk}{\mathbbm{k}}
\newcommand{\aaa}{\mathfrak{a}}
\newcommand{\be}{\mathfrak{b}}
\newcommand{\gee}{\mathfrak{g}}
\newcommand{\pe}{\mathfrak{p}}
\newcommand{\M}{\mathfrak{M}}
\newcommand{\m}{\mathfrak{m}}
\newcommand{\n}{\mathfrak{n}}
\newcommand{\Oo}{\mathcal{O}}
\newcommand{\Ee}{\mathcal{E}}
\newcommand{\Eff}{\mathcal{F}}
\newcommand{\Ge}{\mathcal{G}}
\newcommand{\rad}{\operatorname{rad}}
\newcommand{\ord}{\operatorname{ord}}
\newcommand{\Ext}{\operatorname{Ext}}
\newcommand{\Ker}{\operatorname{Ker}}
\newcommand{\End}{\operatorname{End}}
\newcommand{\Spec}{\operatorname{Spec}}
\newcommand{\Spm}{\operatorname{Spm}}
\newcommand{\isoto}{\cong}
\newcommand{\Iscr}{\mathscr{I}}
\newcommand{\gr}{\operatorname{gr}}
\newcommand{\defeq}{\overset{\operatorname{def}}{=}}
\newcommand{\Jot}{\mathfrak J}
\DeclareMathOperator{\pd}{pd}
\DeclareMathOperator{\depth}{depth}
\newcommand{\Serre}[2]{{#1}_{\bar X}{\langle #2 \rangle}}
\theoremstyle{plain}
\newtheorem{theorem}{\indent\bf Theorem}[section]
\newtheorem{lemma}[theorem]{\indent\bf Lemma}
\newtheorem{corollary}[theorem]{\indent\bf Corollary}
\newtheorem{proposition}[theorem]{\indent\bf Proposition}
\theoremstyle{definition}
\newtheorem{definition}[theorem]{\indent\sc Definition}
\newtheorem{example}[theorem]{\indent\sc Example}
\begin{document}

\title
[Auslander regularity of norm based extensions]
{Auslander regularity of norm based extensions of Weyl algebras}
\author{Yoshifumi Tsuchimoto}
\keywords{Jacobian problem, Dixmier conjecture,
Noncommutative algebraic geometry}
\footnote{2000\textit{ Mathematics Subject Classification}.
 Primary 14R15; Secondary 14A22.}
\begin{abstract}
We first prove that
 for a Weyl algebra over a field of positive characteristic,
its norm based extension is locally Auslander regular. 
We then prove that given an algebra which is Zariski locally isomorphic to the 
Weyl algebra, its norm based extension similarly defined is locally 
Auslander regular if and only if it is isomorphic to the original Weyl algebra.
\end{abstract}

\maketitle
\section{introduction}
Let $A$ be a Weyl algebra over a field of positive characteristic. 
It is finite over its center $Z(A)$ and may thus studied by looking at 
the sheaf $A_X$ of algebras over $X=\Spec(Z(A))\isoto \A^{2n}$.
It is known that 
the object is deeply related to Dixmier conjecture and Jacobian problem
(See for example \cite{T4}\cite{T5}\cite{Kontsevich2}\cite{AdjamagboEssen}.)
In \cite{T9}, the author defined the ``norm based extension'' (NBE) 
$A_{\bar X}$ of $A_X$ as an algebra sheaf over a projective space
$\bar X=\mathbb P^{2n}$. 
In this paper we show that the norm based extension
$A_{\bar X}$  is locally Auslander regular,
in the sense that every stalk of it is Auslander regular.

Next we take an algebra $B$ which is Zariski locally 
isomorphic to $A$, that means, $B$ is an algebra over $Z$ 
whose corresponding sheaf $B_X$ on $X$ is Zariski locally isomorphic to $A_X$. 
Such algebra is considered for example in \cite{Kontsevich1}.
Then we show that the norm based extension $B_{\bar X}$  defined in 
a similar manner as $A_{\bar X}$ 
is locally Auslander regular if and only if $B$ is isomorphic to $A$
as a $Z$-algebra.

\section{local rings in the sense of this paper}
In this paper, by a ring we mean a unital associative ring.
By a module we mean a left module unless otherwise specified.

Unfortunately, there are several definitions of "local rings". 
In several literature, 
the term ``local ring'' is meant to stand for a ring with only one
maximal left-ideal. See for example \cite[Proposition 10.1.1]{HGK} for
details. 
In contrast, we are interested in the rings with only one
maximal (two-sided) ideal.
So we introduce here somewhat unusual terms  
`demi local rings', `NAK local rings', and,
following Ramras,  a term `quasi local rings' to make a distinction. 

In this paper, we mainly deal with rings $A$ which are finite over 
their central subring $R$.
In such cases, the two theory of `local rings' are not very far.
Indeed, as we will illustrate in Lemma \ref{lemma:central},
 maximal left ideals of 
$A$ are  pretty well understood by looking at maximal ideals.

\subsection{demi local rings and quasi local rings.}
\label{subsection:localring}
\begin{definition}
Let us call a unital associative ring $A$ 
a {\it demi local ring} if $A$ has only one maximal ideal $\m$.
As is customary, 
we often say that $(A,\m)$ is a demi local ring to express this fact.
\end{definition}

\begin{definition}
Let $I$ be an ideal of a ring $A$. 
We say that {\it Nakayama's lemma holds} for $(A,I)$ if 
for any finitely generated left $A$-module $M$, we have
$$
I M=M \ \implies \ M=0.
$$ 
\end{definition}
For any ring $A$,  let us denote by $\rad(A)$ its Jacobson radical.
Namely, $\rad(A)$ is the intersection of all maximal left ideal of $A$.
It is well known that for any ring $A$, its radical is in fact an ideal of $A$
and that the Nakayama's lemma holds for $(A,\rad(A))$. 
(See for example \cite[Lemma 3.4.4, Lemma 3.4.11]{HGK})

We note:
\begin{lemma} \label{lemma:NAK} 
Let $(A,\m)$ be a demi local ring. The following conditions are equivalent.
\begin{enumerate}
\item Nakayama's lemma holds for $(A,\m)$.
\item Every maximal left ideal $J$ of $A$ contains $\m$ as a submodule.
\item $\m =\rad(A)$.
\end{enumerate}

\end{lemma}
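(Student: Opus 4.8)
The plan is to prove the three conditions equivalent via the cycle $(3)\Rightarrow(1)\Rightarrow(2)\Rightarrow(3)$. The first link is essentially free: the text recalled that Nakayama's lemma holds for $(A,\rad(A))$ for every ring $A$, so if $\m=\rad(A)$ then it holds for $(A,\m)$ verbatim.

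For $(1)\Rightarrow(2)$ I would fix an arbitrary maximal left ideal $J$ of $A$ and work with the cyclic left module $M=A/J$. Being the quotient of $A$ by a proper maximal left ideal, $M$ is nonzero, finitely generated, and simple (its submodules correspond to the left ideals between $J$ and $A$, of which there are only two). Since $\m$ is a two-sided ideal, $\m M$ is again a left $A$-submodule of $M$, so simplicity leaves only $\m M=0$ or $\m M=M$; the latter contradicts Nakayama's lemma for $(A,\m)$ applied to the finitely generated module $M$. Hence $\m M=0$, which I would unwind to $\m A\subseteq J$ and therefore $\m=\m\cdot1\subseteq J$, i.e.\ $\m$ sits inside $J$ as a left submodule.

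For $(2)\Rightarrow(3)$ I would argue both inclusions. Condition $(2)$ places $\m$ inside every maximal left ideal, hence inside their intersection $\rad(A)$. Conversely $\rad(A)$ is, as recalled above, a two-sided ideal, and it is proper since a demi local ring is nonzero and therefore has at least one maximal left ideal, which contains $\rad(A)$ but not $1$. A Zorn's lemma argument then embeds the proper two-sided ideal $\rad(A)$ in some maximal two-sided ideal, which can only be $\m$ by the demi local hypothesis; so $\rad(A)\subseteq\m$, and the two inclusions give $\m=\rad(A)$.

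I do not expect a genuine obstacle: the proof is a short assembly of the standard facts about the Jacobson radical quoted just before the statement. The only points to watch are that $\m$ and $\rad(A)$ are honestly two-sided — this is what makes $\m M$ a submodule in $(1)\Rightarrow(2)$ and what forces $\rad(A)$ into $\m$ in $(2)\Rightarrow(3)$ — and the ongoing need to keep the maximal \emph{left} ideals of $A$ conceptually separate from its unique maximal \emph{two-sided} ideal $\m$.
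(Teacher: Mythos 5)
Your proof is correct and follows essentially the same route as the paper: the cycle $(3)\Rightarrow(1)\Rightarrow(2)\Rightarrow(3)$, with $(3)\Rightarrow(1)$ from Nakayama's lemma for $(A,\rad(A))$, $(1)\Rightarrow(2)$ by applying Nakayama to the cyclic module $A/J$ (your direct simplicity phrasing is just the paper's contradiction argument via $J+\m=A$ stated positively), and $(2)\Rightarrow(3)$ using that $\rad(A)$ is a proper two-sided ideal and $\m$ is the unique maximal two-sided ideal. No gaps; your version merely spells out details the paper leaves implicit.
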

\begin{proof}
That $ (3)$ implies $(1)$ is a consequence of the usual 
Nakayama's lemma cited above. 

$(1)\implies(2)$: Let $J$ be a maximal left ideal of $A$.
If $J\not \supset \m$, then by the maximality of $J$ we have
$ J+\m=A$. In other words, we have $\m A/J=0$. By the assumption we have 
$A/J=0$, which is a contradiction.

$(2) \implies (3)$ is a consequence of the definition of $\rad(A)$
and the fact that $\rad(A)$ is an ideal of $A$.
\end{proof}

\begin{definition}
Let us call a demi local ring $(A,\m)$ a {\it NAK local ring} if
it satisfies the equivalent conditions of Lemma \ref{lemma:NAK}.
If in addition $A/\m$ is simple Artinian, we call it a {\it quasi local ring}.
\end{definition}
The term quasi local ring appears in papers of Ramras. 
See for example \cite{Ramras1}.
The term also appears in papers such as \cite{DAJordan1},\cite{BHM}.

\subsection{demi local rings which are finite over the center}

We are mainly interested in cases where our ring $A$ is 
finite over its center $Z(A)$.
For technical reasons we consider a slightly
general situation and employ a central subring $R$ of $A$ (that means,
a subalgebra of $Z(A)$)  instead of $Z(A)$ itself.
\begin{lemma} 
\label{lemma:central}
Let $(A,\m)$ be a demi local ring which is finite over
its central subring $R$.
Then:
\begin{enumerate}
\item $\m_R=\m\cap R$ is the unique maximal ideal of $R$ 
(so that $(R,\m_R)$ is a local ring 
in the usual sense in commutative algebra).  
\item $A/\m$ is isomorphic to a full matrix algebra $M_{k_0}(D)$ over a
skew field $D$. (In other words, $A/\m$ is simple Artinian.)
\item If furthermore $A$ is a quasi local ring, then
maximal left ideals of $A$  in one to one correspondence with 
points of the projective space $\mathbb P^{k_0-1}(D)$ of 
dimension $k_0-1$ over $D$.
\end{enumerate}

\end{lemma}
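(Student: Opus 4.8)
The plan is to establish (1), then use it in (2), and use both in (3). \emph{Part (1):} Since $R$ is a central subring it is commutative, and $\m_R=\m\cap R$ is a proper ideal of $R$ because $1\notin\m$. I would show that every maximal ideal $\mathfrak p$ of $R$ equals $\m_R$. The ideal $\mathfrak p A$ is two-sided, as $\mathfrak p$ lies in the center of $A$, and it is proper: by the determinant-trick form of Nakayama's lemma for the finite $R$-module $A$, the equality $\mathfrak p A=A$ would give $(1-a)A=0$ for some $a\in\mathfrak p$, hence $1=a\in\mathfrak p$, which is absurd. Since $\m$ is the only maximal ideal of $A$ and every proper two-sided ideal of $A$ is contained in a maximal one, $\mathfrak p A\subseteq\m$; therefore $\mathfrak p\subseteq\mathfrak p A\cap R\subseteq\m\cap R=\m_R\subsetneq R$, and the maximality of $\mathfrak p$ forces $\mathfrak p=\m_R$. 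In particular the proper ideal $\m_R$ lies in some maximal ideal of $R$, which is then $\m_R$ itself, so $(R,\m_R)$ is local in the usual sense.

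\emph{Part (2):} The quotient $A/\m$ is simple since $\m$ is maximal among proper two-sided ideals. Because $A$ is finite over $R$ and $\m_R\subseteq\m$, the ring $A/\m$ is a finitely generated module over the field $R/\m_R$ (by (1)), hence a finite-dimensional algebra over a field, in particular Artinian. A simple Artinian ring is isomorphic to a full matrix ring $M_{k_0}(D)$ over a skew field $D$ by the Wedderburn--Artin theorem.

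\emph{Part (3):} Assume $A$ is quasi local, so $\m=\rad(A)$ by Lemma \ref{lemma:NAK}; then $\m$ is contained in every maximal left ideal of $A$, and passing to the quotient gives a bijection between the maximal left ideals of $A$ and those of $A/\m\isoto M_{k_0}(D)$. It remains to identify the latter. Let $V=D^{k_0}$ be the right $D$-vector space of columns, so $M_{k_0}(D)=\End_D(V)$ acts on $V$ on the left; as $V$ is, up to isomorphism, the unique simple left $M_{k_0}(D)$-module, every maximal left ideal is the kernel of a surjective module map $\pi\colon M_{k_0}(D)\to V$. Writing $v_0=\pi(I)\ne 0$, the module property gives $\pi(M)=M v_0$, so this kernel is $\{M : M v_0=0\}$, which depends only on the right $D$-line $v_0 D$, i.e.\ on the point $[v_0]\in\mathbb P^{k_0-1}(D)$. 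Conversely every such line arises this way, and a short linear-algebra argument — separating $D$-independent vectors by a suitable matrix — shows that distinct lines give distinct ideals; this is the asserted bijection.

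I expect the only delicate point to be (3), where one must keep the left/right module conventions over the noncommutative $D$ straight; part (1) is essentially a Nakayama/lying-over argument and part (2) is Wedderburn--Artin.
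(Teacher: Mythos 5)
Your proof is correct and follows essentially the same route as the paper: the determinant-trick Nakayama argument forcing $\mathfrak p A\subseteq\m$ for (1), Wedderburn--Artin applied to the finite-dimensional simple algebra $A/\m$ over $R/\m_R$ for (2), and the correspondence with maximal left ideals of $M_{k_0}(D)$ for (3). The only difference is that you spell out the identification of maximal left ideals of $M_{k_0}(D)$ with points of $\mathbb P^{k_0-1}(D)$, which the paper leaves implicit.
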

\begin{proof}

(1)
For any maximal ideal $\mathfrak a$ of $R$, we have
$$
\mathfrak a A  \neq A
$$
by the usual Nakayama's lemma for the commutative ring $(R,\m_R)$. 
By the maximality of $\m$ we have $\mathfrak a A \subset \m$, 
so that we have  $\mathfrak a \subset \m \cap R=\m_R$. 
By the maximality of $\mathfrak a$ we have $\mathfrak a=\m_R$.

(2) $A/\m$ is a finite dimensional simple algebra over a field $R/\m_R$.
Thus we may use Wedderburn's theorem. 

(3) Since every maximal left ideal $J$ contains 
 $\m$, it corresponds bijectively to a
maximal left ideals of $A/\m\isoto M_{k_0}(D)$.
\end{proof}

\begin{lemma}
 \label{property:NAK}
Let $(A,\m)$ be a demi local ring which is 
finite over its central subalgebra $R$.
We put $\m_R=\m\cap R$. 
(Note that $(R,\m_R)$ is a local ring in view of Lemma \ref{lemma:central}.)
Then $(A,\m)$ is quasi local if and only if 
 the $\m$-adic topology on $A$
coincides with the $\m_R A$-adic topology. Namely, if 
$\m^{k_0} \subset \m_R A $ for
some $k_0 \gg 0$.
\end{lemma}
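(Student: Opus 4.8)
The plan is to prove both implications by exploiting Lemma \ref{lemma:central}, which tells us that $(R,\m_R)$ is a genuine local ring and $A/\m \cong M_{k_0}(D)$. First I would recall that, since $A$ is finite over $R$, the quotient $A/\m_R A$ is a finite-dimensional $R/\m_R$-algebra, hence Artinian; its Jacobson radical is therefore nilpotent, and one has the containment $\m_R A \subset \rad(A/\m_R A)$ because $\m_R$ is central and $A/\m_R A$ is local (its only maximal ideal being the image of $\m$, by Lemma \ref{lemma:central}(1) applied after passing to the quotient, or directly: any maximal ideal of $A$ pulls back to a maximal ideal of $R$, which must be $\m_R$). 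Thus $\m/\m_R A$ is the Jacobson radical of the Artinian ring $A/\m_R A$, so it is nilpotent: $\m^{k_0}\subset \m_R A$ for some $k_0$. This shows the stated topological condition \emph{always} holds once $(A,\m)$ is quasi local in the NAK sense, i.e. once $\m = \rad(A)$.

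**The ``only if'' direction.** Suppose $(A,\m)$ is quasi local. Then in particular it is NAK local, so $\m=\rad(A)$ by Lemma \ref{lemma:NAK}. I claim $\m^{k_0}\subset\m_R A$ for some $k_0$. As above, $A/\m_R A$ is a finite-dimensional algebra over the field $R/\m_R$, hence left Artinian, hence its Jacobson radical $\Jot$ is nilpotent. The image $\bar\m$ of $\m$ in $A/\m_R A$ is an ideal with simple Artinian quotient $A/\m$, and every element of $\bar\m$ that is not in a maximal left ideal would contradict $\m=\rad(A)$; more carefully, $\rad(A/\m_R A)$ contains $\bar\m$ since $\m \supset \m_R A$ and $\rad$ is compatible with quotients by ideals contained in the radical. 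Conversely $\rad(A/\m_R A) \subset \bar\m$ because $\m$ is the unique maximal ideal. Hence $\bar\m = \Jot$ is nilpotent, giving $\m^{k_0}\subset\m_R A$, and then the $\m$-adic and $\m_R A$-adic topologies coincide (each contains a power of the other: $\m_R A \subset \m$ trivially, and $\m^{k_0}\subset \m_R A$).

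**The ``if'' direction.** Conversely suppose $\m^{k_0}\subset\m_R A$ for some $k_0$. I must show $(A,\m)$ is quasi local, i.e. (a) $\m=\rad(A)$ and (b) $A/\m$ is simple Artinian. Part (b) is already Lemma \ref{lemma:central}(2) and requires no hypothesis. For (a): by Lemma \ref{lemma:NAK} it suffices to verify Nakayama's lemma for $(A,\m)$. Let $M$ be a finitely generated left $A$-module with $\m M = M$. Iterating, $\m^{k_0} M = M$, hence $\m_R A\, M = M$, i.e. $\m_R M = M$. But $M$ is a finitely generated module over the commutative local ring $(R,\m_R)$ — it is finitely generated over $A$ and $A$ is finite over $R$, so $M$ is finitely generated over $R$ — and the classical Nakayama lemma for $(R,\m_R)$ forces $M=0$. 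Thus Nakayama holds for $(A,\m)$, so $\m=\rad(A)$, and combined with (b) we conclude $(A,\m)$ is quasi local.

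**Main obstacle.** The delicate point is pinning down the equality $\bar\m = \rad(A/\m_R A)$ in the ``only if'' direction: one must check that passing to the quotient by the central ideal $\m_R A$ interacts correctly with the Jacobson radical, using that $\m_R A \subset \m = \rad(A)$ so that maximal left ideals of $A$ and of $A/\m_R A$ correspond, and that $A/\m_R A$ being Artinian is what upgrades ``the radical'' to ``a nilpotent ideal.'' The ``if'' direction is essentially a clean reduction to commutative Nakayama via the finiteness of $A$ over $R$, and presents no real difficulty.
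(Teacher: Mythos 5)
Your proof is correct, and one of its two halves diverges from the paper. The ``if'' direction is exactly the paper's argument: from $\m M=M$ deduce $\m_R M=M$, note $M$ is finite over $R$, and finish with commutative Nakayama for $(R,\m_R)$, the simplicity of $A/\m$ coming for free from Lemma \ref{lemma:central}. For the ``only if'' direction the paper argues differently: it observes that the descending chain $\{\m^k(A/\m_R A)\}$ of subspaces of the finite-dimensional $R/\m_R$-module $A/\m_R A$ stabilizes at some $\m^{k_0}(A/\m_R A)=N$, and then applies Nakayama for $(A,\m)$ (available since quasi local implies NAK local) to $N$ to get $N=0$, i.e.\ $\m^{k_0}\subset\m_R A$. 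You instead use $\m=\rad(A)$ together with the standard identity $\rad(A/I)=\rad(A)/I$ for an ideal $I\subset\rad(A)$ to identify $\m/\m_R A$ with $\rad(A/\m_R A)$, and then invoke nilpotency of the Jacobson radical of the Artinian ring $A/\m_R A$ (Hopkins' theorem, which the paper cites elsewhere for a later lemma). Both routes rest on the finiteness of $A/\m_R A$ over the field $R/\m_R$; the paper's is marginally more self-contained (only Nakayama is used), while yours makes the structural point explicit that the image of $\m$ is the nilpotent radical of the Artinian fibre. One caveat: in your opening paragraph you justify the inclusion of the image of $\m$ in $\rad(A/\m_R A)$ merely by the quotient having a unique maximal two-sided ideal; that alone is not sufficient (the paper's example $U(\gee)_{(0)}$ with $\m^2=\m$ shows a unique maximal two-sided ideal need not lie in the Jacobson radical). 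Your second paragraph repairs this correctly by using $\m=\rad(A)$ and compatibility of the radical with quotients by ideals contained in it, so the proof as a whole stands.
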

\begin{proof}
By Lemma \ref{lemma:central} we see that $A/\m$ is simple Artinian.

Assume $\m^{k_0} \subset \m_R A $ for
some $k_0 \gg 0$.
Since $A$ is finite over $R$, 
$M$ is a finite $R$-module. On the other hand $M=\m M$ implies $M=\m^{k_0} M$, 
so that we have
$M=\m_R M$. Thus we see that $M=0$ by using 
the usual Nakayama's lemma for $(R,\m_R)$.

Conversely, let us assume $(A,\m)$ is quasi local. 
$A/\m_RA$ is a module of finite rank over a field $R/\m_R$.
Thus a decreasing sequence $\{\m^k (A/\m_RA)\}_{k=1}^\infty$ stops. 
In other words, there exists some integer $k_0$ such that
$$
N=\cap_{k} \m^k (A/\m_R A) =\m^{k_0} (A/\m_RA)
$$
holds. The module $N$ clearly is a finitely generated over $A$ and 
satisfies $\m N=N$. By the assumption we have $N=0$.
\end{proof}

\noindent
Note:
There exists a demi local ring finite over its center which 
is not a quasi local ring.
Indeed, let us consider a semisimple Lie algebra $\gee$ over a field
$\\k$ of positive characteristic 
and consider its universal enveloping algebra $U=U(\gee)$.
Then $U$ is finite over its center. The localization $A=U_{(0)}$ of $U$
at the origin is a demi local ring but is not quasi local, since
we have 
$
[\gee,\gee]=\gee
$
so that the unique maximal ideal $\m$ of $A$ satisfies $\m^2=\m$.

\subsection{Rings which are finite over their centers}
In the world of  commutative algebraic geometry, 
it is useful to consider commutative rings at each of their maximal ideals.
We localize a commutative ring $R$ at its maximal ideal $\m$ and
obtain the localization $R_{\m}$. $R_{\m}$ carries the local information
of $R$ around $\m$.
It is also useful to consider the completion $\hat R_{\m}$ to
obtain the local information. 
We would like to extend this idea to deal with non commutative rings 
which have sufficiently many maximal ideals.

Let $A$ be a ring with a maximal ideal $\M$. 
We use its $\M$-adic completion
$$
\hat A_{\M}=\varprojlim_n A/\M^n
$$
as a substitute for ``localization of $A$ at $\M$.''
However,
when we deal with non commutative rings, 
whether we use localization or completion,
these notions are sometimes insufficient.
For example, when $A$ is equal to $ U(\gee)$, the 
universal enveloping algebra of a simple Lie algebra $\gee$ over a field
$\kk$ of characteristic $p\neq 0$, then a maximal ideal $\M=\gee U(\gee)$
satisfies $\M^2=\M$, which is in deep contrast with the NAK lemma for 
quasi local ring as developed in subsection \ref{subsection:localring}.  As a result, the completion
$\varprojlim_{n} A/\M^n$ is equal to the ``small'' residue ring $A/\M$
 in this case.   

Localizations and formal completions of such algebras behaves much like 
the ones in the commutative case.
Let us  distinguish between such behaviors.
The idea is follows. Given a ring $A$ 
as in the definition, we may surely localize it with respect to $\m$ 
and obtain $A\otimes_R R_\m$. We may also 
consider the completion $A\otimes_R \hat R_{\m}$.
Such localizations and completions are  common in the usual
 commutative algebraic geometry. 
Then we examine if the completion $A \otimes_ R R_\m$ is a good one. 
We first note:
\begin{lemma} 
Let $A$ be a ring which is finite over its central subring $R$.
Then:
\begin{enumerate}
\item  For any maximal ideal $\M$ of $A$, $\aaa=\M\cap R$ is a maximal ideal
of $R$.
\item For any maximal ideal $\m$ of $R$, the set
$$
S=\{\M\in \Spm(A) ; \M \cap R=\m\}
$$
of maximal ideals which lie over $\m$ is a finite set.
\item 
 The intersection $\mathfrak N=\cap_{\M \in S}\M$
 satisfies
$
\mathfrak N^{k_0}\subset \m A
$
for some positive integer $k_0$.
\end{enumerate}
\end{lemma}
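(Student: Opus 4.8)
The plan is to establish (1) by the standard ``integral extension'' manoeuvre, and then to read off (2) and (3) from the structure of the finite-dimensional algebra $A/\m A$.

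For (1), put $\aaa=\M\cap R$; this is a proper ideal of $R$ since $1\notin\M$. The inclusion $R\hookrightarrow A$ induces an injection of $R/\aaa$ into $A/\M$ whose image lies in the centre, and $A/\M$ is a simple ring which is finite, hence integral, over $R/\aaa$. I would argue that $R/\aaa$ is a field as follows: given $\bar x\in R/\aaa$ with $\bar x\neq 0$, the two-sided ideal $\bar x\,(A/\M)$ is nonzero and therefore equals $A/\M$, so $\bar x\bar y=1$ for some $\bar y\in A/\M$; as $\bar x$ is central, $\bar y$ is a two-sided inverse of $\bar x$. Integrality gives a monic relation $\bar y^{\,n}+c_{n-1}\bar y^{\,n-1}+\cdots+c_0=0$ with $c_i\in R/\aaa$, and multiplying through by $\bar x^{\,n-1}$ and using $\bar x\bar y=1$ rewrites $\bar y=\bar x^{-1}$ as a polynomial in $\bar x$ with coefficients in $R/\aaa$. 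Hence $\bar x$ is invertible in $R/\aaa$, so $R/\aaa$ is a field and $\aaa$ is maximal.

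For (2) and (3), set $k=R/\m$ and $\bar A=A/\m A$; since $A$ is a finitely generated $R$-module, $\bar A$ is a finite-dimensional $k$-algebra, in particular an Artinian ring, and $\bar A\neq 0$ by the usual Nakayama's lemma for the finitely generated module $A$ over the local ring $(R,\m)$ (if $A=0$ there is nothing to prove). Every $\M\in S$ satisfies $\m A\subseteq \M A=\M$, so $\M/\m A$ is a maximal two-sided ideal of $\bar A$; conversely, by part (1) any maximal ideal of $A$ containing $\m A$ meets $R$ in a maximal ideal containing $\m$, hence in $\m$ itself, so it lies in $S$. Thus $S$ is in bijection with the set of maximal two-sided ideals of $\bar A$. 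By Wedderburn--Artin, $\bar A/\rad(\bar A)$ is a finite product of simple Artinian rings, whose maximal two-sided ideals are the kernels of the projections onto the factors; since $\rad(\bar A)$ is contained in every maximal two-sided ideal of $\bar A$, there are only finitely many such ideals, which proves (2), and their intersection equals $\rad(\bar A)$ because the corresponding intersection in the semisimple ring $\bar A/\rad(\bar A)$ is $0$. Hence the image of $\mathfrak N=\bigcap_{\M\in S}\M$ in $\bar A$ is exactly $\rad(\bar A)$. Finally $\rad(\bar A)$ is nilpotent: it is finitely generated, the descending chain $\rad(\bar A)^j$ stabilises by finite-dimensionality, and Nakayama's lemma for $(\bar A,\rad(\bar A))$ forces the stable term to vanish. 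Choosing $k_0$ with $\rad(\bar A)^{k_0}=0$ yields $\mathfrak N^{k_0}\subseteq\m A$, which is (3).

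The only step requiring genuine care is the field criterion in (1): the classical lying-over theorem is stated for commutative integral extensions, whereas here $A/\M$ is merely assumed simple, so one cannot quote it verbatim and must first observe that a nonzero central element of a simple ring is a unit before invoking integrality. Everything else — finite-dimensionality of $A/\m A$, finiteness of the set of maximal two-sided ideals of an Artinian ring, and nilpotence of the radical of a finite-dimensional algebra — is standard, so once (1) is secured the remaining assertions follow formally from the Wedderburn--Artin theory applied to $A/\m A$.
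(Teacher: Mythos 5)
Your argument is correct, and for parts (2) and (3) it is essentially the paper's own route: pass to the Artinian algebra $A/\m A$ over the field $R/\m$, identify the maximal ideals of $A$ lying over $\m$ with the maximal two-sided ideals of $A/\m A$, use Wedderburn--Artin on the semisimple quotient to see there are finitely many of them and that their common intersection is the radical, and conclude by nilpotency of the radical (the paper cites Hopkins' theorem here; you reprove it by the stabilizing chain plus Nakayama, which is fine and keeps the argument self-contained). Where you genuinely diverge is part (1): the paper applies the commutative Nakayama lemma to the nonzero finite $R$-module $A/\M$ to produce a maximal ideal $\be$ of $R$ with $(A/\M)\be\neq A/\M$, whence $A\be\subset\M$ by maximality of $\M$ and so $\aaa=\be$ is maximal; you instead prove directly that $R/\aaa$ is a field via a noncommutative lying-over argument (a nonzero central element of the simple ring $A/\M$ is a unit, and integrality of the inverse over the central subring pulls the inverse back into $R/\aaa$). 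The paper's version is shorter and uses only module-theoretic Nakayama; yours is the more structural argument and, as you note, correctly avoids quoting the commutative lying-over theorem verbatim. One small repair: your justification that $\m A\neq A$ invokes Nakayama ``over the local ring $(R,\m)$,'' but $R$ is not assumed local; either localize at $\m$ first, or use the determinant-trick form of Nakayama (if $\m A=A$ then some $a\in 1+\m$ kills $A\ni 1$, forcing $1\in\m$), which gives the claim as stated. This does not affect the rest, since the assertions are vacuous when no maximal ideal of $A$ lies over $\m$.
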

\begin{proof}
\noindent (1)
By using the usual Nakayama lemma for a finite module $R/\M$ over 
the commutative ring $R$, we see that there exist  a maximal left ideal $\be$
of $R$ such that $(A/\M)\be \neq A/\M$. It follows that 
$\be \subset \M \cap R=\aaa$. By using the maximality of $\be$ we have 
$\aaa=\be$.

\noindent (2)
The algebra $\bar A=A/\m$ is an Artinian ring which is finite over a field 
$\bar R=R/\m$.
Thus we see that the set $S$ is a finite set. 

\noindent (3) According to a theorem of Hopkins \cite[Theorem 3.5.1]{HGK},
the Jacobson radical $\rad(\bar A)$ of $\bar A$ is nilpotent. It follows  that
any maximal ideal $\bar \M$ of $\bar A$ contains $\rad(\bar A)$ as a subset. 
On the other hand, 
according to a theorem \cite[Theorem 3.5.5]{HGK} 
on semisimple Artinian algebras, 
we see that $\bar A/\rad(\bar A)$ is a direct sum
of full matrix algebras over skew fields. 
We therefore easily see that $\bar {\mathfrak N}=\rad(\bar A)$ 
and  $\bar {\mathfrak N}$ is therefore nilpotent.

\end{proof}

We then employ the following definition.
\begin{definition}
Let $A$ be a ring which is finite over its central subring $R$.
Let $\m$ be a maximal ideal of $R$.
We know that there exists only a finite maximal ideals 
$\M_1,\M_2,\dots, \M_s$.
Let us say that 
$A$ has a {\it formally completely decomposable fiber} at $\m$ if 
the natural homomorphism
$$
A\otimes _R \hat R_{\m}
\to 
\prod_{j} \hat A_{\M_j}
$$
is bijective.
When there exists only one maximal ideal of $A$ which lies over $\m$,
(that means, when $s=1$), then we say that $A$ has 
a {\it formally indecomposable fiber} at $\m$.

\end{definition}
\begin{note}
In the above definition, if $A$ has formally completely decomposable 
fiber at $\m$,
then each $\hat A_{M_j}$ is flat over $A$. Indeed, $A\otimes_R \hat R_\m$
is flat over $A$ as the usual commutative ring theory tells.
$\hat A_{M_j}$ is a direct summand of $A\otimes_R \hat R_\m$ as an $A$-module
and is hence an $A$-flat module.

\end{note}

The author admits that the definition above is a temporary one.
There should be a better way to manage the situation
which is independent of the choice of the
center $R$.

As a result of  a usual argument of topology, we obtain the following:
\begin{lemma}
Let $A$ be a ring which is finite over its central subring $R$.
Let $\m$ be a maximal ideal of $R$.
Let $\M_1,\M_2,\dots, \M_s$ be the maximal ideals which lies over $\m$.
Then $A$ has a formally completely decomposable fiber at $\m$ if and 
only if the following condition is satisfied.
$$ 
\forall k>0 \exists N>0 
; \quad (\M_1^N \cap \dots \cap \M_s ^N ) \subset (\M_1\cap \dots \cap \M_s)^k
$$

\end{lemma}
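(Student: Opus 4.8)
Write $\mathfrak N=\M_1\cap\dots\cap\M_s$ and, for $N\ge 1$, $\mathfrak N_N=\M_1^N\cap\dots\cap\M_s^N$, so that $\mathfrak N^N\subset\mathfrak N_N$ and $\mathfrak N_{N+1}\subset\mathfrak N_N$ for all $N$. The plan is to identify both sides of the map appearing in the definition with completions of $A$ along filtrations by two-sided ideals, and then to analyse the resulting comparison map by a Mittag--Leffler argument; the displayed condition will turn out to be exactly the condition for that comparison map to be bijective.

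First I would rewrite the source. Since each $\M_i$ is two-sided and $\M_i\cap R=\m$, we have $\m A\subset\mathfrak N$, hence $\m^nA=(\m A)^n\subset\mathfrak N^n$ for all $n$; conversely the previous lemma gives $\mathfrak N^{k_0}\subset\m A$, so $\mathfrak N^{nk_0}\subset\m^nA$. Thus the filtrations $\{\m^nA\}_n$ and $\{\mathfrak N^n\}_n$ of $A$ are mutually cofinal. As $A$ is module-finite over $R$, the canonical map $A\otimes_R\hat R_\m\to\varprojlim_n A/\m^nA$ is an isomorphism by standard commutative algebra (one uses here that $R$ is Noetherian, which holds in all the cases of interest), and combining this with the cofinality just noted identifies $A\otimes_R\hat R_\m$ with the $\mathfrak N$-adic completion $\varprojlim_n A/\mathfrak N^n$. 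For the target I would first check that for $i\neq j$ the two-sided ideals $\M_i^N$, $\M_j^N$ are comaximal: since $\M_i+\M_j=A$ for distinct maximal two-sided ideals, $A=(\M_i+\M_j)^{2N-1}$, and collecting in each term of the expansion the factors equal to a given one of $\M_i$, $\M_j$ shows every term lies in $\M_i^N+\M_j^N$. The Chinese remainder theorem then gives $A/\mathfrak N_N\isoto\prod_jA/\M_j^N$ for each $N$, and passing to the inverse limit, $\prod_j\hat A_{\M_j}\isoto\varprojlim_N A/\mathfrak N_N$. A routine diagram chase then shows that under these identifications the map of the definition becomes the canonical comparison map $\varphi\colon\varprojlim_n A/\mathfrak N^n\to\varprojlim_n A/\mathfrak N_n$ induced by $\mathfrak N^n\subset\mathfrak N_n$.

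It remains to decide when $\varphi$ is bijective. The levelwise exact sequences $0\to\mathfrak N_n/\mathfrak N^n\to A/\mathfrak N^n\to A/\mathfrak N_n\to 0$ of towers give an exact sequence $0\to\varprojlim_n(\mathfrak N_n/\mathfrak N^n)\to\varprojlim_n A/\mathfrak N^n\to\varprojlim_n A/\mathfrak N_n\to\varprojlim\nolimits^{1}(\mathfrak N_n/\mathfrak N^n)$. Because $\m^nA\subset\mathfrak N^n$, each $\mathfrak N_n/\mathfrak N^n$ is a subquotient of $A/\m^nA$, which is of finite length (it is finitely generated over $R/\m^n$, a ring whose maximal ideal is nilpotent); hence the tower $\{\mathfrak N_n/\mathfrak N^n\}_n$ is Mittag--Leffler, its $\varprojlim^{1}$ vanishes, $\varphi$ is automatically surjective, and $\varphi$ is injective exactly when $\varprojlim_n(\mathfrak N_n/\mathfrak N^n)=0$. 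Using the Mittag--Leffler property once more, together with the fact that the inverse limit of a surjective tower surjects onto each of its terms, $\varprojlim_n(\mathfrak N_n/\mathfrak N^n)=0$ holds if and only if for every $k$ the decreasing chain of submodules $(\mathfrak N_N+\mathfrak N^k)/\mathfrak N^k$ ($N\ge k$) of the finite-length module $\mathfrak N_k/\mathfrak N^k$ eventually reaches $0$, i.e.\ if and only if for every $k$ there is an $N$ with $\mathfrak N_N\subset\mathfrak N^k$. Since $\mathfrak N^k=(\M_1\cap\dots\cap\M_s)^k$, this is precisely the displayed condition.

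I expect the genuinely non-formal points to be the noncommutative Chinese remainder theorem (comaximality of powers of distinct maximal two-sided ideals) and the Mittag--Leffler input in the last paragraph: the slogan ``two filtrations by ideals have the same completion iff they are mutually cofinal'' yields the ``if'' implication at once, but the ``only if'' implication genuinely uses that the subquotients $\mathfrak N_n/\mathfrak N^n$ have finite length, which is the real content of the argument.
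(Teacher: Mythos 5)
Your proof is correct, and it is worth noting that the paper itself offers no argument for this lemma: it is introduced with the phrase ``as a result of a usual argument of topology'' and closed with a bare QED symbol. Your write-up supplies precisely the argument being gestured at, and does so carefully: identifying the source with the $\mathfrak N$-adic completion via cofinality of $\{\m^nA\}$ and $\{\mathfrak N^n\}$ (part (3) of the preceding lemma), identifying the target with $\varprojlim_N A/(\M_1^N\cap\dots\cap\M_s^N)$ via the noncommutative Chinese remainder theorem for the pairwise comaximal two-sided ideals $\M_j^N$ (your comaximality-of-powers argument is valid because two-sided ideals absorb on both sides), and then deciding bijectivity of the comparison map by the $\varprojlim$--$\varprojlim^1$ sequence, where the Mittag--Leffler/finite-length input is exactly what makes the ``only if'' direction go through --- without it, vanishing of the inverse limit would not force the images to stabilize at zero, as the tower $\Z\xleftarrow{\;p\;}\Z\xleftarrow{\;p\;}\cdots$ shows. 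Two caveats, both harmless in the paper's applications but worth making explicit: the identification $A\otimes_R\hat R_\m\isoto\varprojlim_n A/\m^nA$ and the finite length of $A/\m^nA$ each require $R$ (or at least $\m$) to be Noetherian/finitely generated; you flag this for the first point, but at the second you justify finite length only by nilpotence of the maximal ideal of $R/\m^n$, which by itself is not sufficient (a local ring with square-zero but infinitely generated maximal ideal has infinite length). Since the lemma as printed carries no Noetherian hypothesis, your proof really establishes it under the standing assumption --- satisfied everywhere in the paper, where $R$ is a polynomial ring, a localization, or a completion thereof --- that $R$ is Noetherian; stating that assumption up front would make the argument airtight.
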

\qed

\begin{corollary}
Let $A$ be a ring which is finite over its central subring $R$.
Let $\m$ be a maximal ideal of $R$.
If there exists only one  maximal ideal of $A$ which lies over $\m$,
then 
$A$ has a formally indecomposable fiber at $\m$.
\end{corollary}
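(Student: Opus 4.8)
The plan is to obtain this Corollary as a direct specialization of the preceding Lemma to the case $s=1$. First I would unwind the Definition: to say that $A$ has a \emph{formally indecomposable fiber} at $\m$ is, by definition, to say that $s=1$ and that $A$ has a formally completely decomposable fiber at $\m$, i.e.\ that the natural homomorphism $A\otimes_R\hat R_{\m}\to\hat A_{\M_1}$ is bijective. Thus, under the hypothesis that exactly one maximal ideal $\M_1$ of $A$ lies over $\m$, the only thing left to verify is the topological criterion furnished by the Lemma.

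Next I would simply apply that criterion with $s=1$. In this case it asserts that $A\otimes_R\hat R_{\m}\to\hat A_{\M_1}$ is bijective if and only if
$$
\forall k>0 \ \exists N>0 ; \quad \M_1^N \subset \M_1^k .
$$
This condition is trivially satisfied: for a given $k>0$ one takes $N=k$, so that $\M_1^N=\M_1^k\subset\M_1^k$. Hence the map is bijective, which is exactly the assertion that $A$ has a formally indecomposable fiber at $\m$.

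I expect no genuine obstacle here; essentially all of the content sits in the preceding Lemma, and the only point requiring (minimal) care is the bookkeeping of definitions — namely recognising that the criterion of the Lemma collapses to a tautology once the intersection $\M_1\cap\dots\cap\M_s$ has a single factor. (Alternatively, one may bypass the Lemma and argue directly that the filtrations $\{\m^n A\}_{n}$ and $\{\M_1^{n}\}_{n}$ are cofinal: the inclusion $\m\subset\M_1$ gives $\m^{n}A\subset\M_1^{n}$ for all $n$, while the already established relation $\M_1^{k_0}\subset\m A$ gives $\M_1^{k_0 k}\subset\m^{k}A$; passing to inverse limits then identifies $A\otimes_R\hat R_{\m}=\varprojlim_{n}A/\m^{n}A$ with $\hat A_{\M_1}=\varprojlim_{n}A/\M_1^{n}$.)
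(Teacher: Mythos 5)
Your proposal is correct and matches the paper's (unwritten) argument: the paper derives this corollary immediately from the preceding lemma by taking $s=1$, where the intersection condition degenerates to the tautology $\M_1^N\subset\M_1^k$ with $N=k$. Your parenthetical cofinality argument is a fine direct alternative, but the main route is exactly what the paper intends.
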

\qed

\begin{proposition}
If $A$ is a commutative  ring which is finite over a noetherian 
subring $R$, then $A$ has a formally completely decomposable fiber at every closed point of $R$.
\end{proposition}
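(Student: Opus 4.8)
The plan is to reduce the statement to the elementary fact that a module-finite algebra over a complete semilocal noetherian ring is a finite product of complete local rings, handled by hand via comaximality and the Chinese Remainder Theorem. Fix a closed point, i.e.\ a maximal ideal $\m$ of $R$. Since $A$ is finite over the noetherian ring $R$, it is itself noetherian, and $\bar A = A/\m A$ is a finite algebra over the field $R/\m$, hence Artinian; let $\M_1,\dots,\M_s$ be its finitely many maximal ideals, identified with the maximal ideals of $A$ lying over $\m$. Put $\Jot=\M_1\cap\dots\cap\M_s$.

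The first step is to identify $A\otimes_R\hat R_{\m}$ with the $\m A$-adic completion $\varprojlim_n A/\m^n A$ of $A$ \emph{as a ring}. This is the standard commutative-algebra lemma that for a finitely generated module $M$ over a noetherian ring $R$ the canonical map $M\otimes_R\hat R_{\m}\to\varprojlim_n M/\m^n M$ is an isomorphism; applied to $M=A$ it is a ring isomorphism because the tensor product of the $R$-algebra $A$ with $\hat R_{\m}$ is computed module-theoretically. This is the one place where noetherianity of $R$ and module-finiteness of $A$ are genuinely used (Artin--Rees is what makes the lemma work), and it is the main — essentially the only non-formal — point of the proof.

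Next I would compare the $\m A$-adic and $\Jot$-adic topologies on $A$. One has $\m A\subseteq\Jot$ trivially, while the image of $\Jot$ in the Artinian ring $\bar A$ is its Jacobson radical, hence nilpotent, so $\Jot^N\subseteq\m A$ for some $N>0$; therefore $\varprojlim_n A/\m^n A\cong\varprojlim_n A/\Jot^n$. Since the $\M_j$ are pairwise distinct maximal ideals they are pairwise comaximal, hence so are their powers, giving $\Jot^n=\M_1^n\cdots\M_s^n=\bigcap_j\M_j^n$, and the Chinese Remainder Theorem yields isomorphisms $A/\Jot^n\cong\prod_j A/\M_j^n$ compatible with the transition maps (all induced by the projections $A\to A/\M_j^n$). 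Passing to the inverse limit, and using that a finite product commutes with inverse limits, gives $\varprojlim_n A/\Jot^n\cong\prod_j\varprojlim_n A/\M_j^n=\prod_j\hat A_{\M_j}$.

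Stringing these isomorphisms together produces an isomorphism $A\otimes_R\hat R_{\m}\to\prod_j\hat A_{\M_j}$, and it remains only to check it agrees with the natural homomorphism of the definition; this is immediate by tracing the construction, since every identification above is induced by the canonical reduction maps $A\to A/\M_j^n$, so the composite sends $a\in A$ to the tuple of its images in the $\hat A_{\M_j}$ and is $\hat R_{\m}$-linear, which characterizes the natural map. (As a sanity check, this also verifies the $\forall k\,\exists N$ criterion of the preceding lemma, by the comaximal-powers computation $\bigcap_j\M_j^n=\Jot^n$.)
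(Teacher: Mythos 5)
Your argument is correct. The paper itself disposes of this proposition in a single sentence (``a direct consequence of the NAK lemma for commutative rings''), implicitly leaning on the criterion of the preceding lemma: $A$ has a formally completely decomposable fiber at $\m$ if and only if for every $k$ there is $N$ with $\M_1^N\cap\dots\cap\M_s^N\subseteq(\M_1\cap\dots\cap\M_s)^k$; in the commutative case pairwise comaximality of the $\M_j$ gives $\M_1^k\cap\dots\cap\M_s^k=(\M_1\cap\dots\cap\M_s)^k$, so one may simply take $N=k$. You instead verify the definition directly: you identify $A\otimes_R\hat R_{\m}$ with the $\m A$-adic (equivalently $\Jot$-adic, $\Jot=\M_1\cap\dots\cap\M_s$) completion using module-finiteness over the noetherian $R$, and then split that completion by the Chinese Remainder Theorem together with the fact that finite products commute with inverse limits, finally checking compatibility with the natural map. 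What your route buys is self-containedness: the paper's criterion lemma is stated with its proof omitted (``a usual argument of topology''), and your computation in effect supplies exactly that omitted content in the commutative setting, at the cost of being longer than the one-line comaximality observation you relegate to a closing sanity check. The hypotheses enter in the right places in both versions: noetherianity of $R$ and finiteness of $A$ are used precisely where you say, in the isomorphism $A\otimes_R\hat R_{\m}\cong\varprojlim_n A/\m^n A$, and commutativity is used through comaximality and the identification of products with intersections of the $\M_j^n$.
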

\begin{proof}
This is a direct consequence of the NAK lemma for commutative rings.
\end{proof}

\begin{example}
Let $R=\kk$ be field.
Let $A\subset M_n(\kk)$ be the ring of upper triangular matrices over $R$:
$$
A=\{(a_{ij})\in M_n(R)|\   a_{ij}=0 \text{ if } i> j\}
$$
Then the maximal ideals which are on $\m$ are
$\M_1,\M_2,\dots, \M_n$ defined as follows.
$$
\M_k=\{(a_{ij})\in A; a_{kk}=0\}
$$
We then note that 
$A\otimes_\kk \hat \kk_{\m}= A$ does not decompose into a direct sum of rings.
$A$ in this case therefore does not have completely decomposable fiber at $\m$.
It might be also useful to note that $\M_k^2=\M_k$ holds for any $k$ so that
each  completion $\varprojlim A/\M_k^{n}$ is isomorphic to  a rather ``small'' 
module
$A/\M_k(\isoto \kk)$.

\end{example}

\section{Auslander regularity}
In this section we summarize some results concerning Auslander regularity of
non commutative rings. It may be helpful to read
 an article\cite{BE1} for an introduction to the topic.

Let us begin by giving the definition of Auslander regularity.
\begin{definition}
Let $A$ be a ring Then the grade $j(M)$ of a left $A$-module $M$ is defined by
$$
j(M)=\inf\{ i| \Ext^i_A (M,A)\neq 0\}.
$$
(By convention we define $\inf\emptyset=+\infty$.)
\end{definition}

\begin{definition}
Let $A$ be a ring. Then:
\begin{enumerate}
\item  We say that $A$ {\it satisfies the Auslander condition}
 if for every Noetherian left
$A$-module $M$ and for all $i \geq 0$, $j(N)\geq i$ for all right -$A$ submodules
$N \subset \Ext^i_A(M,A)$.
\item  $A$ is said to be  {\it Auslander-Gorenstein}
 if $A$ is left Noetherian,
satisfies the Auslander condition, and has finite left injective
dimension;
\item  $A$ is said to be {\it Auslander regular}
 if it is Auslander-Gorenstein and has finite global
dimension.
\end{enumerate}
\end{definition}
 A commutative ring is Auslander regular 
if and only if it
is a regular ring in the usual sense of commutative ring theory.
It is also easy to see that the following lemma holds.
The author could not find a right reference.
For the sake of self-containedness, let us give here a proof.
\begin{lemma}
Auslander regularity is Morita invariant.
\end{lemma}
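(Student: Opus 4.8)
The plan is to exploit that Auslander regularity is assembled entirely from data that an equivalence of module categories preserves: left Noetherianity, the lattice of submodules, projective and injective dimensions, and the $\Ext$-groups entering the grade $j(-)$. First I would fix a Morita equivalence of $A$ and $B$, presented by bimodules ${}_AP_B$ and ${}_BQ_A$ with $P\otimes_BQ\isoto A$ and $Q\otimes_AP\isoto B$, so that $F=Q\otimes_A-$ and $G=P\otimes_B-$ are mutually inverse equivalences between the categories of left $A$-modules and left $B$-modules, while $-\otimes_AP$ and $-\otimes_BQ$ are mutually inverse equivalences of the corresponding right-module categories. I will freely use the standard facts that each of $P,Q$ is finitely generated projective on each side --- e.g.\ ${}_AP=G(B)$ is f.g.\ projective because the equivalence $G$ carries the f.g.\ projective object $B$ to one --- and that moreover each is a progenerator on each side, so that $B$ is a direct summand of some $Q^{\oplus m}$ while $Q$ is a direct summand of some $B^{\oplus n}$, and likewise for $P$. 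Since an equivalence of abelian categories preserves finitely generated objects, Noetherian objects and subobject inclusions, and induces natural isomorphisms $\Ext^i_A(M,N)\isoto\Ext^i_B(FM,FN)$, it is immediate that ``left Noetherian'' and ``finite global dimension'' transfer between $A$ and $B$.

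The delicate point is that the grade, the Auslander condition and finite injective dimension are all phrased with the ring itself as the dualizing object, whereas $F(A)=Q$ is only a progenerator over $B$. Here I would insert the elementary remark: if $X$ is a progenerator in $B$-modules, then for every $B$-module $L$ and every $i$ one has $\Ext^i_B(L,X)=0\iff\Ext^i_B(L,B)=0$ --- the inclusion $X\mid B^{\oplus m}$ giving ``$\Leftarrow$'' and $B\mid X^{\oplus n}$ giving ``$\Rightarrow$'' --- and the same summand argument shows $X$ and $B$ have the same injective dimension. Applied with $X=Q$ as a left $B$-module, together with $\Ext^i_A(M,A)\isoto\Ext^i_B(FM,Q)$, this yields $j_A(M)=j_B(FM)$ for every left $A$-module $M$; applied with $X=P$ as a right $B$-module it yields the analogous statement on the right; and it shows that $A$ has finite left injective dimension iff $B$ does.

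It remains to transport the right-module data in the Auslander condition. For a left $A$-module $M$ with a finitely generated projective resolution $L_\bullet\to M$ there is a natural isomorphism of complexes of right modules $\Hom_A(L_\bullet,A)\otimes_AP\isoto\Hom_A(L_\bullet,P)$, valid because each $L_j$ is f.g.\ projective; and since ${}_AP$ is f.g.\ projective the functor $-\otimes_AP$ is exact, so passing to cohomology gives $\Ext^i_A(M,A)\otimes_AP\isoto\Ext^i_A(M,P)\isoto\Ext^i_B(FM,B)$, the last isomorphism using $P\isoto GB$, $GFM\isoto M$ and that $G$ is an equivalence. Thus, under the right-module equivalence $-\otimes_AP$, the right $A$-module $\Ext^i_A(M,A)$ corresponds to $\Ext^i_B(FM,B)$, so right $A$-submodules $N\subseteq\Ext^i_A(M,A)$ correspond bijectively to right $B$-submodules of $\Ext^i_B(FM,B)$, with $j(N)=j(N\otimes_AP)$ by the previous paragraph. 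Since $M\mapsto FM$ is a bijection on isomorphism classes of Noetherian left modules, the Auslander condition for $A$ becomes literally the Auslander condition for $B$; combined with the invariances already noted, $A$ is Auslander--Gorenstein, resp.\ Auslander regular, if and only if $B$ is.

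The one genuinely nonvacuous step is the bookkeeping: checking that the natural isomorphisms $\Ext^i_A(M,A)\isoto\Ext^i_B(FM,Q)$ and $\Ext^i_A(M,A)\otimes_AP\isoto\Ext^i_B(FM,B)$ are compatible with the relevant one-sided module structures, so that the asserted bijections of submodule lattices are legitimate. Everything else is routine Morita theory.
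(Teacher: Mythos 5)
Your argument is correct and follows essentially the same route as the paper's proof: fix Morita bimodules $P$ and $Q$, transfer $\Ext^i_A(M,A)$ to $\Ext^i_B(Q\otimes_A M,B)$ and submodule inclusions $N\subset\Ext^i_A(M,A)$ to $N\otimes_A P\subset\Ext^i_B(Q\otimes_A M,B)$ via the left- and right-module equivalences, and pull the vanishing back. Your version is in fact somewhat more complete, since the progenerator summand remark cleanly handles the grade identification, the finite left injective dimension, and the Noetherian hypothesis, points the paper's proof passes over quickly.
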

\begin{proof}
Assume that 
$A$ is Morita equivalent to $B$  and 
that 
$B$ is Auslander regular.
There exists a (left $A$-, right -$B$) module $P$  
and a (left $B$-, right -$A$) module $Q$  
such that they give an 
equivalence of categories
\begin{equation*}
\xymatrix
{
(\text{left }A\text{ modules})
\ar@/^/[r]^{Q\otimes_ A \bullet} & \ar@/^/[l]^{P\otimes_B \bullet}
(\text{left }B\text{ modules})
}
\tag {\textcircled{L}}
\end{equation*}
and another equivalence of categories
\begin{equation*}
\xymatrix
{
(\text{right }A\text{ modules})
\ar@/^/[r]^{\bullet \otimes_ A P} & \ar@/^/[l]^{\bullet \otimes_B Q}
(\text{right }B\text{ modules})
}
.
\tag {\textcircled{R}}
\end{equation*}

Let $M$ be a finitely generated left $A$-module. 
Let $N$ be an $A$-right submodule of $ \Ext_A^i(M,A)$ for some integer $i$.
The equivalence \textcircled{L} of categories gives rise to
an isomorphism
$$
\Ext_A^i(M,A)\isoto 
\Ext_B^i(Q\otimes_A M,Q)
$$
of  extension groups
for each $i$.
These extension groups are  right -$A$  modules and the isomorphism is
a isomorphism of right -$A$-modules.
The right hand side corresponds, via the equivalence \textcircled{R} to a
right -$B$ module
$$
\Ext^i_A(Q\otimes_A M,Q)\otimes_A P
\isoto
\Ext_B^i(Q\otimes_A M , Q\otimes_A P).
$$
Note that $Q\otimes_A P \isoto B$.
So the inclusion $N\subset \Ext^i_A(M,A)$ corresponds via \textcircled{R}
to an inclusion
$$
N\otimes_A P\subset \Ext_B^i(Q\otimes_A M,B).
$$
The Auslander condition for $B$ implies
$$
\Ext_B^j(N\otimes_A P, B) =0 \quad (\forall j \leq i).
$$
Now by using the same  idea as above and we obtain
\begin{align*}
&Q\otimes_A \Ext^j_A(N,A) \\
\isoto
&Q\otimes_A\Ext_B^j(N\otimes_A P,P) \\
\isoto 
&\Ext^j_B(N\otimes_ A P,Q\otimes _A P)\\
&=0 \quad \text{(if $i \geq j$.)} 
\end{align*}
An $A$-module $\Ext^j_A(N,A)$ corresponds
to $Q\otimes_A \Ext^j_A(N,A)$ 
and is therefore equal to zero if $j\leq i$.
Thus $A$ satisfies the Auslander condition.

That $A$ has a finite projective dimension is obvious.
 
\end{proof}

\begin{corollary}\label{corollary:auslanderregular}
 A full matrix algebra $M_s(\kk[t_1,\dots t_n])$ over a
polynomial algebra over a field $\kk$ is Auslander regular.
\end{corollary}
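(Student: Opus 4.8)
The plan is to obtain this as a one-line combination of the two preceding facts: that a commutative ring is Auslander regular precisely when it is regular in the sense of commutative algebra, and that Auslander regularity is Morita invariant.

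First I would observe that the polynomial algebra $R=\kk[t_1,\dots,t_n]$ is a regular Noetherian commutative ring: it is Noetherian by the Hilbert basis theorem, and every localization $R_\pe$ at a prime $\pe$ is a regular local ring (equivalently, $R$ has finite global dimension $n$ by Hilbert's syzygy theorem). Hence, by the remark recorded above that commutative Auslander regularity coincides with ordinary regularity, $R$ is Auslander regular.

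Next I would recall the standard Morita equivalence between $R$ and $M_s(R)$. Take $P=R^s$ regarded as (left $R$-, right $M_s(R)$-) module of row vectors and $Q=R^s$ as (left $M_s(R)$-, right $R$-) module of column vectors; then $Q\otimes_R P\isoto M_s(R)$ and $P\otimes_{M_s(R)}Q\isoto R$, so that $R$ and $M_s(R)$ are Morita equivalent. Applying the preceding lemma (Morita invariance of Auslander regularity) with $B=R$ and $A=M_s(R)$, and using that $R$ has just been shown to be Auslander regular, we conclude that $M_s(\kk[t_1,\dots,t_n])$ is Auslander regular.

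There is essentially no obstacle here: the content is entirely carried by the two cited results, and the only points requiring any care are the routine verification that $\kk[t_1,\dots,t_n]$ is genuinely a regular ring (so that the commutative criterion applies) and that the bimodules $P$, $Q$ above really implement a Morita equivalence with $Q\otimes_R P\isoto M_s(R)$ and $P\otimes_{M_s(R)}Q\isoto R$. Both are classical.
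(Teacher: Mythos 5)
Your proposal is correct and follows exactly the route the paper intends: the corollary is stated immediately after the remark that commutative Auslander regularity coincides with ordinary regularity and after the lemma on Morita invariance, and it is deduced from those two facts via the standard Morita equivalence between $\kk[t_1,\dots,t_n]$ and $M_s(\kk[t_1,\dots,t_n])$. Your explicit choice of the row/column bimodules $P$ and $Q$ merely fills in the routine details the paper leaves unsaid.
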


The Auslander regularity might be 
a little difficult to grasp at a first glance.
Fortunately, there is a good criterion for the Auslander regularity
due to Bj\"ork. 
Namely, if $A$ has a ``good'' filtration such that 
the associated algebra $\gr A$ is Auslander regular, then $A$ itself
is Auslander regular. We may thus reduce the problem of judging 
the Auslander regularity of $A$ to the same problem for $\gr A$.
Let us now begin by defining what ``good'' filtrations are.
\begin{definition}
By a {\it filtration} of a ring $A$ we mean an increasing sequence
$\{A_i\}$ of additive subgroup of $A$ such that
$$
A=\bigcup_{i=0}^\infty A_i, \quad
 \bigcap_{i=0}^\infty A_i=0, \quad 
A_n A_m \subset A_{n+m} \quad(\forall n,m), \quad
 1\in A_0 
$$
holds.
We say that a {\it strong closure condition} is satisfied if
for any finite elements $a_1,a_2,\dots,a_s$ of $A$ and for any
integers $i_1,i_2,\dots,i_s$, submodules
$$
A_{i_1} a_{i_1} + \dots A_{i_s} a_{i_s}
$$
$$
a_{i_1}A_{i_1}  + \dots + a_{i_s} A_{i_s}
$$
are closed in the filtrated topology.
\end{definition}
It might be helpful to note that the associated graded ring $\gr A$ is 
denoted by $G A$ in several papers like \cite{Bjork1} ,\cite {BE1}.
There are several other ``closure conditions''. These conditions are
verified easily for the rings of our concern:
\begin{lemma} \label{lemma:closure}
Let $(A,\m)$ be a 
quasi local ring finite over its central Noetherian subalgebra $R$,
If $A$ is filtrated such that its topology coincides with 
a $\n$-adic topology for some ideal $\n \subset \m$, 
then $A$ with the $\m$-adic filtration satisfies the strong closure condition.
\end{lemma}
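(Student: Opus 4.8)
The plan is to reduce the strong closure condition for the $\m$-adic filtration to the single assertion that every finitely generated one-sided ideal of $A$ is $\m$-adically closed, and then to obtain that assertion from the Krull intersection theorem over the commutative Noetherian local ring $(R,\m_R)$.

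First I would record the structural input. Since $R$ is Noetherian and $A$ is finite over $R$, the ring $A$ is Noetherian as an $R$-module and hence both left and right Noetherian; and by Lemma~\ref{lemma:central} the pair $(R,\m_R)$, where $\m_R=\m\cap R$, is a Noetherian local ring. In the $\m$-adic filtration every $A_i$ is either $A$ itself or a power $\m^j$, and $\m^j$ is a finitely generated left and right ideal; hence each of the two kinds of submodule of $A$ occurring in the strong closure condition is of the form $\sum_j A_{i_j}b_j$ (resp.\ $\sum_j b_jA_{i_j}$), and is therefore a finitely generated left (resp.\ right) ideal $I$ of $A$. So it suffices to show that such an $I$ is closed in the $\m$-adic topology, which for a submodule means $\bigcap_k(I+\m^k)=I$, i.e.\ $\bigcap_k\m^k(A/I)=0$ inside the finitely generated module $A/I$.

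The heart of the matter is the comparison of topologies, and this is where ``quasi local'' is used. By Lemma~\ref{property:NAK} there is a $k_0$ with $\m^{k_0}\subset\m_RA$; together with the trivial inclusions $\m_R^{\,j}A\subset\m^j$ (as $\m_R\subset\m$) this gives $\m^{k_0 j}(A/I)\subset\m_R^{\,j}(A/I)\subset\m^j(A/I)$ for all $j$, so the $\m$-adic and $\m_R$-adic filtrations of the finite $R$-module $A/I$ are mutually cofinal and $\bigcap_k\m^k(A/I)=\bigcap_j\m_R^{\,j}(A/I)$. Since $A/I$ is finitely generated over the Noetherian local ring $(R,\m_R)$, the Krull intersection theorem (Artin--Rees) forces this intersection to vanish. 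Hence $I$ is $\m$-adically closed; applying this to both the left and the right ideals above yields the strong closure condition. Taking $I=0$ in the same argument also shows $\bigcap_k\m^k=0$, so the $\m$-adic filtration genuinely is a filtration in the sense of the definition; the assumption that the given filtration is $\n$-adic with $\n\subset\m$ serves only to place the lemma in the adic setting.

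The main obstacle is exactly this topology comparison: without quasi-locality the $\m$-adic topology need not agree with the $\m_R$-adic one — the universal enveloping algebra $U(\gee)$ of a simple Lie algebra in positive characteristic, with $\m^2=\m$, shows that the conclusion then fails outright — so the reduction to the commutative Krull intersection theorem is available only under the stated hypothesis. Everything else — Noetherianity, finite generation of the relevant ideals, and the elementary description of $\m$-adic closure — is routine.
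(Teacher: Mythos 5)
Your proof is correct and follows essentially the same route as the paper: both arguments use the quasi-local hypothesis (via Lemma~\ref{property:NAK}) to identify the adic topology on $A$ with the one induced from the commutative Noetherian local ring $(R,\m_R)$, and then invoke the Krull intersection theorem for finite $R$-modules to get closedness. Your write-up merely makes explicit the reduction of the strong closure condition to the $\m$-adic closedness of the relevant one-sided ideals, and works directly with $\m$ and $\m_R$ where the paper phrases the comparison through $\n$ and $\n\cap R$; this is a presentational difference, not a different argument.
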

\begin{proof}
By imitating the proof of Lemma \ref{property:NAK}, we see that 
there exists a positive integer $k_0$ such that 
$\n^{k_0}\subset \n \cap R$ and that $\n$-adic topology coincides with
$\n\cap R$-adic topology. The lemma then follows from  the Krull intersection 
theorem for commutative rings.
\end{proof}

We now cite the following strong result of Bj\"ork.
\begin{theorem}[Bj\"ork]\cite{Bjork1}\label{theorem:Bjork}
Let $A$ be a filtered ring such that ``closure conditions" hold.
If the associated graded algebra $\gr(A)$ is Auslander regular, 
then the original ring $A$ is also Auslander regular.
\end{theorem}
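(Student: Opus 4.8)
The plan is to transfer Auslander regularity from $\gr(A)$ to $A$ along the associated graded construction, in the spirit of Björk's proof; the closure conditions (verified for the rings of interest in Lemma~\ref{lemma:closure}) are exactly what is needed to keep the filtrations on syzygies, $\Hom$-complexes and $\Ext$-groups Hausdorff, exhaustive and complete, so that the comparison spectral sequences converge. The basic objects are \emph{good} filtered modules, i.e. finitely generated $A$-modules carrying a filtration whose associated graded is finitely generated over $\gr(A)$; these are the test objects for all the homological invariants in play.

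First I would build, for a good filtered left $A$-module $M$, a strict filtered free resolution $F_\bullet \to M$, meaning that $\gr(F_\bullet) \to \gr(M)$ is a free resolution of $\gr(M)$ over $\gr(A)$: lift a finite homogeneous generating set of $\gr(M)$ to $M$, obtain a strict surjection $F_0 \twoheadrightarrow M$ from a filtered free module, and iterate on the kernel, which is again good because $\gr(A)$ is Noetherian. Applying $\Hom_A(-,A)$ produces a complex of filtered right $A$-modules whose associated graded is $\Hom_{\gr A}(\gr F_\bullet, \gr A)$. The spectral sequence of this filtered complex converges by the closure conditions and yields, for each $i$, an embedding $\gr \Ext^i_A(M,A) \hookrightarrow \Ext^i_{\gr A}(\gr M, \gr A)$ (for a suitable good filtration on $\Ext^i_A(M,A)$), and in particular the implication $\Ext^i_{\gr A}(\gr M, \gr A)=0 \implies \Ext^i_A(M,A)=0$. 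Hence $j_A(M) \geq j_{\gr A}(\gr M)$, and the same inequality applies to every good filtered module.

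Next I would dispatch the three finiteness conditions. That $A$ is left Noetherian is the classical consequence of $\gr(A)$ being left Noetherian. If $\operatorname{gl.dim}\gr(A)=d$, then in the strict resolution above the $d$-th syzygy $\Omega$ satisfies $\gr(\Omega)$ projective over $\gr(A)$, and the lemma that a good filtered $A$-module with $\gr$-projective associated graded is $A$-projective (valid under the closure conditions) gives $\pd_A M \leq d$; thus $\operatorname{gl.dim}A \leq d$. For the injective dimension, one compares $\Ext^i_A(A/I,A)$ with $\Ext^i_{\gr A}(\gr(A/I),\gr A)$ as above and uses $\operatorname{inj.dim}_{\gr A}\gr(A)<\infty$ together with the Noetherian hypothesis to conclude $\operatorname{inj.dim}_A A<\infty$. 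Finally, for the Auslander condition, take a good filtered $M$, an index $i$, and a right submodule $N \subseteq \Ext^i_A(M,A)$; equip $N$ with the induced filtration, which is again good, so that $\gr(N) \subseteq \gr\Ext^i_A(M,A) \subseteq \Ext^i_{\gr A}(\gr M,\gr A)$; the Auslander condition for $\gr(A)$ gives $j_{\gr A}(\gr N)\geq i$, and the grade inequality from the previous paragraph gives $j_A(N) \geq j_{\gr A}(\gr N)\geq i$.

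I expect the main obstacle to be the careful handling of filtrations needed to make the grade comparison $j_A(-)\geq j_{\gr A}(\gr(-))$ literally correct: one must show that the $\Ext$-groups and all the relevant submodules carry good filtrations, that these are complete and separated, and hence that the filtered-complex spectral sequence converges to the true cohomology and produces an honest inclusion $\gr\Ext^i_A(M,A)\hookrightarrow\Ext^i_{\gr A}(\gr M,\gr A)$ rather than merely a subquotient relation; this is precisely where the closure conditions are used in an essential way. The auxiliary lemma ``$\gr$-projective implies projective'' for good filtered modules, needed for the global-dimension bound, is the other point that requires the full strength of those conditions.
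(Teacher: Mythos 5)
The paper does not prove this statement at all: Theorem~\ref{theorem:Bjork} is quoted from Bj\"ork \cite{Bjork1} and used as a black box (the only work the paper does nearby is Lemma~\ref{lemma:closure}, verifying the closure conditions for the rings it cares about). So there is no in-paper proof to compare with; what you have written is a reconstruction of the standard argument of Bj\"ork himself (and of the Zariskian-filtration literature): good filtrations, strict filtered free resolutions, the spectral sequence of the filtered complex $\Hom_A(F_\bullet,A)$, the grade comparison $j_A(M)\geq j_{\gr A}(\gr M)$, and the three transfers (Noetherianity, finite global dimension via ``$\gr$-projective implies projective'', finite injective dimension via Baer-type detection on cyclic modules, and the Auslander condition via induced filtrations on submodules of $\Ext$). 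As a sketch this is the right route. One technical caveat: even under the closure (Zariskian) hypotheses the spectral sequence does not give an honest inclusion $\gr\Ext^i_A(M,A)\hookrightarrow \Ext^i_{\gr A}(\gr M,\gr A)$, only that $\gr\Ext^i_A(M,A)$ is a subquotient of $\Ext^i_{\gr A}(\gr M,\gr A)$; the standard repair is not to upgrade this to an inclusion but to combine the subquotient statement with the fact that over an Auslander--Gorenstein ring the grade satisfies $j(N)=\min\{j(N'),j(N'')\}$ for every short exact sequence $0\to N'\to N\to N''\to 0$, so that grades do not drop when passing to subquotients; with that lemma inserted, your argument for the vanishing implication, for the grade inequality, and for the Auslander condition goes through as stated.
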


By using Lemma\ref{lemma:closure}, 
we obtain immediately the following corollary.

\begin{corollary}
Let $(A,\m)$ be a Noetherian quasi local ring finite over its center.
Then $A$ is Auslander regular if 
the associated graded ring $\gr_\m A$ is Auslander
regular.

\end{corollary}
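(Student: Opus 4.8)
The plan is to deduce the statement from Bj\"ork's theorem (Theorem~\ref{theorem:Bjork}) applied to the $\m$-adic filtration on $A$, the required ``closure conditions'' being supplied by Lemma~\ref{lemma:closure}.

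First I would put on $A$ the $\m$-adic filtration, that is, the $\Z$-indexed increasing filtration with $A_{-n}=\m^n$ for $n\ge 0$ and $A_n=A$ for $n\ge 0$; its associated graded ring is, up to the usual reversal of the grading, exactly $\gr_\m A$, which is Auslander regular by hypothesis. To see that this is a filtration in the sense of the paper one needs $\bigcap_n\m^n=0$. Here I would fix a Noetherian central subalgebra $R\subset A$ over which $A$ is module-finite (the full center works, since a Noetherian ring that is module-finite over a central subring forces that subring to be Noetherian, by the Eakin--Nagata--Formanek theorem), set $\m_R=\m\cap R$, invoke Lemma~\ref{lemma:central} to know that $(R,\m_R)$ is local, and invoke Lemma~\ref{property:NAK} to know that the $\m$-adic topology on $A$ coincides with the $\m_R A$-adic one. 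Krull's intersection theorem applied to the finite $R$-module $A$ then gives $\bigcap_n\m_R^n A=0$, whence $\bigcap_n\m^n=0$.

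Next I would verify the closure conditions. Taking $\n=\m$ (so that $\n\subset\m$ holds trivially), Lemma~\ref{lemma:closure} applies directly: the quasi local ring $A$, finite over the Noetherian central subalgebra $R$ and filtered by the $\m$-adic filtration, satisfies the strong closure condition; and, as remarked in the paper after the definition of that condition, the weaker ``closure conditions'' entering the hypothesis of Theorem~\ref{theorem:Bjork} are consequences of it. Theorem~\ref{theorem:Bjork} then yields that $A$ is Auslander regular, which is the assertion.

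The mathematical content is essentially that of Lemma~\ref{lemma:closure} together with Bj\"ork's theorem, so the only genuine care needed is bookkeeping: matching the paper's notion of filtration (with its $0$-to-$\infty$ indexing) to the \emph{decreasing} $\m$-adic filtration by placing $\m^n$ in degree $-n$, and confirming that every variant of ``closure condition'' used in Theorem~\ref{theorem:Bjork} really does follow from the strong closure condition of Lemma~\ref{lemma:closure}. Granting these points, and the existence of a Noetherian central subalgebra over which $A$ is finite, the corollary is immediate.
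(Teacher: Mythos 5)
Your proposal is correct and follows essentially the same route as the paper: the paper likewise obtains the corollary immediately by combining Lemma~\ref{lemma:closure} (applied with $\n=\m$) with Bj\"ork's Theorem~\ref{theorem:Bjork}. Your extra care about the separatedness $\bigcap_n\m^n=0$, the Noetherianity of the central subalgebra, and the indexing of the filtration only fills in bookkeeping the paper leaves implicit.
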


The Auslander regularity of a ring guarantees us several good properties 
of the ring.
The following theorem of Ramras is an example. 
It  plays an important role in our argument.

\begin{theorem}\cite[Theorem2.16]{Ramras1}\label{theorem:Ramras}
Let $A$ be a quasi local ring finite over a central regular local ring
$R$ of dimension $n$. 
If the left injective dimension of $A$ over $A$ is finite, then $A$ is 
$R$-free.

\end{theorem}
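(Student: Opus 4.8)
The plan is to show that $A$ is a maximal Cohen--Macaulay module over $R$. Since $R$ is regular local of dimension $n$, the module $A$ satisfies $\pd_RA\le\operatorname{gl.dim}R=n$, so the Auslander--Buchsbaum formula $\pd_RA+\depth_RA=n$ forces $A$ to be $R$-free as soon as $\depth_RA=n$. Because the elements of $R$ are central, a sequence in $R$ is regular on the left module ${}_AA$ iff it is $R$-regular on $A$; so it suffices to produce an $A$-regular sequence of length $n$ inside $\m_R:=\m\cap R$, which is the maximal ideal of $R$ by Lemma \ref{lemma:central}. Equivalently, it suffices to prove the theorem's conclusion after passing to $R/tR$ for a suitable regular element $t$, and then to lift.

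I would argue by induction on $n=\dim R$. If $n=0$ then $R$ is a field and $A$ is trivially $R$-free, so assume $n\ge1$. Granting for the moment the key claim that $\depth_RA\ge1$: then $\m_R$ is not an associated prime of the $R$-module $A$, while $\m_R\not\subset\m_R^2$; as there are only finitely many associated primes, prime avoidance (allowing the single non-prime ideal $\m_R^2$) yields $t\in\m_R\setminus\m_R^2$ that is $A$-regular. Then $R':=R/tR$ is regular local of dimension $n-1$, and $A':=A/tA$ is finite over $R'$ and quasi local, with maximal ideal $\m/tA$ and residue ring $A/\m$ (Lemma \ref{property:NAK}, since $\m^{k_0}\subset\m_RA$ passes to $A'$). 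Since $t$ is a central $A$-regular element of $\rad(A)=\m$, Rees's change-of-rings theorem for injective dimension gives $\operatorname{injdim}_{A'}A'=\operatorname{injdim}_AA-1<\infty$, so the inductive hypothesis applies to $(A',R')$ and shows $A'$ is $R'$-free. Finally, lifting an $R'$-basis of $A'$ to elements of $A$ produces a generating set of $A$ by Nakayama; writing $0\to K\to R^k\to A\to 0$, tensoring with $R/tR$ and using $\Tor_1^R(A,R/tR)=(0:_At)=0$ gives $K/tK=0$, hence $K=0$ by Nakayama, so $A$ is $R$-free.

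The one non-formal ingredient, and the step I expect to be the main obstacle, is the claim that $\depth_RA\ge1$ whenever $\dim R\ge1$ and $\operatorname{injdim}_AA<\infty$; equivalently $\operatorname{soc}_A({}_AA)=0$, i.e.\ $\Hom_A(A/\m,A)=0$. This is a noncommutative form of Bass's theorem (its content being $\operatorname{injdim}_AA<\infty\Rightarrow\operatorname{injdim}_AA=\mathrm{grade}_A(A/\m)=\depth_RA$): one natural route is the Grothendieck spectral sequence
$$
E_2^{p,q}=\Ext^p_A\bigl(A/\m,\Ext^q_R(A,R)\bigr)\ \Longrightarrow\ \Ext^{p+q}_R(A/\m,R),
$$
whose abutment is concentrated in total degree $n$ (because $A/\m$ is a finite-dimensional vector space over the field $R/\m_R$ and $R$ is Gorenstein of dimension $n$) and whose rows vanish above $q=\pd_RA=n-\depth_RA$. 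If $\depth_RA=0$, then the bottom row $\Ext^\bullet_A(A/\m,\Hom_R(A,R))$ and the top row $\Ext^\bullet_A(A/\m,\Ext^n_R(A,R))$ (the latter a nonzero finite-length $A$-module, hence with nonzero $A$-socle) must both feed the abutment; exploiting $\operatorname{injdim}_AA<\infty$ to rule out the resulting incompatibility of degrees when $n\ge1$ is where the real work lies. All the remaining steps above are formal, given this fact and the standard change-of-rings and lifting results.
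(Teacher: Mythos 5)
There is a genuine gap, and you identify it yourself. The surrounding skeleton is fine: Auslander--Buchsbaum over the regular local ring $R$, prime avoidance to find $t\in\m_R\setminus\m_R^2$ regular on $A$, passage to $R'=R/tR$, $A'=A/tA$ (which stays quasi local since $tA\subset\m_R A\subset\m$), the Rees-type change-of-rings theorem for a central regular element in $\rad(A)$, and the Nakayama/Tor argument to lift an $R'$-basis. But all of this only \emph{reduces} the theorem to the claim $\depth_RA\geq 1$ whenever $\dim R\geq 1$ and $\operatorname{injdim}_AA<\infty$, and that claim is precisely the substance of the theorem (a noncommutative Bass-type statement that finite self-injective dimension forces $A$ to be maximal Cohen--Macaulay over $R$). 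You do not prove it; you sketch a spectral-sequence strategy and explicitly say that ruling out the ``incompatibility of degrees'' is ``where the real work lies.'' So as it stands the proposal is a reduction plus an unproved key lemma, not a proof. Note also that the paper itself offers no argument to compare against: it quotes this as Ramras's Theorem 2.16, whose proof rests on his own development of grade/Ext-dimension and depth for quasi local orders, not on the route you sketch.

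Moreover, the sketched spectral sequence is unlikely to close the gap in the form you state it. Its rows are $\Ext^p_A\bigl(A/\m,\Ext^q_R(A,R)\bigr)$, so the $A$-module entering the bottom row is the dual bimodule $\Hom_R(A,R)$, not $A$ itself; the hypothesis $\operatorname{injdim}_AA<\infty$ gives no direct control over $\Ext^\bullet_A(A/\m,\Hom_R(A,R))$ unless you already know that $\Hom_R(A,R)$ is closely related to $A$ (e.g.\ that $A$ is Cohen--Macaulay or Gorenstein over $R$), which is essentially what you are trying to prove. The honest formulation of the missing step is: if $\depth_RA=0$ then $A$ contains a simple left $A$-submodule, i.e.\ $\Hom_A(A/\m,A)\neq 0$, and one must show this contradicts $\dim R\geq 1$ together with finite self-injective dimension; establishing that incompatibility (via Bass numbers, grade arguments, or Ramras's Ext-dimension machinery) is the nonformal core that your proposal leaves open.
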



%
%
The following fact seems to be well-known to the specialists.
\begin{proposition}
Let $A$ be a non commutative Noetherian ring. 
Then $A$ is an Auslander regular ring 
if and only if the following conditions are fulfilled.
\begin{enumerate}
\item The global dimension of $A$ is finite.
\item Let
$$
0 \to A \to E_0 \to E_1 \to \dots,
$$
be the minimal injective resolution 
of $A$ as an $A$-left module. 
Then the flat dimension of each $E_i$  is less than or equal to $i$.
\end{enumerate}
\end{proposition}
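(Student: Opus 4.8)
The plan is to establish the equivalence via the standard machinery relating Ext-grades, injective resolutions, and flat dimension, using the characterization of the Auslander condition in terms of the minimal injective resolution. First I would recall that for a Noetherian ring $A$, Auslander regularity means: $A$ has finite global dimension, finite injective dimension on both sides, and the Auslander condition holds. Since finite global dimension is assumed as condition (1) in both directions, the entire content is to show that, under (1), the Auslander condition together with finite injective dimension is equivalent to the flat-dimension bound (2) on the terms $E_i$ of the minimal injective resolution.

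For the direction ($\Rightarrow$): assume $A$ is Auslander regular. Then $A$ has finite injective dimension, so the minimal injective resolution $0\to A\to E_0\to E_1\to\cdots$ is finite. The key step is to identify, for each $i$, the terms $E_i$ via Ext: a standard fact (minimal injective resolutions are built from the associated primes / critical composition factors detected by $\Ext$) gives that $E_i$ is controlled by the modules $\Ext^i_A(M,A)$ as $M$ ranges over finitely generated modules; more precisely, $E_i$ appears in the minimal injective resolution only of those cyclic modules $M$ with $j(M)\le i$, and the Auslander condition forces the grade-$i$ pieces $\Ext^i_A(M,A)$ (and their submodules $N$) to satisfy $j(N)\ge i$. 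Translating this via the Ischebeck-type spectral sequence relating $\Ext^*_A(\Ext^*_A(M,A),A)$ back to $M$, one deduces that $\Tor$ against $E_i$ vanishes in degrees $>i$, i.e. $E_i$ has flat dimension $\le i$. This is essentially Bj\"ork's reformulation (see \cite{Bjork1}, \cite{BE1}); I would cite it and sketch how the Auslander condition feeds into the flat-dimension bound degree by degree.

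For the converse ($\Leftarrow$): assume (1) and (2). From (2), $E_i$ has flat dimension $\le i$; since the resolution $0\to A\to E_0\to\cdots$ has finite length (its length is bounded by the global dimension via (1), or one argues directly that a minimal injective resolution over a Noetherian ring of finite global dimension is finite), $A$ has finite injective dimension, so $A$ is Auslander-Gorenstein provided the Auslander condition holds. To get the Auslander condition: for a finitely generated $M$ and a right-submodule $N\subset\Ext^i_A(M,A)$, compute $\Ext^j_A(N,A)=\Tor$-dually against the resolution; the flat-dimension bound on $E_\ell$ ensures that the double-Ext spectral sequence $E_2^{p,q}=\Ext^p_A(\Ext^q_A(M,A),A)\Rightarrow M$ is concentrated so that $\Ext^p_A(\Ext^q_A(M,A),A)=0$ for $p<q$, and the same vanishing passes to the submodule $N$ because flat dimension bounds are inherited through the Ext computation. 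This yields $j(N)\ge i$ and hence the Auslander condition.

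The main obstacle will be the precise bookkeeping in the direction ($\Rightarrow$) connecting the minimal injective resolution to the grade filtration --- specifically, showing that minimality of the injective resolution, combined with the Auslander condition, forces the clean statement ``$\operatorname{fd}(E_i)\le i$'' rather than some weaker bound. This requires knowing that the indecomposable injectives occurring in $E_i$ are exactly the injective hulls of modules of grade $i$ (the ``Gorenstein symmetry'' of the resolution), which in the commutative case is classical (Bass) and in the noncommutative Auslander-Gorenstein setting is due to Bj\"ork and others; I would invoke this as the known input rather than reprove it. The converse is more mechanical once the spectral sequence is set up correctly, the only care being to ensure all flatness arguments are stated for left/right modules consistently.
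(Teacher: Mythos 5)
The paper itself gives no argument for this proposition; it simply refers the reader to Miyachi, so deferring to the literature is consistent with what the paper does. Your sketch, however, cannot stand as a proof, and the problem is concentrated in the converse direction. The Auslander condition quantifies over \emph{all} right submodules $N\subset\Ext^i_A(M,A)$, whereas the vanishing you extract from the double-Ext spectral sequence, namely $\Ext^p_A(\Ext^q_A(M,A),A)=0$ for $p<q$, is a statement about the module $\Ext^q_A(M,A)$ itself. Grade is not in general inherited by submodules, and the phrase ``the same vanishing passes to the submodule $N$ because flat dimension bounds are inherited through the Ext computation'' is exactly the point that needs proof, not a justification of it. The standard arguments treat $N$ as an independent finitely generated right module and apply hypothesis (2) to it directly, via the duality $\Tor^A_j(N,E)\isoto\Hom_A\bigl(\Ext^j_{A^{\mathrm{op}}}(N,A),E\bigr)$ valid for finitely generated $N$ and injective left $E$, combined with the minimality of the resolution; relating $N$ back to $\Ext^i_A(M,A)$ requires a further, genuinely nontrivial step that your outline does not supply. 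Note also that condition (2) involves only the left injective resolution, while the Auslander condition is left--right symmetric; this symmetry of the flat-dimension condition is itself part of the known theorem and is not addressed in your sketch.

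In the forward direction, the structural input you propose to invoke --- that the indecomposable injectives occurring in $E_i$ are precisely hulls of grade-$i$ modules, a noncommutative analogue of Bass's description of minimal injective resolutions --- is essentially equivalent to the statement being proved, so ``invoking it as known input'' amounts to citing the proposition itself. That is a legitimate move (it is what the paper does by pointing to Miyachi, and the equivalence is due to Bj\"ork and others), but then the surrounding spectral-sequence narrative adds nothing verifiable. In short: either cite Bj\"ork or Miyachi for the full equivalence, or reproduce the Tor--Ext duality and minimality arguments in detail; as written, the proposal has a genuine gap at the submodule step of the converse.
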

For the proof see for example \cite{Miyachi1}.
\qed

On the other hand, 
we know
 that taking injective hulls commutes with localizing modules
with respect to a multiplicative subset of $R$, 
namely, the following lemma holds.
\begin{lemma}
Let $A$ be a ring which is finite over its noetherian central subalgebra $R$.
Let $S$ be a multiplicative subset of $R$.
Then for any $A$-left module of $M$, the localization
$
E(M)[S^{-1}] 
$
of the injective hull $E(M)$ of $M$ 
is isomorphic to the injective hull of $M[S^{-1}]$ as an $A[S^{-1}]$-module.
\end{lemma}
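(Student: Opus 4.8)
The plan is to derive the statement from two facts: that the central localization carries $A$-injective modules to $A[S^{-1}]$-injective modules, and that it carries essential extensions to essential extensions. Write $A' = A[S^{-1}] = A \otimes_R R[S^{-1}]$. Since $R$ is Noetherian so is $R[S^{-1}]$, and $A'$ is finite over $R[S^{-1}]$, hence left Noetherian; moreover $A'$ is flat (on both sides) over $A$, so the functor $(-)[S^{-1}] = (-)\otimes_A A'$ is exact on $A$-modules and commutes with finite intersections of submodules. In particular the inclusion $M \hookrightarrow E(M)$ localizes to an inclusion $M[S^{-1}] \hookrightarrow E(M)[S^{-1}]$, and it suffices to prove (I) that $E(M)[S^{-1}]$ is injective over $A'$ and (II) that $M[S^{-1}]$ is essential in $E(M)[S^{-1}]$ as an $A'$-module; then $E(M)[S^{-1}]$ is, by definition, the injective hull of $M[S^{-1}]$ over $A'$.

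For (I) I would invoke Baer's criterion. As $A'$ is left Noetherian, every left ideal $\mathfrak J$ of $A'$ is finitely generated, and clearing denominators in the generators gives $\mathfrak J = I[S^{-1}]$ for a finitely generated left ideal $I$ of $A$. By adjunction and flatness $\Hom_{A'}(I[S^{-1}], E(M)[S^{-1}]) = \Hom_A(I, E(M)[S^{-1}])$, and since $I$ is finitely presented (as $A$ is Noetherian) and $E(M)[S^{-1}] = \varinjlim_{s\in S} E(M)$ is a filtered colimit of copies of $E(M)$ along multiplications by elements of $S$, this equals $\Hom_A(I, E(M))[S^{-1}]$. Hence any $A'$-linear map $I[S^{-1}] \to E(M)[S^{-1}]$ is, up to a unit $1/s$, induced by an $A$-linear map $I \to E(M)$, which extends to $A \to E(M)$ since $E(M)$ is $A$-injective; extension of scalars then produces the required $A'$-linear map $A' \to E(M)[S^{-1}]$. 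So $E(M)[S^{-1}]$ is $A'$-injective.

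For (II), which I expect to be the real obstacle, I would use the structure of injectives over a Noetherian ring. Since $A$ is left Noetherian, $E(M) = \bigoplus_j E_j$ with each $E_j$ indecomposable injective, say $E_j = E(U_j)$ for a uniform $U_j$. The point to establish is a torsion dichotomy for the summands: the unique associated prime $P_j$ of $E_j$ contracts to a prime $\mathfrak q_j = P_j \cap R$, and every element of $E_j$ is annihilated by a power of $\mathfrak q_j$ as an $R$-module — this is standard for rings module-finite over a commutative Noetherian ring, since $E_j$ is then a module over the semilocal ring $A\otimes_R R_{\mathfrak q_j}$ all of whose elements are killed by powers of its Jacobson radical. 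Consequently $E_j[S^{-1}] = 0$ if $S \cap \mathfrak q_j \neq \emptyset$, while if $S \cap \mathfrak q_j = \emptyset$ then $E_j$ is already an $R_{\mathfrak q_j}$-module, so $S$ acts invertibly and $E_j[S^{-1}] = E_j$ is $S$-torsion-free. Therefore $E(M)[S^{-1}] = \bigoplus_{j\in J_0} E_j$ over the surviving index set $J_0$, and this module, being a direct sum of $S$-torsion-free modules, is $S$-torsion-free. Now take $0 \neq z \in E(M)[S^{-1}]$; choosing a representative $z = e/1$ and deleting the components of $e$ lying in the summands with $j \notin J_0$ (which form an $S$-torsion element, hence die in $E(M)[S^{-1}]$), we may assume $0 \neq e \in \bigoplus_{j \in J_0} E_j$. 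Since $M$ is essential in $E(M)$ over $A$, we get $Ae \cap M \neq 0$; this submodule sits inside the $S$-torsion-free module $\bigoplus_{j\in J_0} E_j$, so it remains nonzero after localization, and by exactness $(Ae \cap M)[S^{-1}] = (Ae)[S^{-1}] \cap M[S^{-1}] = A'z \cap M[S^{-1}]$. Hence $A'z \cap M[S^{-1}] \neq 0$, which proves (II). Combining (I) and (II), the natural inclusion exhibits $E(M)[S^{-1}]$ as the injective hull of $M[S^{-1}]$ over $A[S^{-1}]$. The only non-formal ingredient is the torsion dichotomy for the indecomposable summands $E_j$ (equivalently, the good behaviour of the $S$-torsion submodule of an injective module over such a ring); everything else is bookkeeping with flat base change.
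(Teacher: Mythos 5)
The paper never actually proves this lemma --- it is invoked with ``we know that \dots'' and no argument is given --- so there is nothing of the author's to compare with; judged on its own terms, your proof is correct in outline and is the natural route: Baer's criterion plus finite presentation and the filtered-colimit description of $E(M)[S^{-1}]$ for injectivity, and the Matlis--Papp decomposition into indecomposable injectives plus a torsion dichotomy for essentiality. The one step you assert rather than prove is exactly the one you flag, and as phrased the justification is circular: saying $E_j$ ``is a module over $A\otimes_R R_{\mathfrak q_j}$ all of whose elements are killed by powers of its Jacobson radical'' is a restatement of the dichotomy, not a reason for it. The fact is true for rings module-finite over a central Noetherian subring, and can be closed as follows. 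First, for $s\in R\setminus\mathfrak q_j$ multiplication by $s$ is injective on $E_j$: a nonzero kernel would, by uniformity, meet the prime layer $U_j$ (every nonzero submodule of which has annihilator exactly $P_j$), forcing $s\in P_j\cap R=\mathfrak q_j$; then $sE_j$ is a nonzero injective submodule of the indecomposable injective $E_j$, hence a direct summand, hence all of $E_j$, so $E_j$ is indeed an $A\otimes_R R_{\mathfrak q_j}$-module. Second, the two-sided ideal $\mathfrak q_jA$ is generated by central elements of the Noetherian ring $A$, hence has the Artin--Rees property; therefore the $\mathfrak q_j$-power-torsion submodule of the injective module $E_j$ is again injective (stability of the associated torsion theory), so it is a direct summand; it is nonzero because the prime layer is annihilated by $\mathfrak q_j\subset P_j$, so by indecomposability it equals $E_j$. (Alternatively one can quote Gabriel's correspondence and the second-layer theory for fully bounded Noetherian rings, a class containing all algebras finite over a central Noetherian subring.) With that inserted, the rest of your argument is sound: the uniqueness of the assassinator of a uniform module over a Noetherian ring, the conclusion $E_j[S^{-1}]=0$ or $E_j[S^{-1}]=E_j$ with the surviving sum $S$-torsion-free, the identity $(Ae\cap M)[S^{-1}]=A'z\cap M[S^{-1}]$ by flatness of central localization, and the Baer-criterion computation $\Hom_A(I,E(M)[S^{-1}])\cong\Hom_A(I,E(M))[S^{-1}]$ for the finitely generated left ideal $I$ are all correct, and together they do give $E(M)[S^{-1}]=E_{A[S^{-1}]}(M[S^{-1}])$.
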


\begin{lemma}
Let $A$ be a ring which is finite over its noetherian central subalgebra $R$.
Let $S$ be a multiplicative subset of $R$.
If $A$ is Auslander regular, then $A[S^{-1}]$ is also Auslander regular.
\end{lemma}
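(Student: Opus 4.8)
The plan is to reduce to the Proposition above that characterises Auslander regularity of a Noetherian ring by two conditions: finite global dimension, and that the $i$-th term $E_i$ of the minimal injective resolution of the ring has flat dimension at most $i$. I would verify both conditions for $A[S^{-1}]$ from the fact that they hold for $A$, using the preceding Lemma on commutation of injective hulls with localization.

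First I would record the preliminaries. Since $R$ is Noetherian, $R[S^{-1}]$ is Noetherian, and $A[S^{-1}]\isoto A\otimes_R R[S^{-1}]$ is finite over $R[S^{-1}]$; hence $A[S^{-1}]$ is left and right Noetherian. Central localization is exact and sends projective (resp.\ flat) $A$-modules to projective (resp.\ flat) $A[S^{-1}]$-modules. Consequently, for any $A[S^{-1}]$-module $M$ one localizes an $A$-projective resolution of $M$ (note $M[S^{-1}]\isoto M$) to obtain $\pd_{A[S^{-1}]}M\le\pd_A M\le\operatorname{gldim}A$, so $\operatorname{gldim}A[S^{-1}]\le\operatorname{gldim}A<\infty$; and for any $A$-module $N$, localizing a flat $A$-resolution gives $\operatorname{fd}_{A[S^{-1}]}(N[S^{-1}])\le\operatorname{fd}_A N$.

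For the second condition, let $0\to A\to E_0\to E_1\to\cdots$ be the minimal injective resolution of $A$ as a left $A$-module, so that $E_i$ is the injective hull of $Z_i:=\Coker(E_{i-2}\to E_{i-1})$ with the convention $E_{-1}=A$, $E_{-2}=0$. Applying $-\otimes_R R[S^{-1}]$ yields an exact complex $0\to A[S^{-1}]\to E_0[S^{-1}]\to E_1[S^{-1}]\to\cdots$ with $Z_i[S^{-1}]\isoto\Coker(E_{i-2}[S^{-1}]\to E_{i-1}[S^{-1}])$. By the preceding Lemma, $E_i[S^{-1}]$ is the injective hull of $Z_i[S^{-1}]$ over $A[S^{-1}]$; hence this localized complex is exactly the minimal injective resolution of $A[S^{-1}]$ (if some $Z_i[S^{-1}]$ vanishes the resolution simply truncates there, which only helps). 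Since $\operatorname{fd}_A E_i\le i$ by Auslander regularity of $A$, the inequality above gives $\operatorname{fd}_{A[S^{-1}]}(E_i[S^{-1}])\le i$. Both conditions of the Proposition are then verified, so $A[S^{-1}]$ is Auslander regular.

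I expect no genuine obstacle: the behaviour of exactness, flatness and projectivity under central localization is routine, and the one slightly delicate point — identifying the localized injective resolution with the minimal injective resolution of $A[S^{-1}]$ — is precisely what the preceding Lemma delivers. Should one wish to bypass the minimal-resolution bookkeeping, an alternative is to check the Auslander condition directly using flat base change $\Ext^i_{A[S^{-1}]}(M[S^{-1}],A[S^{-1}])\isoto\Ext^i_A(M,A)[S^{-1}]$ for finitely generated $M$ over the Noetherian ring $A$, together with $j(N[S^{-1}])\ge j(N)$; but the injective-resolution route is the shortest given the lemmas already at hand.
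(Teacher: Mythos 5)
Your proof is correct and follows the route the paper intends: the lemma is stated there without an explicit proof, but the characterization of Auslander regularity via finite global dimension together with the condition $\operatorname{fd}_A E_i\le i$ on the minimal injective resolution, and the lemma that injective hulls commute with central localization, are placed immediately before it precisely so that one localizes the minimal injective resolution and bounds flat dimensions exactly as you do. Your bookkeeping (Noetherianness of $A[S^{-1}]$, exactness of central localization, identification of the localized complex with the minimal injective resolution of $A[S^{-1}]$) fills in the details the paper leaves to the reader, so there is nothing essentially different here.
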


With the above proposition and the lemma,
we may easily prove the following proposition.
\begin{proposition}
\label{proposition:AuslanderRegular}
Let $A$ be an algebra 
which is finite over its Noetherian central subalgebra $R$. 
Let us denote by $A_X$  
the sheaf of $\Oo_{X}$-algebras over $ X=\Spec(R)$ associated to $A$.
Then the following are equivalent:
\begin{enumerate} \item $A$ is an Auslander regular ring.
\item There exists an affine open covering $\{U_i\}$ of $\Spec(R)$ such that
each ring $\Gamma(U_i,  A_X)$  is Auslander regular. 
\item There exist elements  $f_1,f_2,\dots, f_t \in R$ such that their  
zero-locus $V(f_1,\dots, f_t)$ is empty and 
each localization $A[f_j^{-1}]$ is Auslander regular.
\item For any prime ideal $\pe$ of $R$,
 $A\otimes_R R_\pe$ is Auslander regular.
\suspend{enumerate}
If in addition, $R$ is a finitely generated 
commutative algebra over a field $\kk$,
the above conditions are also equivalent to:
\resume{enumerate}
\item For any maximal ideal $\m$ of $R$, 
$A\otimes_R R_\m$ is Auslander regular.
\end{enumerate}
\end{proposition}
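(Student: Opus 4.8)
The plan is to transfer the injective–resolution criterion for Auslander regularity recalled just above along the flat base changes $R\to R[f^{-1}]$ and $R\to R_\pe$, using the two lemmas just proved: that localizing an injective hull over $A$ yields the injective hull of the localized module over $A[S^{-1}]$, and that a localization of an Auslander regular ring is again Auslander regular.

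\emph{The easy direction.} First I would note that $(1)$ implies all of $(2)$–$(5)$ almost immediately: for $(1)\Rightarrow(2)$ take the trivial covering $\{\Spec(R)\}$, for $(1)\Rightarrow(3)$ take $t=1$ and $f_1=1$, and for $(1)\Rightarrow(4)$, $(1)\Rightarrow(5)$ observe that $A\otimes_R R_\pe=A[(R\setminus\pe)^{-1}]$ is a localization of $A$, hence Auslander regular by the lemma. I would then close the equivalence through $(2)\Rightarrow(3)\Rightarrow(1)$, $(3)\Rightarrow(4)\Rightarrow(5)$, and $(5)\Rightarrow(4)\Rightarrow(3)$. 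For $(2)\Rightarrow(3)$: refine $\{U_i\}$ to distinguished opens $D(f_j)$ contained in the $U_i$; since $R$ is Noetherian, $\Spec(R)$ is quasi-compact, so finitely many suffice, and each $\Gamma(D(f_j),A_X)=A[f_j^{-1}]$ is a localization of the Auslander regular ring $\Gamma(U_i,A_X)$, hence Auslander regular, while $V(f_1,\dots,f_t)=\emptyset$ because the $D(f_j)$ cover. For $(3)\Rightarrow(4)$: any prime $\pe$ lies in some $D(f_j)$ (as $V(f_1,\dots,f_t)=\emptyset$), so $A\otimes_R R_\pe$ is a localization of the Auslander regular ring $A[f_j^{-1}]$; $(4)\Rightarrow(5)$ is trivial, and for $(5)\Rightarrow(4)$ one uses that every prime $\pe$ sits inside a maximal ideal $\m$ and that $R_\pe$ is a localization of $R_\m$, so $A\otimes_R R_\pe$ is a localization of the Auslander regular $A\otimes_R R_\m$.

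\emph{The main converse $(3)\Rightarrow(1)$.} By the criterion, a Noetherian ring is Auslander regular precisely when (i) it has finite global dimension and (ii) in its minimal injective resolution $0\to A\to E_0\to E_1\to\cdots$ each $E_i$ has flat dimension at most $i$. For (ii): iterating the injective-hull lemma, localization at a prime $\pe$ carries the minimal injective resolution of $A$ to the minimal injective resolution of $A\otimes_R R_\pe$, so its $i$-th term is $E_i\otimes_R R_\pe$; since $\pe\in D(f_j)$ for some $j$, the ring $A\otimes_R R_\pe$ is a localization of the Auslander regular $A[f_j^{-1}]$, so by the criterion $E_i\otimes_R R_\pe$ has flat dimension $\le i$ over $A\otimes_R R_\pe$, i.e. $\Tor_{i+1}^{A\otimes_R R_\pe}(-,E_i\otimes_R R_\pe)=0$. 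Then, for any right $A$-module $N$, flatness of $R\to R_\pe$ gives $\Tor_{i+1}^{A}(N,E_i)\otimes_R R_\pe\isoto\Tor_{i+1}^{A\otimes_R R_\pe}(N\otimes_R R_\pe,\,E_i\otimes_R R_\pe)=0$; an $R$-module that vanishes at every prime is zero, so $\Tor_{i+1}^{A}(N,E_i)=0$ and (ii) holds for $A$. For (i): for a Noetherian ring the global dimension is the supremum of $\pd_A M$ over finitely generated $M$, and for such $M$ the groups $\Ext^k_A(M,N)$ localize, $\Ext^k_A(M,N)\otimes_R R[f_j^{-1}]\isoto\Ext^k_{A[f_j^{-1}]}(M\otimes_R R[f_j^{-1}],N\otimes_R R[f_j^{-1}])$, so $\pd_A M\le\max_j\pd_{A[f_j^{-1}]}(M\otimes_R R[f_j^{-1}])\le\max_j(\text{global dimension of }A[f_j^{-1}])<\infty$, the middle inequality because an $\Ext$-group that is killed over each of the covering sets $D(f_j)$ is zero. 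Hence $A$ has finite global dimension, and the criterion yields that $A$ is Auslander regular.

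\emph{The expected obstacle.} Conditions of type (ii) transfer from the stalks to $A$ for free, but finiteness of the global dimension of $A$ is \emph{not} detected stalk by stalk; the directions starting from $(4)$ or $(5)$ therefore hinge on $(4)\Rightarrow(3)$, that is, on replacing ``all primes'' by a finite family. The way I would do this is to show that the locus of primes $\pe$ at which the stalk of $A_X$ fails to be Auslander regular is closed in $\Spec(R)$, so that $(4)$ forces it empty and $\Spec(R)$ is covered by finitely many $D(f_j)$ with $A[f_j^{-1}]$ Auslander regular; here one genuinely uses Noetherianity of $R$ (a constructibility / generic-flatness argument for the module-finite extension $R\subset A$, together with the finite-dimensionality of $\Spec(R)$, which holds in every situation where this proposition is applied and which in the last clause is ensured by $R$ being a finitely generated $\kk$-algebra, hence Jacobson of finite Krull dimension, so that the openness argument may be run on closed points alone). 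Everything else is routine bookkeeping with the two localization lemmas and the injective-resolution criterion.
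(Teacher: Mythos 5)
Your easy implications and your proof of $(3)\Rightarrow(1)$ are sound, and the latter is actually more careful than the paper: you run the Miyachi injective-resolution criterion through the two localization lemmas, and you correctly isolate why the \emph{finite} cover in $(3)$ is what saves finiteness of the global dimension. But the proposal does not prove the proposition, because the implication $(4)\Rightarrow(3)$ (equivalently $(4)\Rightarrow(1)$, and with it $(5)\Rightarrow(1)$) is never established: you reduce it to the claim that the locus of primes $\mathfrak{p}$ where $A\otimes_R R_{\mathfrak{p}}$ fails to be Auslander regular is closed, and you justify that only by an appeal to ``constructibility / generic flatness'' together with finite Krull dimension of $\Spec(R)$ --- a hypothesis not in the statement, and an openness assertion that is itself a substantial theorem (even in the commutative case the regular locus of a Noetherian ring need not be open without excellence-type assumptions). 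Since the way the proposition is used later (Proposition \ref{proposition:main} and Proposition \ref{proposition:AuslanderRegularScheme}) is precisely the stalkwise criterion $(4)$ or $(5)$ implying $(1)$, the missing implication is the one that carries all the weight.

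The paper closes this loop differently, without any openness statement: to get $(4)\Rightarrow(1)$ it takes a finite left $A$-module $M$ and a right submodule $N\subset\Ext^i_A(M,A)$, observes that the modules $\Ext^j_A(N,A)$ are finite $R$-modules, identifies their localizations with $\Ext^j$ over $A\otimes_R R_{\mathfrak{p}}$, and concludes they vanish because a finite $R$-module vanishing at every prime is zero; the case $(5)$ is handled the same way at maximal ideals when $R$ is finitely generated over $\kk$. Your own $\Ext$/$\Tor$ flat-base-change bookkeeping is exactly the machinery needed to run this argument, so the repair is close at hand; note also that your observation that finiteness of global (and injective) dimension is not a stalk-local condition is a fair criticism of the terseness of the paper's ``we may easily deduce the result'' (uniform bounds do require the finite-dimensionality implicit in the intended setting), but the remedy is the direct localization argument on finite $\Ext$-modules together with such a bound, not the unproven closedness of the non-regular locus.
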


\begin{proof}
We have  trivial implications $(1)\implies (2)\implies (3) \implies (4) \implies (5)$.
To prove $(4)\implies (1)$, we take an arbitrary finite $A$-module $M$
and consider $\Ext^\bullet (N,A)$ for $N\subset \Ext^\bullet(M,A)$.
Since these modules are finite, we may easily deduce the result.
Similarly, when $R$ is finitely generated over a field, then we have $(5)\implies (1)$.
\end{proof}

Looking at the proposition above, we may define Auslander regularity of 
sheaf of algebras as follows:
\begin{definition}
\label{definition:AuslanderRegularScheme}
Let $X$ be a scheme. Let $\mathcal A $ be a sheaf of $\Oo_X$-algebras 
such that $\Oo_X$ is contained in the center of $\mathcal A$ and 
$\mathcal A$ is coherent as an $\Oo_X$-module. 
Then $\mathcal A$ is said to be
{\it Auslander regular} if for any point $P$ of $X$,
  the stalk $\mathcal A_P$ of $\mathcal A$ is Auslander regular.
\end{definition}
We see easily from Proposition
\ref{proposition:AuslanderRegular} the following.
\begin{proposition}
\label{proposition:AuslanderRegularScheme}
Let $\mathcal A$  be a sheaf of algebras on a scheme $X$ such that
the assumptions of Definition 
\ref{definition:AuslanderRegularScheme} are satisfied. 
Then the following conditions are equivalent:
\begin{enumerate}
\item $\mathcal A$ is Auslander regular.
\item For each affine open subset $U$ of $X$,  the algebra $\mathcal A(U)$ 
of sections on $U$ is Auslander regular.
\item There exists an affine 
open covering $\{U_\lambda\}_{\lambda \in \Lambda}$ of $X$ such that 
each $\mathcal A(U_\lambda) $ is Auslander regular.
\suspend{enumerate}
If in addition, $X$ is a scheme of finite type over a field $\kk$,
then the above conditions are also equivalent to:
\resume{enumerate}
\item For any closed point $\m$ of $R$, 
the stalk $A_\m$ is Auslander regular.
\end{enumerate}

\end{proposition}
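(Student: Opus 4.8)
The plan is to reduce every assertion to the affine case, where Proposition \ref{proposition:AuslanderRegular} applies verbatim. The one structural input is this: since $\mathcal A$ is coherent over $\Oo_X$ it is in particular quasi-coherent, so on any affine open $U=\Spec R_U$ (with $R_U=\Oo_X(U)$ Noetherian) the restriction $\mathcal A|_U$ is the quasi-coherent $\Oo_U$-algebra associated to the ring $\mathcal A(U)$, which is finite over its central Noetherian subalgebra $R_U$; and for a point $P\in U$ lying over a prime $\pe\subset R_U$ the stalk is the central localization
$$
\mathcal A_P \isoto \mathcal A(U)\otimes_{R_U}(R_U)_\pe .
$$
I would record this identification at the outset, since it is the only place where coherence of $\mathcal A$ is genuinely used.

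To prove the equivalence of (1), (2), (3): fix an affine open $U$ as above and apply the equivalence $(1)\Leftrightarrow(4)$ of Proposition \ref{proposition:AuslanderRegular} to the pair $(\mathcal A(U),R_U)$. It says that $\mathcal A(U)$ is Auslander regular if and only if $\mathcal A(U)\otimes_{R_U}(R_U)_\pe$ is Auslander regular for every prime $\pe$ of $R_U$, that is, by the stalk identification, if and only if every stalk $\mathcal A_P$ with $P\in U$ is Auslander regular. Since every point of $X$ lies in some affine open, it follows immediately that (1) is equivalent to the statement ``$\mathcal A(U)$ is Auslander regular for every affine open $U$'', which is (2). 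The implication $(2)\Rightarrow(3)$ is trivial. For $(3)\Rightarrow(1)$, a point $P\in X$ lies in some $U_\lambda$ of the given cover, and $\mathcal A_P$ is then a central localization of the Auslander regular ring $\mathcal A(U_\lambda)$, hence Auslander regular by the localization lemma above (equivalently, by $(1)\Rightarrow(4)$ of Proposition \ref{proposition:AuslanderRegular}).

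For the additional clause when $X$ is of finite type over a field $\kk$: $(1)\Rightarrow(4)$ is immediate, and for $(4)\Rightarrow(2)$ I would fix an affine open $U=\Spec R_U$, so that $R_U$ is a finitely generated $\kk$-algebra. The closed points of $U$ are exactly the points whose residue field is finite over $\kk$, hence exactly the closed points of $X$ that lie in $U$; thus (4) provides that $\mathcal A(U)\otimes_{R_U}(R_U)_\m\isoto\mathcal A_P$ is Auslander regular for every maximal ideal $\m$ of $R_U$. By the equivalence $(5)\Rightarrow(1)$ of Proposition \ref{proposition:AuslanderRegular} this forces $\mathcal A(U)$ to be Auslander regular, which gives (2) and closes the cycle.

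The whole proof is bookkeeping layered on Proposition \ref{proposition:AuslanderRegular}; the two points that need a little care are the identification of the stalk with the central localization (which is where coherence enters) and, in the finite-type case, the fact that ``closed point'' is a notion local on $X$. I do not expect either to be a genuine obstacle, so the main effort is simply in organizing the reductions cleanly.
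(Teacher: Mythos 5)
Your proof is correct and takes essentially the same route the paper intends: the paper gives no details beyond ``we see easily from Proposition \ref{proposition:AuslanderRegular}'', and your reduction---identifying each stalk with the central localization $\mathcal A(U)\otimes_{R_U}(R_U)_{\pe}$ on an affine open and then invoking the equivalences $(1)\Leftrightarrow(4)\Leftrightarrow(5)$ of Proposition \ref{proposition:AuslanderRegular}, with the Jacobson/finite-residue-field observation handling the closed-point clause---is exactly the omitted bookkeeping. The only implicit hypothesis (equally implicit in the paper) is that $X$ is locally Noetherian so that $\Oo_X(U)$ is Noetherian on affine opens, which holds in all of the paper's applications.
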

\qed


\begin{proposition}
Let $A$ be a ring which is finite over its central noetherian subring $R$.
We assume $A$ has formally completely indecomposable fiber at 
each closed point $\m \in \Spm(R)$.
Then the algebra $A$ is Auslander regular 
if and only if its completion $\hat A_{\M}$ is Auslander 
regular for any maximal ideal $\M$ of $A$.
\end{proposition}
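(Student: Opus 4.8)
The plan is to reduce the assertion to a single base-change statement and then show that Auslander regularity is preserved and reflected by the faithfully flat map $R_\m\to\hat R_\m$. By Proposition \ref{proposition:AuslanderRegular}, together with the fact that localization preserves Auslander regularity, $A$ is Auslander regular if and only if $A_\m\defeq A\otimes_R R_\m$ is Auslander regular for every closed point $\m$ of $\Spec(R)$. Fix such an $\m$, and let $\M_1,\dots,\M_s$ be the maximal ideals of $A$ lying over $\m$; by the hypothesis on the fibre of $A$ at $\m$, the natural map identifies $A\otimes_R\hat R_\m$ with the finite product $\prod_{j=1}^{s}\hat A_{\M_j}$. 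Since a module over a finite product of rings, together with all its $\Ext$ groups, decomposes into the corresponding product over the factors, the Auslander condition, finiteness of the global dimension, and finiteness of the injective dimension pass from such a product to each factor and conversely; hence $A\otimes_R\hat R_\m$ is Auslander regular if and only if every $\hat A_{\M_j}$ is. Thus it suffices to prove, for each fixed $\m$, that $A_\m$ is Auslander regular if and only if $\hat A_\m\defeq A_\m\otimes_{R_\m}\hat R_\m$ is.

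For the ``if'' direction I would argue by faithfully flat descent. The ring $\hat A_\m$ is faithfully flat over $A_\m$, and for finitely generated modules $\Ext$ commutes with this base change; since a finitely generated module vanishes precisely when its base change does, the grade of a finitely generated module is unchanged, the Auslander condition descends verbatim, and finiteness of the global and injective dimensions descends as well (test on cyclic modules, and recall that the injective dimension of the regular module is bounded above by the global dimension). Hence Auslander regularity of $\hat A_\m$ forces that of $A_\m$.

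The ``only if'' direction is the crux, since a finitely generated $\hat A_\m$-module need not be a base change from $A_\m$ and the descent argument does not simply reverse. Here I would use the characterization of Auslander regularity by minimal injective resolutions recalled above. Assuming $A_\m$ Auslander regular, take its minimal injective resolution $0\to A_\m\to E_0\to\cdots\to E_d\to 0$, where $d<\infty$ and $\operatorname{fd}(E_i)\le i$ for all $i$; applying the exact functor $-\otimes_{R_\m}\hat R_\m$ gives an exact complex $0\to\hat A_\m\to\hat E_0\to\cdots\to\hat E_d\to 0$ in which each $\hat E_i$ still has flat dimension $\le i$ over $\hat A_\m$ (flat base change cannot raise flat dimension). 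If each $\hat E_i$ is $\hat A_\m$-injective, this is an injective resolution of $\hat A_\m$ of length $\le d$, and its minimal injective resolution, being a termwise direct summand of it, satisfies the same flat-dimension bounds. Finally $\hat A_\m$ has finite global dimension: it is a Noetherian semiperfect ring (each $\hat A_{\M_j}$ is complete in the topology of its radical $\M_j\hat A_{\M_j}$, with simple Artinian residue ring), so its global dimension equals the supremum of the projective dimensions of its finitely many simple modules, each of which is the base change of a simple $A_\m$-module and hence has projective dimension at most $\operatorname{gldim}(A_\m)<\infty$. The minimal injective resolution criterion then yields that $\hat A_\m$ is Auslander regular.

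The one point still to be settled is that $-\otimes_{R_\m}\hat R_\m$ carries injective $A_\m$-modules to injective $\hat A_\m$-modules, and this is where I expect the real work to lie. I would handle it via the theory of injective modules over an algebra module-finite over a Noetherian central subring --- the same circle of ideas underlying the stated lemma that injective hulls commute with localization --- by reducing to an indecomposable injective $E$, which is ``supported'' at a single prime $\mathfrak p$ of $R_\m$, and analysing how $E\otimes_{R_\m}\hat R_\m$ breaks up according to the primes of $\hat R_\m$ lying over $\mathfrak p$, each resulting summand being injective because the relevant fibre algebra is semisimple. As an alternative route to the ``only if'' direction one may instead filter each $\hat A_{\M_j}$ by the powers of its maximal ideal: Lemma \ref{lemma:closure} makes Bj\"ork's closure conditions available, passage to the completion leaves the associated graded ring unchanged so that $\gr_{\M_j}\hat A_{\M_j}\isoto\gr_{\M_j}A_\m$, and Bj\"ork's Theorem \ref{theorem:Bjork} then reduces matters to showing that the associated graded ring of an Auslander regular algebra of the present type is again Auslander regular.
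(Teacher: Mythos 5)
Your reductions are exactly the ones the paper makes: by Proposition \ref{proposition:AuslanderRegular} (equivalently \ref{proposition:AuslanderRegularScheme}), Auslander regularity of $A$ is equivalent to that of every stalk $A_\m=A\otimes_R R_\m$, $\m\in\Spm(R)$; the decomposability hypothesis identifies $A\otimes_R\hat R_\m$ with the finite product $\prod_j\hat A_{\M_j}$, and regularity passes between a finite product and its factors; so everything hinges on the equivalence of Auslander regularity of $A_\m$ and of $A\otimes_R\hat R_\m$. The paper's own proof consists of precisely these three steps and simply asserts the middle one from flatness of $\hat R_\m$ over $R_\m$. Your faithfully flat descent argument (base change of $\Ext$ for finitely generated modules plus faithfulness) correctly settles the implication from regularity of the completion to regularity of $A_\m$, and the product bookkeeping is fine; in these respects your write-up is more detailed than the printed proof.

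The genuine gap is the ascent direction, and you flag it yourself: your route through the minimal-injective-resolution criterion needs that $-\otimes_{R_\m}\hat R_\m$ carries injective $A_\m$-modules to injective modules over $A\otimes_R\hat R_\m$, and this is nowhere established. Your sketch via indecomposable injectives supported at a prime $\pe$ runs into the structure of the formal fibre $\hat R_\m\otimes_{R_\m}\kappa(\pe)$ at non-maximal primes, which for a general noetherian $R$ (no excellence assumed in the proposition) is not harmless, so the ``semisimple fibre'' argument cannot be taken for granted. The fallback you propose does not close the hole either: Lemma \ref{lemma:closure} is stated for a quasi local ring, which $A_\m$ and $A\otimes_R\hat R_\m$ need not be (there may be several maximal ideals over $\m$, and demi localness is not guaranteed), and Theorem \ref{theorem:Bjork} only transfers regularity from the associated graded ring to the filtered ring, never conversely, so ``$A_\m$ Auslander regular $\Rightarrow$ $\gr$ Auslander regular'' is exactly what would remain to be proved. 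In fairness, the paper's proof offers nothing beyond the flatness remark at this very point, so your attempt is at least as complete as the printed argument; but as a self-contained proof of the proposition, the passage from $A_\m$ to its completion is still open in your proposal.
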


\begin{proof}

In view of the previous Proposition \ref{proposition:AuslanderRegularScheme},
we see that $A$ is Auslander regular if and only if each fiber $A_\m$ is
Auslander regular at each closed point $\m\in \Spm(R)$.
Since $\hat R_\m$ is flat over $R_\m$, the Auslander regularity of $A_\m$
is the same as that of $A\otimes_R \hat R_\m$.
By the complete decomposability, we see that 
the property is also equivalent to the
regularity of $\hat A_{\M}$ for each maximal ideal $\M$ of $A$ 
which lies over $\m$.
\end{proof}

%
%
%


\section{Regularity of norm based extensions of Weyl algebras}
In this section we describe the main result of this paper.
We first show that the norm based extension of the Weyl algebra is 
Auslander regular. We then show that the Auslander regularity distinguish 
the Weyl algebra $A$ with other algebras $B$
 which are Zariski locally isomorphic to $A$. 

\subsection{Norm based extensions of Weyl algebras}
\subsubsection{definition of Norm based extensions of Weyl algebras}
In \cite{T9}, the author defined the ``norm based extension''
of a Weyl algebra $A$. Let us make a short review.

For the rest of the paper we denote by $A$ the Weyl algebra $A=A_n(\kk)$ 
over a field $\kk$ of positive characteristic. 
It is an algebra generated by 
elements $\{\gamma_i\}_{i=1}^{2n}$  which 
satisfy the following commutation relations,
$$
[\gamma_i, \gamma_j]=h_{i j} \quad(i,j=1,2,\dots,2n)
$$
where $(h_{i j})_{i,j=1}^n$ is a skew symmetric non degenerate constant matrix.
Such generators are referred to as CCR generators.
We denote by $X=\Spec(R)$ the affine spectrum of the center $R=Z(A)$ of
$A$. $X$ carries a sheaf of $\Oo_X$-algebras corresponding to $A$, which we
denote by $A_X$.

Now, using the set $\{\gamma_i\}_{i=1}^{2n} $ of CCR generators,
we notice that the center $Z(A)$ of $A$ has ``linear coordinates''
 $\{\gamma_i^p\}_{i=1}^{2n}$
which gives rise to an identification
$$
X=\Spec Z(A)
 \isoto \A^{2n}.
$$
We may then consider a compactification of $X$ by putting 
$$
 \bar X =\mathbb P^{2n}.  
$$
It should be noted that the compactification $\bar X\supset X$ 
depends on the choice of the CCR generators of $A$.

Let us extend the sheaf $A_X$ on $X$ to $\bar X$. 
To do that, we notice that the algebra $A$ admits a norm $N_A: A\to Z(A)$ 
(or, to call it more appropriately in accordance with
 the general theory of 
Azumaya algebra, a ``reduced norm'') which is essentially given by 
considering determinants on fibers.  It gives rise to a morphism
$$
N_A: A_{X} \to \Oo_X.
$$
of sheaves on $X$. 
Noting that $A$ is ``asymptotically commutative'' near the boundary 
of $X$ and that $N_A$ 
gives the asymptotic behavior of elements of $A$, we are able
to define an extension of the sheaf $A_X$  on $X$ to the whole of
$\bar X$:
\begin{definition}\cite[Definition2.3]{T9}
The {\it norm based extension} $A_{\bar X}$ of the Weyl algebra $A$ is 
an  $\Oo_{\bar X}$-algebra on the projective space $\bar X$
whose sections on an open set $U$ of $\bar X$ are given as follows:
$$
A_{\bar X}(U)=\{s \in A_{X}(U\cap X); N_A (s)
\text{ extends to an element of }
 \Oo_X(U)\}.
$$
\end{definition}

It turns out that $A_{\bar X}$ is a sheaf of $\Oo_{\bar X}$-algebras. 

\subsubsection{Coordinates of the norm based extension at infinity}
To describe $A_{\bar X}$ at a point $\m$ on the boundary $H=\bar X \setminus X$ 
more closely,
let us describe it in terms of coordinates.
It is convenient to introduce  (non commutative) ``projective coordinates''.
Namely, we introduce  variables
$\Gamma_1,\dots, \Gamma_{2 n},Z$
such that:
\begin{enumerate}
\item  $Z$ is central.
\item  $\gamma_i=\Gamma_i/Z$ ($i=1,2,\dots {2n}$).
\item 
$
[\Gamma_i,\Gamma_j]=h_{i j}Z^2 
\qquad(i,j=1,2,\dots, 2n).
$
\end{enumerate}
By making a linear change of coordinates if necessary,
we may assume without loss of generality that 
$\Gamma_1\neq 0$ at $\m$.
We then employ a local coordinate system 
$(v,u, \{\bar\gamma_i\}_{i=3}^{2 n})$
around $\m$ defined as follows.
$$
v=- \Gamma_2   \Gamma_1^{-1} , 
\quad
u=Z \Gamma_1^{-1} ,
\quad
\bar\gamma_i= \Gamma_i \Gamma_1^{-1}  \quad (i=3,4,\dots, 2n). 
$$
We may also assume that $h_{i j}$ is in a ``normal form'' such that 
the coordinates are subject to the following commutation relations.
\begin{align*}
[v,u]=u^3&,
\quad
[u,\bar\gamma_i]=0,
\quad
[v,\bar\gamma_i]=u^2 \bar\gamma_i,
\quad
[\bar\gamma_i,\bar\gamma_j]=h_{i j} u^2\\
&(i,j=3,4,\dots,2 n).
\end{align*}

To sum up, we have proved the following:
\begin{proposition}\label{proposition:Agenerator}
Let $U$ be an affine piece of the projective space $\bar X$. 
Then the algebra of sections $A_{\bar X}(U)$ of $A_{\bar X}$ on $U$ 
is generated by $2 n$ generators.
Namely, we have $A_{\bar X}(U)=
\kk\langle u,v,\bar\gamma_3,\bar\gamma_4,\dots, \bar\gamma_{2n}
\rangle$,
where the generators are subject to the following relations.
\begin{align*}
[v,u]=u^3&,
\quad
[u,\bar\gamma_i]=0,
\quad
[v,\bar\gamma_i]=u^2 \bar\gamma_i,
\quad
[\bar\gamma_i,\bar\gamma_j]=h_{i j} u^2\\
&(i,j=3,4,\dots,2 n).
\end{align*}
Using these generators, $\Oo_{\bar X}(U)$ is expressed as a polynomial 
algebra  $\Oo_{\bar X}(U)=\kk[v^p,u^p,\bar\gamma_3^p,\bar\gamma_4^p,\dots,\bar\gamma_{2n}^p]$.
The defining equation of the hyperplane $H$ at infinity in $U$ 
is given by $u^p$.

\end{proposition}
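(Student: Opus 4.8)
The plan is to work throughout on the standard affine chart $U=\{\Gamma_1\neq 0\}$ of $\bar X=\mathbb P^{2n}$; an arbitrary affine piece is brought into this form by a linear change of the CCR generators, which affects only the compactification, not the statement. On $U\cap X$ one has $\gamma_1^p\neq 0$, so $A_X(U\cap X)=A[\gamma_1^{-1}]$, the localization of $A$ at the central element $\gamma_1^p$ (equivalently at $\gamma_1$), inside which the coordinates introduced above read $u=\gamma_1^{-1}$, $v=-\gamma_2\gamma_1^{-1}$, $\bar\gamma_i=\gamma_i\gamma_1^{-1}$. The presentation of $B:=\kk\langle u,v,\bar\gamma_3,\dots,\bar\gamma_{2n}\rangle$ is then the routine computation already carried out just before the statement: after putting $(h_{ij})$ in a Darboux form with $h_{12}=-1$ and $h_{1j}=h_{2j}=0$ for $j\geq 3$, the single identity $[\gamma_1^{-1},\gamma_j]=-\gamma_1^{-1}[\gamma_1,\gamma_j]\gamma_1^{-1}=-h_{1j}\gamma_1^{-2}$ yields the four relations $[v,u]=u^3$, $[u,\bar\gamma_i]=0$, $[v,\bar\gamma_i]=u^2\bar\gamma_i$, $[\bar\gamma_i,\bar\gamma_j]=h_{ij}u^2$ by direct substitution, so this step is immediate.

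Second, I would pin down the centre and the boundary. From $[v,u]=u^3$ one gets $[v,u^p]=\sum_{k}u^k u^3u^{p-1-k}=p\,u^{p+2}=0$, and likewise $u^p,v^p,\bar\gamma_i^p$ are all central; since moreover each generator is integral over $\Oo:=\kk[u^p,v^p,\bar\gamma_3^p,\dots,\bar\gamma_{2n}^p]$ (for instance $u$ satisfies $T^p-u^p=0$), the algebra $B$ is finite and free over $\Oo$ with ordered-monomial basis $u^av^b\bar\gamma^c$ ($0\leq a,b,c_i<p$), $u$ is a normal element with $B[u^{-1}]=A[\gamma_1^{-1}]$, and, identifying $\bar X$ with the compactification of $X=\Spec\kk[\gamma_1^p,\dots,\gamma_{2n}^p]$ having homogeneous coordinates $[Z^p:\Gamma_1^p:\cdots:\Gamma_{2n}^p]$, one reads off $\Oo_{\bar X}(U)=\Oo$ and that $H\cap U=\{Z=0\}$ is cut out by $u^p=Z^p\Gamma_1^{-p}$. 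Thus the whole content of the proposition is the identification $A_{\bar X}(U)=B$.

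For the inclusion $B\subseteq A_{\bar X}(U)$: over $Z(A)[\gamma_1^{-p}]$ the algebra $A[\gamma_1^{-1}]$ is Azumaya of rank $p^{2n}=(p^n)^2$, so $N_A(b)^{p^n}=\det(L_b)$, where $L_b$ denotes left multiplication by $b$ on $A[\gamma_1^{-1}]$ viewed as a free module over its centre. If $b\in B$ then $L_b$ preserves the $\Oo$-lattice $B$, so its determinant, computed in an $\Oo$-basis of $B$, already lies in $\Oo$; since $\Oo$ is a polynomial ring, hence integrally closed, $N_A(b)^{p^n}\in\Oo$ forces $N_A(b)\in\Oo=\Oo_{\bar X}(U)$, i.e. $b\in A_{\bar X}(U)$.

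The reverse inclusion $A_{\bar X}(U)\subseteq B$ is the step that needs real work. Suppose $s\in A[\gamma_1^{-1}]=B[u^{-1}]$ with $s\notin B$, and write $s=u^{-m}b'$ with $m\geq 1$ minimal and $b'\in B\setminus uB$. Since $N_A(u)=u^{p^n}$ and $N_A$ is multiplicative, $N_A(s)=u^{-mp^n}N_A(b')$ with $N_A(b')\in\Oo$ by the previous step, and it suffices to show $u^p\nmid N_A(b')$: then $N_A(s)$ acquires a genuine pole along $H$, contradicting $N_A(s)\in\Oo_{\bar X}(U)$. To see $u^p\nmid N_A(b')$, reduce modulo $u^p$. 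In $\bar B:=B/u^pB$ the $u$-adic filtration $\bar B\supseteq u\bar B\supseteq\cdots\supseteq u^p\bar B=0$ has commutative associated graded ring (each defining bracket of $B$ lands in strictly higher $u$-degree) and $\bar B/u\bar B\cong C_0:=\kk[v,\bar\gamma_3,\dots,\bar\gamma_{2n}]$, a polynomial ring, a domain, finite and free over $\bar\Oo:=\Oo/(u^p)$, with multiplication by $u$ identifying all the successive quotients with $C_0$. With respect to the resulting splitting $\bar B\cong\bigoplus_{j=0}^{p-1}u^jC_0$ the operator $L_{\bar b'}$ is block lower triangular with every diagonal block equal to multiplication by the image $\overline{b'}\in C_0$, whence $\det(L_{b'})\equiv N_{C_0/\bar\Oo}(\overline{b'})^p\pmod{u^p}$. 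As $b'\notin uB$ we have $\overline{b'}\neq 0$, and the norm of a nonzero element of the domain $C_0$ over $\bar\Oo$ is nonzero; therefore $N_A(b')^{p^n}=\det(L_{b'})\not\equiv 0\pmod{u^p}$, so $u^p\nmid N_A(b')$, as required. I expect the delicate points to be precisely the commutativity of $\gr_u\bar B$ (which uses the exact shape of the four relations) and the block-triangular form of $L_{\bar b'}$ for the $u$-adic splitting; the rest is bookkeeping, and one could instead quote the identity $A_{\bar X}(U)=B$ directly from the construction of the norm based extension in \cite{T9}, in which case the proposition is merely a summary of the coordinate computations preceding its statement.
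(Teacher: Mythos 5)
Your proposal is correct in substance, but it proves considerably more than the paper does, and by a different route. The paper's own ``proof'' is essentially the coordinate computation preceding the statement: it introduces the projective variables $\Gamma_i,Z$, the chart functions $u,v,\bar\gamma_i$, verifies the four commutation relations, and then states the proposition as a summary, with the identification of $A_{\bar X}(U)$ with the algebra generated by these elements resting on the construction of the norm based extension in \cite{T9} and the last two sentences dismissed as ``checked easily''. You reproduce the coordinate computation (correctly, with $h_{12}=-1$), but then you actually prove the crucial identity $A_{\bar X}(U)=\kk\langle u,v,\bar\gamma_3,\dots,\bar\gamma_{2n}\rangle$ from the definition of $A_{\bar X}$: the inclusion $\supseteq$ via $N_A(b)^{p^n}=\det(L_b)$ computed on the $\Oo$-lattice $B$ plus integral closedness of $\Oo$, and the inclusion $\subseteq$ by writing $s=u^{-m}b'$ with $b'\notin uB$ and showing $u^p\nmid N_A(b')$ through the $u$-adic filtration of $B/u^pB$, whose graded pieces are all $C_0=\kk[v,\bar\gamma_3,\dots,\bar\gamma_{2n}]$ and on which $L_{b'}$ acts block-triangularly with diagonal blocks multiplication by $\overline{b'}$ (this works because $u$ is normal and conjugation by $u$ is the identity mod $uB$, exactly as you indicate). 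This buys a self-contained verification that the sections on an affine piece are precisely this algebra --- in effect re-deriving on affine charts that $A_{\bar X}$ is an $\Oo_{\bar X}$-algebra sheaf --- at the cost of length and of a few asserted ingredients: the freeness of $B$ over $\Oo$ on the ordered monomials (a PBW/triangular-change-of-basis argument inside $A[\gamma_1^{-1}]$, not spelled out), the standard Azumaya facts $\det(L_b)=N_A(b)^{p^n}$ and multiplicativity of $N_A$, and the centrality of $v^p$; note that this last point, which both you and the paper gloss, genuinely requires $p$ odd (for $p=2$ one has $[v^2,u]=u^5\neq0$, so the displayed description of $\Oo_{\bar X}(U)$ must then be read with $\Gamma_2^p\Gamma_1^{-p}$ in place of $v^p$). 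These are routine to fill in and do not affect the correctness of your argument for odd $p$.
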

The last two sentences are also checked easily.
\qed

\subsubsection{Regularity of norm based extensions of Weyl algebras}
We now state a main result of this paper:
\begin{proposition}\label{proposition:main}
The norm based extension $A_{\bar X}$ of the Weyl algebra $A$ 
is 
Auslander regular. 
\end{proposition}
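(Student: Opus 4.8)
The plan is to apply Proposition~\ref{proposition:AuslanderRegularScheme}: since $\bar X=\mathbb P^{2n}$ is of finite type over $\kk$, the sheaf $A_{\bar X}$ is Auslander regular as soon as the ring of sections $A_{\bar X}(U)$ is an Auslander regular ring for every member $U$ of some affine open cover. I would take the standard cover of $\mathbb P^{2n}$ by $2n+1$ charts $U_0,U_1,\dots,U_{2n}$, where $U_0=X$ is the chart on which the $\gamma_i$ are regular, so that $A_{\bar X}(U_0)=A$, while the remaining $U_j$ each meet the hyperplane $H$ at infinity and are presented as in Proposition~\ref{proposition:Agenerator}.

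For $U_0$ there is nothing new: the Weyl algebra $A$ is Auslander regular by the classical application of Bj\"ork's theorem (Theorem~\ref{theorem:Bjork}) to the Bernstein filtration, for which $\gr A\cong\kk[t_1,\dots,t_{2n}]$ is Auslander regular by Corollary~\ref{corollary:auslanderregular}. For $U_j$ with $j\ge1$, write $B=A_{\bar X}(U_j)=\kk\langle u,v,\bar\gamma_3,\dots,\bar\gamma_{2n}\rangle$ with the commutation relations of Proposition~\ref{proposition:Agenerator}; being an iterated Ore extension of $\kk[u]$ it has a PBW basis of ordered monomials, and it is finite over the central polynomial subring $R''=\kk[u^p,v^p,\bar\gamma_3^p,\dots,\bar\gamma_{2n}^p]$. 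I would filter $B$ by the $(u)$-adic filtration $F_kB=(u)^k$. Since $u$ is central modulo $(u)^2$ one gets $(u)^k=u^kB$, the quotient $B/(u)=\kk[v,\bar\gamma_3,\dots,\bar\gamma_{2n}]$ is commutative (every one of the relations degenerates modulo $(u)$), and $\bar u$ is central in the associated graded; a short computation then identifies
$$
\gr B\;=\;\bigoplus_{k\ge0}(u)^k/(u)^{k+1}\;\cong\;\kk[v,\bar\gamma_3,\dots,\bar\gamma_{2n},\bar u],
$$
the commutative polynomial ring in $2n$ variables, which is Auslander regular by Corollary~\ref{corollary:auslanderregular}. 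By Theorem~\ref{theorem:Bjork}, $B$ is then Auslander regular, and Proposition~\ref{proposition:AuslanderRegularScheme} finishes the argument.

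The step I expect to cost the most work is verifying the ``closure conditions'' required by Theorem~\ref{theorem:Bjork} for the $(u)$-adic filtration of $B$, since $B$ is not a quasi local ring and Lemma~\ref{lemma:closure} does not apply verbatim. The remedy I have in mind is to observe that the $(u)$-adic topology on $B$ coincides with the $\mathfrak qB$-adic topology for the ideal $\mathfrak q=u^pR''$ of the Noetherian polynomial center $R''$, after which the closure conditions follow from the Artin--Rees / Krull intersection theorem for the finite $R''$-module $B$, exactly along the lines of the proof of Lemma~\ref{lemma:closure}. If one prefers to invoke Lemma~\ref{lemma:closure} and its corollary only in the form already established, one can instead argue stalk-wise: the stalks of $A_{\bar X}$ at points of $X$ are localizations of $A$, hence Auslander regular, whereas at a point $\m\in H$ the stalk is a Noetherian quasi local ring finite over the regular local ring $\Oo_{\bar X,\m}$ whose $\m$-adic associated graded ring is a polynomial ring tensored with the Rees algebra of the Weyl algebra $A_{n-1}(\kk)$, and the latter is handled by the same $(u)$-type filtration, so the corollary to Lemma~\ref{lemma:closure} applies. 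Either way the Auslander regularity of every local piece, hence of $A_{\bar X}$, follows.
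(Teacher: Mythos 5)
Your computation of the $u$-adic associated graded ring of a chart ring at infinity is correct: for $B=A_{\bar X}(U_j)$ with the relations of Proposition \ref{proposition:Agenerator} one indeed gets $\gr_{uB}B\isoto \kk[v,\bar\gamma_3,\dots,\bar\gamma_{2n},\bar u]$. The genuine gap is in the step you yourself flagged: Theorem \ref{theorem:Bjork} cannot be applied to the $u$-adic filtration on the \emph{whole} chart ring, and the Artin--Rees/Krull remedy you propose does not repair this. The point of Lemma \ref{lemma:closure} is that the filtration ideal lies under the maximal ideal of a quasi local ring, so that Krull intersection applies; on the chart ring the ideal $u^pR''$ is not contained in any Jacobson radical, and the strong closure condition actually \emph{fails}: the finitely generated left ideal $B(1-u)$ is dense in the $u$-adic topology, since $1=(1-u)(1+u+\dots+u^{k-1})+u^k$ for every $k$, yet $1\notin B(1-u)$ because $1-u$ is not invertible in $B$ (its norm in $R''$ is a nonunit; equivalently $1-u$ becomes a zero divisor modulo the central element $u^p-1$). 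More fundamentally, without a Zariskian-type hypothesis the implication ``$\gr$ Auslander regular $\implies$ the ring is Auslander regular'' is simply false: already for the affine coordinate ring of a nodal plane curve, the $\m$-adic filtration at a smooth closed point has associated graded ring $\kk[t]$, while the ring is not regular. The $u$-adic graded ring only controls the formal neighbourhood of $H$ inside $U_j$, not the points of $U_j\cap X$, so the chart-wise application of Bj\"ork proves strictly less than what condition (3) of Proposition \ref{proposition:AuslanderRegularScheme} requires.

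Your fallback is essentially the paper's own proof and is the way to go: argue stalk-wise; at points of $X$ the stalks are localizations of $A$ (the paper in fact runs Bj\"ork on $\gr_\m A_\m\isoto M_{p^n}(R/\m)[t_1,\dots,t_{2n}]$ via Corollary \ref{corollary:auslanderregular}); at $\m\in H$ the stalk is quasi local and finite over $\Oo_{\bar X,\m}$, so the corollary to Lemma \ref{lemma:closure} legitimizes Bj\"ork for the $\M$-adic filtration, whose graded ring is, as you say, a polynomial ring tensored with the homogenized (Rees) algebra of $A_{n-1}(\kk)$ --- the paper's $\tilde A$. Be aware, though, that when you (and the paper) then treat $\tilde A$ by the same $u$-type filtration, the closure-condition issue reappears, since $\tilde u$ is not in the radical of $\tilde A$; at that point one should either use a graded/complete version of the Zariskian hypothesis or simply invoke the known stability of Auslander regularity under iterated Ore extensions, which applies to $\tilde A$ (and, incidentally, would also rescue your chart-wise route directly, since $B$ itself is an iterated Ore extension of $\kk[u]$).
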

\begin{proof}
As we have seen in 
Proposition \ref{proposition:AuslanderRegular}, 
it is enough to 
prove that the stalk $A_{\bar X,\m}$ of the sheaf $A_{\bar S}$ at 
each closed point $\m$ of ${\bar X}$ is an Auslander regular ring.
Let us first deal with the case where $\m\in X$. $\m$ is then identified 
with the maximal ideal of $R=Z(A)$.
Looking at Proposition \ref{proposition:AuslanderRegular} once again, 
we see that our claim in this case is  
a consequence of the fact that $A$ is a regular ring which is 
 already well-known.
Indeed, since $A=A_n(\kk)$ has a filtration whose associated
graded algebra is a polynomial ring, 
(putting aside of verifying the closure condition of
such filtration,) we see by using the theorem of 
Bj\"ork(Theorem\ref{theorem:Bjork} that $A_n(\kk)$ 
is Auslander regular.  
Here, however, let us prove directly 
that $A_{\bar X,\m}=A\otimes_R R_{\m}$ is Auslander regular.
By using a structure theory of the Weyl algebras (such as \cite[Lemma 3]{T4}),
we see  that
$\gr_{\m}A_{\m}$ is isomorphic to the full matrix algebra
 $M_{p^n}(R/\m)[t_1,\dots,t_{2n}]$ over a polynomial ring.
Thus by using Corollary \ref{corollary:auslanderregular} we see that 
$\gr_{\m } A_{\m}$ is Auslander regular and hence 
by using the theorem of Bj\"ork
 we conclude  that  $A_{\m}$ is Auslander regular.

Let us next deal with the case where 
 $\m$ is on the hyper plane $H=\bar X \setminus X$ 
at infinity. 
In terms of the coordinate described in Proposition \ref{proposition:Agenerator},
 the stalk $A_{\bar X,{\m}}$ is a quasi local ring with the maximal ideal
 $\M$ generated $\{u,v\} \cup \{\bar\gamma_i \}_{i=3}^{2n}$.
The associated graded algebra $\gr_\M (A_{\bar X, \m})$ 
is isomorphic to an algebra $\tilde A$
generated by elements 
$\{\tilde u,\tilde v\}\cup \{\tilde \gamma_i\}_{i=3}^{2n}$ 
which satisfy the following relations.
$$
[\tilde v,\tilde u]=0,
\quad
[\tilde\gamma_i, \tilde \gamma_j]=h_{i j} \tilde u^2,
\quad
[\tilde u,\tilde \gamma_i]=0,
\quad
[\tilde v,\tilde \gamma_i]=0.
$$
The algebra $\tilde A$ again has an filtration by $u \tilde A$ whose associated
graded ring $\gr_{u \tilde A }\tilde A$ is a usual commutative polynomial ring.
By the theorem of Bj\"ork (Theorem \ref{theorem:Bjork}), 
we see that the algebra $\tilde A$ is Auslander regular 
and then by the same theorem we see that $A$ is also Auslander regular.

\end{proof}

\subsection{Inverse Frobenius pull-backs}
Before proceeding further, we need to review the definition of 
``inverse Frobenius pull-backs'' in \cite{T9}.
In short, $X^\dagger$ is obtained by adjoining $p$-th power roots of the 
coordinate functions of $X$.

In order to explain the notation more clearly, it might be helpful to
compare it with a notation used in another paper.
Let us compare here our notation with the one which appears in a 
paper of Illusie\cite{illusie1}.
There one sees at (2.1.1) a notation such as $X^{(p)}$. 
In short, ``$X^\dagger $ to $X$  in our paper is what $X$ is to $X^{(p)}$
in the paper of Illusie''.
To avoid confusion, let us write $Y$
in place of the  $X$ in the Illusie's paper.
The diagram (2.1.1) then looks like this.
$$
\xymatrix{
Y\ar[d]  & \ar[l] Y^{(p)} \ar[d] &\ar[l]_{F_{Y/S}} 
\ar[dl] \ar@(u,u)[ll]_{F_Y}Y  \\
S &\ar[l]_{F_S} S&
}
$$
In the present paper, we concentrate on a case where $S=\Spec(\kk)$.
Then the diagram above may be rewritten  
in the present paper's notation
provided we put $X^\dagger=Y$:
$$
\xymatrix{
X^{\dagger}\ar[d]&\ar[l] X \ar[d]&
 \ar[l]_{F_{Y/S}} \ar[dl]  \ar@(u,u)[ll]_{F_X} X^\dagger \\
S& \ar[l]_{F_S} S&
}
$$

The same sort of idea applies and our spaces  
$X,\bar X$, and  $H$ has their own inverse Frobenius pull-backs
who are denoted by 
$X^\dagger,\bar X^\dagger$, and  $H^\dagger$.

\subsection{$\Oo_{\bar X^\dagger}$ as a classical counterpart of $A_{\bar X}$}
Since the inverse Frobenius pullback $\bar X^\dagger$ and $\bar X$ have the
same underlying space $|\bar X^\dagger|=|\bar X|$, we identify 
$\Oo_{\bar X^\dagger}$ with a sheaf of $\Oo_{\bar X}$-algebra on $X$.
$\Oo_{\bar X^\dagger}$ is thought of as a 
``classical counterpart'' of our sheaf $A_{\bar X}$.
Indeed, on the affine piece $X$ of $\bar X$,
$\Oo_{\bar X^\dagger}$ is generated by 
$t_1=(\gamma_1^p)^{1/p},\dots,t_{2n}= (\gamma_{2n}^p)^{1/p}$.
The affine coordinate ring $\kk[t_1,t_2,\dots, t_{2n}]$ resembles
the Weyl algebra $A_n(\kk)$ in many ways.
For example,  
Both algebras are free module of rank $p^{2n}$ over 
$Z=\Spec(\kk[t_1^p,\dots,  t_{2n}^p]$.
Moreover, there exists a filtration on $A_n(\kk))$ such that
the associated graded algebra $\gr(A_n(\kk))$ is isomorphic to
the polynomial ring $\kk[t_1,t_2,\dots, t_{2n}]$.
We may say that $A_n(\kk)$ is `asymptotically commutative' and 
gradually looks like the polynomial ring $\kk[t_1,t_2,\dots, t_{2n}]$.

Let us look at this  by studying the behavior of the sheaf $A_{\bar X}$
around the hyperplane $H$ at infinity.
We have shown in \cite{T9} that there exists a specific ideal $\Jot_{H^\dagger}$
of $A_{\bar X}$. We may regard it as a non commutative analogue of the ideal
sheaf of $H^\dagger$.
It is related to the ideal sheaf  $\Iscr_H$ of $H$ by the following equation:
$$
\Jot_{H^\dagger}^p\isoto \Iscr_H A_{\bar X}.
$$
The situation may probably be well-understood in terms of coordinates
introduced in
Proposition \ref{proposition:Agenerator}.
$\Iscr_H$ is an ideal of $\Oo_{\bar X}$ generated by $u^p$, whereas
$\Jot_{H^\dagger}$ is an ideal of $A_{\bar X}$ generated by $u$.
%
The quotient $A_{\bar X}/\Jot_{H^\dagger} $ is isomorphic to 
$\Oo_{H^\dagger}$.
As a result, we obtain a ``symbol map''
$$
\rho:A_{\bar X} \to \Oo_{H^\dagger}
$$ 
which essentially sends each element of $A_{\bar X}$ to its principal symbol
restricted to $H^\dagger$.
For the ease of future reference, let us write the fact down in a following:
\begin{proposition} \label{proposition:symbol}
There exists a ``symbol map''
$$
\rho:A_{\bar X} \to \Oo_{H^\dagger}
$$ 
which gives rise to an isomorphism
$$
\bar\rho:A_{\bar X} /\Jot_{H^\dagger}\to \Oo_{H^\dagger}.
$$ 

The following diagram  is commutative.
$$
\xymatrix{
&A_{\bar X} \ar[r]_{\rho} \ar[d]_{N_A} & \Oo_{H^\dagger}\ar[d]^{\bullet^{p^n}} \\
& \Oo_{\bar X^\dagger}  \ar[r]^{\text{restr.}}   &  \Oo_{H^\dagger}
}
$$
(Note however that $N_A$ is not additive.
\end{proposition}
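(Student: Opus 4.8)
The plan is to reduce the structural assertions to \cite{T9} and to the explicit local picture of Proposition~\ref{proposition:Agenerator}, and then to check the square by computing $N_A$ on generators and controlling the non-additive part of $N_A$ via the reduced characteristic polynomial near $H$.

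First, for the existence of $\rho$ and of the isomorphism $\bar\rho$: these are among the results of \cite{T9}, but I would also make them transparent locally. On an affine chart $U$ of $\bar X$ as in Proposition~\ref{proposition:Agenerator}, the ideal $\Jot_{H^\dagger}$ is $uA_{\bar X}(U)$; using the relations $[v,u]=u^{3}$, $[v,\bar\gamma_i]=u^{2}\bar\gamma_i$, $[\bar\gamma_i,\bar\gamma_j]=h_{ij}u^{2}$ one checks that $uA_{\bar X}(U)=A_{\bar X}(U)u$ is a two-sided ideal and that $\Jot_{H^\dagger}^{\,p}=u^{p}A_{\bar X}(U)=\Iscr_H A_{\bar X}(U)$. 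Since every defining commutator is divisible by $u$, the quotient $A_{\bar X}(U)/uA_{\bar X}(U)$ is commutative, and a Poincar\'e--Birkhoff--Witt basis of $A_{\bar X}(U)$ over $\Oo_{\bar X}(U)$ exhibits it as the polynomial ring $\kk[v,\bar\gamma_3,\dots,\bar\gamma_{2n}]$, which is exactly $\Oo_{H^\dagger}(U)$ (recall $u^{p}$ is the local equation of $H$). These identifications are canonical, hence glue over the standard charts of $\bar X$; one sets $\rho$ to be the composite of the projection $A_{\bar X}\twoheadrightarrow A_{\bar X}/\Jot_{H^\dagger}$ with $\bar\rho$.

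For the square I would again work on such a chart. For a coordinate generator $x\in\{u,v,\bar\gamma_3,\dots,\bar\gamma_{2n}\}$ one has $x^{p}\in\Oo_{\bar X}(U)$, so on each fibre of $A_X$ over a point of $X$ the operator $x$ has scalar $p$-th power, hence a single eigenvalue; its reduced characteristic polynomial is $T^{p^{n}}-x^{p^{n}}$ and $N_A(x)=x^{p^{n}}$, regarded inside $\Oo_{\bar X^\dagger}$. Restricting to $H^{\dagger}$ gives $\restr_H(N_A(x))=\rho(x)^{p^{n}}$ on generators, and from $N_A(uw)=N_A(u)N_A(w)=u^{p^{n}}N_A(w)\in\Iscr_H\Oo_{\bar X^\dagger}$ we see that $\restr_H\circ N_A$ kills $\Jot_{H^\dagger}$, hence descends to a multiplicative map $\phi\colon\Oo_{H^\dagger}\to\Oo_{H^\dagger}$ that agrees with the $p^{n}$-th power map on the generators $\rho(v),\rho(\bar\gamma_i)$.

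The one place that needs real work — and the reason for the parenthetical remark that $N_A$ is not additive — is passing from these generators to all sections: multiplicativity together with the values on generators does not pin down $\phi$. I would close this by analysing the reduced characteristic polynomial $\chi_s(T)$ of an arbitrary $s\in A_{\bar X}(U)$ modulo $\Iscr_H$. Because every structure commutator lies in $\Jot_{H^\dagger}^{2}=u^{2}A_{\bar X}(U)$ (and, on iterating, higher brackets acquire still more factors of $u$), a computation with the Jacobson $p$-th-power formula shows that all the intermediate coefficients of $\chi_s$ — the reduced trace and its higher analogues — lie in $\Iscr_H$, so that modulo $\Iscr_H$ one has $\chi_s(T)\equiv T^{p^{n}}-\rho(s)^{p^{n}}=(T-\rho(s))^{p^{n}}$; comparing constant terms yields $\restr_H(N_A(s))=\rho(s)^{p^{n}}$ for every $s$, i.e.\ $\phi=\bullet^{\,p^{n}}$, which is the commutativity of the square. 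The sign $(-1)^{p^{n}}$ relating $N_A(s)$ to the constant term of $\chi_s$ is absorbed by the sign already in that constant term, so no discrepancy arises; alternatively, since $\kk$ is infinite one may run the last step by noting that $\restr_H\circ N_A$ and $\bullet^{\,p^{n}}\circ\rho$ are polynomial laws of degree $p^{n}$ agreeing on the Zariski-dense locus of sections that collapse to a single eigenvalue on each fibre. I expect this eigenvalue-collapse step to be the main obstacle; everything else is bookkeeping or a citation of \cite{T9}.
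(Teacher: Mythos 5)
The structural half of your proposal is fine and consistent with the paper: on a chart as in Proposition~\ref{proposition:Agenerator}, $u$ is normal, $\Jot_{H^\dagger}=uA_{\bar X}(U)$, $\Jot_{H^\dagger}^p=\Iscr_H A_{\bar X}(U)$, and the quotient is the polynomial ring $\kk[v,\bar\gamma_3,\dots,\bar\gamma_{2n}]=\Oo_{H^\dagger}(U\cap H)$; this is exactly the local picture the paper uses (the paper itself gives no proof of the proposition --- it records it as a fact from \cite{T9} and the preceding coordinate discussion), so there is nothing to object to there.

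The verification of the commutative square, however, has a genuine gap, and it sits exactly at the point you yourself flag. First, the descent step is invalid as written: because $N_A$ is not additive, knowing that the restriction to $H^\dagger$ of $N_A(uw)$ vanishes does \emph{not} show that the restriction of $N_A$ factors through $A_{\bar X}/\Jot_{H^\dagger}$; what is needed is $N_A(s+uw)\equiv N_A(s)$ modulo $u$, which is essentially the statement to be proved. Second, your main closing claim --- that all intermediate coefficients of the reduced characteristic polynomial $\chi_s$ of an \emph{arbitrary} section lie in $\Iscr_H$, so that $\chi_s(T)\equiv(T-\rho(s))^{p^n}$ --- is essentially equivalent to the commutativity of the square (apply the square to $\lambda-s$), and the phrase ``a computation with the Jacobson $p$-th-power formula shows'' is not such a computation: Jacobson's formula controls $p$-th powers of sums, not the trace-like coefficients of $\chi_s$ for a general $s$, and no argument is exhibited. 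Even granting that claim, identifying the constant term with $\rho(s)^{p^n}$ needs an extra step (e.g.\ Cayley--Hamilton for $\chi_s$ reduced modulo $\Jot_{H^\dagger}$), which you pass over with ``comparing constant terms''. Third, the fallback via polynomial laws does not repair this: $\kk$ is only assumed to be a field of positive characteristic (possibly finite, so one must at least base-change), and you give no reason why sections ``collapsing to a single eigenvalue on each fibre'' form a Zariski-dense subset of any finite-dimensional space of sections --- generically an element of an Azumaya algebra does not have a single eigenvalue, so density is doubtful, and without it the argument says nothing. To make the proof self-contained you would have to actually establish the norm identity for all sections, for instance by a direct analysis at the generic point of $H$, where $A_{\bar X}$ becomes an order over a discrete valuation ring with radical $\Jot_{H^\dagger}$ and commutative residue field, and where the absence of nontrivial idempotents forces the reduction of $\chi_s$ to be a power of the minimal polynomial of $\rho(s)$; alternatively, cite \cite{T9}, as the paper does.
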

\qed

\section{A characterization of the Weyl algebra among its cousins}
\subsection{Algebras Zariski isomorphic to a Weyl algebra}

We continue to consider a Weyl algebra $A$ 
 over a field of positive characteristic. 
and employ $X, \bar X, H=\bar X \setminus X$ as in the previous subsection.
Let us study  a ``cousin'' of $A$.
Namely, we consider an $Z$-algebra $B$ such that its associated sheaf
$B_X$ is a sheaf of algebra which is Zariski locally isomorphic to $A_X$
as an $\Oo_X$-algebra. By abuse of language we call such $B$
 an  algebra Zariski locally isomorphic to $A$.

As described for example in \cite{Kontsevich1},\cite{T7},
the algebra $B$ is in one to one correspondence with an $A$-module 
$W$ of rank one whose associated sheaf $W_X$ on $X$ 
is a locally free $\Oo_X$-module.
The correspondence is given by the following relation.
$$
B=\End_A (W)
$$

It is known that we have a bunch of 
non trivial locally free $A$-modules of rank $1$. 
 Indeed, when $\kk$ is
a field of characteristic $0$, projective $A_1(\kk)$-modules of rank $1$ 
are parametrized by ``Calogero-Moser spaces''(\cite{BC}).
We may then restrict the coefficient ring and consider them 
''mod $p$'' and obtain such modules for our case.)
  So $A$ has a lot of such ``cousins''.

We also note that a reflexive $A$-module $W$ of rank one admits a
norm $N_W: W \to Z$ and
its associated sheaf $W_X$ has a norm based extension $W_{\bar X}$,
 a sheaf on $\bar X$ which extends $W_X$ to $\bar X$
(See \cite{T9}.)
Likewise, we see easily that an algebra $B$ 
which is locally isomorphic to $A$ admits a norm $N_B: B\to Z$
and that 
its associated sheaf $B_X$ has a norm based extension $B_{\bar X}$ 
as a sheaf of $\Oo_{\bar X}$-algebras in a similar manner.

The sheaf $B_{\bar X}$ is of course locally isomorphic to $A_{\bar X}$ on
$X=\A^{2n}$. However, It is not always true that $B_{\bar X}$ is
locally isomorphic to $A_{\bar X}$ on the whole of $\bar X=\mathbb P^{2n}$.
The truth is far from that. In fact, $B_{\bar X}$ is 
Auslander regular only in the trivial case where $B$ is isomorphic to $A$
as we will prove later.

The following lemma is a consequence of the Auslander regularity of 
$A_{\bar X}$ and is a generalization of 
the equivalence (1) $\iff$ (4) of \cite[Theorem 2.4]{T8}

\begin{lemma}
Let $W$ be a reflexive $A$-module of rank one.
Assume $W_{\bar X}$ is a
locally free $\Oo_{\bar X}$-module on a Zariski open subset $U$ of
$\bar X$. Then $W_{\bar X}$ is $A_{\bar X}$-locally free on $U$.
\end{lemma}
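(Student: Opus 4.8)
The claim is local on $\bar X$, so I would fix a point $\m \in U$ and show that the stalk $W_{\bar X,\m}$ is a free module over the stalk $A_{\bar X,\m}$. If $\m \in X$ this is classical: $A_{X,\m}$ is (Morita equivalent to) a regular local ring and a reflexive module of rank one that is $\Oo_X$-locally free is automatically $A_X$-locally free by the usual Azumaya/Morita dictionary over the smooth locus $X = \A^{2n}$; this is exactly the content of \cite[Theorem 2.4]{T8} being generalized. So the real work is at a point $\m$ on the hyperplane $H = \bar X \setminus X$ at infinity. There, by Proposition \ref{proposition:Agenerator}, $A_{\bar X,\m}$ is a Noetherian quasi local ring, finite over the regular local ring $R_\m = \Oo_{\bar X,\m}$, and by Proposition \ref{proposition:main} it is Auslander regular.

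First I would observe that $W_{\bar X,\m}$ is a finitely generated $A_{\bar X,\m}$-module, reflexive (as the norm based extension of a reflexive module), and by hypothesis $\Oo_{\bar X,\m}$-free; in particular it is a maximal Cohen–Macaulay module over $R_\m$. The key algebraic input is Theorem \ref{theorem:Ramras}: a quasi local ring finite over a central regular local ring, with finite injective dimension, is $R$-free — and more to the point, the Auslander regularity of $A_{\bar X,\m}$ forces every finitely generated $A_{\bar X,\m}$-module that is maximal Cohen–Macaulay over $R_\m$ to have finite projective dimension zero over $A_{\bar X,\m}$, i.e. to be projective, hence (quasi local) free. Concretely: since $\gl\dim A_{\bar X,\m} < \infty$, take a finite projective resolution of $W_{\bar X,\m}$; the Auslander condition together with the fact that $W_{\bar X,\m}$ has grade $0$ and is $R_\m$-maximal-Cohen–Macaulay lets one run the standard Auslander–Bridger / Ischebeck argument to conclude $\Ext^i_{A_{\bar X,\m}}(W_{\bar X,\m}, A_{\bar X,\m}) = 0$ for $i>0$, whence $W_{\bar X,\m}$ is reflexive-and-perfect-of-grade-$0$, so projective, so free by quasi locality (Lemma \ref{lemma:central}(3)).

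Next I would package the pointwise statement: having shown $W_{\bar X,\m}$ is a free $A_{\bar X,\m}$-module for every $\m \in U$, standard coherence (both $A_{\bar X}$ and $W_{\bar X}$ are coherent $\Oo_{\bar X}$-modules) upgrades this to $A_{\bar X}$-local freeness of $W_{\bar X}$ on a neighborhood of each point of $U$, hence on all of $U$. I would also note that the compatibility with the $\Oo_{\bar X}$-structure is automatic because the $A_{\bar X,\m}$-basis projects, via the symbol map $\rho$ of Proposition \ref{proposition:symbol}, to an $\Oo_{H^\dagger}$-basis of $W_{\bar X}/\Jot_{H^\dagger}W_{\bar X}$, so no separate check is needed there.

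**The main obstacle.** The delicate point is the transition from "maximal Cohen–Macaulay over the center $R_\m$" to "projective over $A_{\bar X,\m}$" — i.e. extracting from Auslander regularity (a condition phrased via grades of $\Ext$-modules, Definition of Auslander condition) the vanishing $\Ext^{>0}_{A_{\bar X,\m}}(W_{\bar X,\m}, A_{\bar X,\m}) = 0$. One must carefully use that the grade $j_A(-)$ relative to $A_{\bar X,\m}$ agrees, for finitely generated modules, with the $R_\m$-depth-codimension (this uses finiteness over $R_\m$ and a Rees-type identity $\Ext^i_A(M,A) = \Ext^i_{R_\m}(M,R_\m)$ up to the matrix-algebra twist, valid because $A_{\bar X,\m}$ is $R_\m$-free by Theorem \ref{theorem:Ramras}), so that a module which is MCM over $R_\m$ has grade $0$ and, being a first syzygy of nothing, is forced by the Auslander condition plus finite global dimension to be a summand of a free module. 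I expect the bookkeeping of this grade-translation — and verifying $A_{\bar X,\m}$ is $R_\m$-free so the translation is legitimate — to be the step requiring the most care; everything else is a routine localization-and-coherence argument.
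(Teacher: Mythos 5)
Your proposal is correct and follows essentially the same route as the paper: pass to the stalk at a point of $U$, use the Auslander regularity of $A_{\bar X}$ to get finite projective dimension of $W_{\bar X,x}$, and then use the depth (maximal Cohen--Macaulay) condition over the central regular local ring $\Oo_{\bar X,x}$ to force that projective dimension to be zero, hence freeness over the quasi local stalk. The only difference is that the step you flag as the main obstacle --- translating grades and running the Auslander--Buchsbaum-type argument --- is exactly what the paper delegates to Ramras (\cite[Lemma 2.9]{Ramras1} and \cite[Proposition 2.14]{Ramras1}) rather than reproving.
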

\begin{proof}
Let $x$ be a closed point of $U$. Since the dimension of $X$ is 
greater or equal to $2$, we have $\dim \Oo_{\bar X,x} \geq 2$.
Since $W_{\bar X}$ is locally free on $U$, we have
$\depth_{\Oo_{\bar X,x}} W_{X,x}\geq 2$.
Since $A_{\bar X}$ is regular, we see by definition 
that every stalk of $A_{\bar X}$ has
a finite global dimension. In particular,
 the projective dimension
$\pd_{ A_{\bar X, x}} W_{\bar X,x}$ is finite and according to
Ramras\cite[Lemma 2.9]{Ramras1}, it is equal to
the following quantity which is, under the notation of Ramras,
denoted as ``$\Ext_{A_{\bar X,x}}\text{-}\dim W_{\bar X, x}$''.
$$
\sup\{j \geq 0; 
\Ext^j_{ A_{\bar X,x}}
(\dim W_{\bar X, x} ,
A_{\bar X,x})
\neq 0
  \}.
$$
According to 
another Proposition of Ramras\cite[proposition 2.14]{Ramras1}, 
the quantity is equal to $0$.
Thus the projective dimension of  $W_x$ is equal to zero and  hence 
$W_x$ is locally free at $x$.

\end{proof}

\begin{corollary}\label{corollary:ae}
There exists a closed subset $F$ of codimension at least $3$ of $\bar X$
such that $W_{\bar X}$ is $A_{\bar X}$-locally free on $V_0=\bar X \setminus F$.
Accordingly, we have:
\begin{enumerate}
\item $A_{\bar X}$ is locally isomorphic to $B_{\bar X}$ on
$\bar X\setminus F$.
\item $B_{\bar X}$ is a reflexive extension of $B_{\bar X}|_{V_0}$.
\end{enumerate}

\end{corollary}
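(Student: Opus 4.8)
The plan is to localize the previous lemma to get a bound on where $W_{\bar X}$ fails to be $A_{\bar X}$-locally free, and then transfer this to the three stated consequences. First I would recall that $W_{\bar X}$ is a reflexive $\Oo_{\bar X}$-module (being the norm based extension of a reflexive $A$-module), so its non-locally-free locus is a closed subset. The standard fact for reflexive sheaves on a normal scheme is that a reflexive $\Oo_{\bar X}$-module is locally free away from a closed set of codimension at least $3$; here $\bar X = \mathbb P^{2n}$ is smooth, so this applies and we obtain a closed subset $F \subset \bar X$ with $\codim F \geq 3$ on which $W_{\bar X}$ is a locally free $\Oo_{\bar X}$-module on $V_0 = \bar X \setminus F$. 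By the preceding lemma (applied with $U = V_0$), $W_{\bar X}$ is in fact $A_{\bar X}$-locally free on $V_0$. This establishes the existence of $F$.

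Next I would derive the two numbered consequences. For (1): on $V_0$ we have an $A_{\bar X}$-module isomorphism $A_{\bar X}^{\,r} \isoto W_{\bar X}$ locally (rank $r$ being the local $A_{\bar X}$-rank of $W_{\bar X}$, which is $1$ as $W$ has rank one over $A$). Since $B = \End_A(W)$ by definition, and since $\End$ commutes with localization on the central base $X = \Spec(Z)$, one has $B_{\bar X}|_{V_0} \isoto \Homscript_{A_{\bar X}}(W_{\bar X}, W_{\bar X})|_{V_0}$; when $W_{\bar X}$ is $A_{\bar X}$-free of rank one locally, this endomorphism sheaf is locally isomorphic to $A_{\bar X}$ itself. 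Hence $B_{\bar X}$ is locally isomorphic to $A_{\bar X}$ on $\bar X \setminus F$, which is (1). For (2): since $F$ has codimension $\geq 3$ in $\bar X$, and $B_{\bar X}$, being locally an $\End$ of a reflexive module, is a reflexive $\Oo_{\bar X}$-module, the restriction map $B_{\bar X} \to j_* (B_{\bar X}|_{V_0})$ (with $j: V_0 \hookrightarrow \bar X$ the open inclusion) is an isomorphism; this is precisely the assertion that $B_{\bar X}$ is the reflexive extension of $B_{\bar X}|_{V_0}$, giving (2).

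The main obstacle I expect is making the codimension-$3$ bound for the non-locally-free locus precise in this noncommutative-over-a-central-base setting. One must be careful that the relevant notion of ``locally free'' is that of an $\Oo_{\bar X}$-module: the previous lemma already bridges $\Oo_{\bar X}$-local freeness and $A_{\bar X}$-local freeness, so it suffices to bound the $\Oo_{\bar X}$-non-locally-free locus. For a reflexive sheaf on a smooth variety the codimension of this locus is at least $3$ by the standard homological characterization of reflexivity (a sheaf is reflexive iff it is the kernel of a map between locally free sheaves, equivalently satisfies Serre's $S_2$ together with $S_1$-type conditions at codimension-$2$ points), so this is essentially a citation. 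I would note in passing that the hyperplane $H$ at infinity is a divisor, hence of codimension $1$, so $F$ genuinely lies deep inside $\bar X$ and in particular $W_{\bar X}$ is $A_{\bar X}$-locally free on a neighbourhood of the generic point of $H$ — a point that will matter for the later use of the symbol map $\rho$ from Proposition \ref{proposition:symbol}.
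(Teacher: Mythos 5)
Your proposal is correct and follows exactly the argument the paper intends (the corollary is left without a written proof there): reflexivity of $W_{\bar X}$ plus the standard fact that a reflexive sheaf on the smooth variety $\bar X=\mathbb P^{2n}$ is locally free outside a closed set of codimension at least $3$, then the preceding lemma to upgrade to $A_{\bar X}$-local freeness, with (1) via $B=\End_A(W)$ and (2) via reflexivity of $B_{\bar X}$ and $\codim F\geq 2$. No gaps worth noting.
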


\qed

\subsection{Summary of results on reflexive sheaves}
In the sequel, we need to handle reflexive sheaves. Let us briefly
summarize here some of the results on reflexive sheaves which appear in
the usual commutative algebraic geometry..

Recall that a coherent sheaf $\Eff$ on a scheme $X$ is reflexive
if it is isomorphic to its double dual.

\begin{proposition}\cite[Proposition 1.1]{Ha2}
A coherent sheaf $\Eff$ on a noetherian integral scheme $X$ is reflexive if
and only if (at least locally) it can be included din an exact sequence
$$0 \to \Eff \to \Ee \to \Ge\to 0,$$
where $\Ee$ is locally free and $\Ge$ is torsion-free.

\end{proposition}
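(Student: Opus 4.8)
The plan is to prove the two implications separately: the ``only if'' direction by a dualization trick applied to a local free presentation, and the ``if'' direction by a short computation with $\mathscr{E}xt$-sheaves. Throughout I write $\Eff^\vee=\Homscript(\Eff,\Oo_X)$, and I will use repeatedly that on the integral scheme $X$ a dual sheaf $\Ge^\vee$ — and hence any subsheaf of such a sheaf — is torsion-free, and that $\Homscript(\mathcal T,\Oo_X)=0$ whenever $\mathcal T$ is a torsion sheaf (its stalks are killed by nonzero elements of the local domains $\Oo_{X,x}$).

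For the ``only if'' direction, assume $\Eff\isoto(\Eff^\vee)^\vee$. On a sufficiently small affine open set I would present $\Eff^\vee$ as a quotient of a finite free sheaf, obtaining a short exact sequence $0\to\mathcal K\to\Oo_X^{\oplus n}\to\Eff^\vee\to0$. Applying the left-exact functor $\Homscript(-,\Oo_X)$ and using $(\Eff^\vee)^\vee\isoto\Eff$, I get an exact sequence $0\to\Eff\to\Oo_X^{\oplus n}\to\mathcal K^\vee$; taking $\Ge$ to be the image of the last map gives the desired sequence $0\to\Eff\to\Ee\to\Ge\to0$ with $\Ee=\Oo_X^{\oplus n}$ locally free and $\Ge\subseteq\mathcal K^\vee$ torsion-free.

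For the ``if'' direction, suppose locally $0\to\Eff\to\Ee\to\Ge\to0$ with $\Ee$ locally free and $\Ge$ torsion-free, and write $i$ for the inclusion $\Eff\hookrightarrow\Ee$. Then $\Eff$ is torsion-free, so the canonical map $\Eff\to\Eff^{\vee\vee}$ is injective and it remains only to show that it is onto. First I would dualize the sequence once: since $\mathscr{E}xt^1(\Ee,\Oo_X)=0$, this exhibits $\mathscr{E}xt^1(\Ge,\Oo_X)$ as the cokernel of $i^\vee\colon\Ee^\vee\to\Eff^\vee$, and this $\mathscr{E}xt^1$ is a torsion sheaf because it vanishes at the generic point of $X$. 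Setting $\mathcal I=\Image(i^\vee)$, I would then dualize the short exact sequences $0\to\Ge^\vee\to\Ee^\vee\to\mathcal I\to0$ and $0\to\mathcal I\to\Eff^\vee\to\mathscr{E}xt^1(\Ge,\Oo_X)\to0$; using $\Ee^{\vee\vee}\isoto\Ee$ and the vanishing of $\Homscript$ of the torsion sheaf $\mathscr{E}xt^1(\Ge,\Oo_X)$, these yield injections $\mathcal I^\vee\hookrightarrow\Ee$ and $\Eff^{\vee\vee}\hookrightarrow\mathcal I^\vee$ whose composite is, up to the canonical identifications, the map $i^{\vee\vee}\colon\Eff^{\vee\vee}\to\Ee$. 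Hence $i^{\vee\vee}$ is injective.

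To finish, I would invoke naturality of biduality: $i^{\vee\vee}$ restricts to $i$ along $\Eff\to\Eff^{\vee\vee}$, so one may view $\Eff\subseteq\Eff^{\vee\vee}\subseteq\Ee$. Then $\Eff^{\vee\vee}/\Eff$ embeds into $\Ee/\Eff=\Ge$, so it is torsion-free; on the other hand $\Eff\to\Eff^{\vee\vee}$ is an isomorphism at the generic point, so $\Eff^{\vee\vee}/\Eff$ is supported on a proper closed subset, hence torsion. A sheaf that is both torsion and torsion-free is zero, so $\Eff\isoto\Eff^{\vee\vee}$, i.e. $\Eff$ is reflexive. The step I expect to be the main obstacle is the injectivity of $i^{\vee\vee}$: this is precisely where integrality of $X$ is essential, through the fact that $\mathscr{E}xt^1$ of a torsion-free (indeed any coherent) sheaf is killed by $\Homscript(-,\Oo_X)$. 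The remaining ingredients — local free presentations, left-exactness of $\Homscript(-,\Oo_X)$, and the torsion/torsion-free dichotomy over an integral scheme — are routine bookkeeping.
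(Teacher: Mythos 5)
Your proof is correct. Note that the paper does not prove this statement at all: it is quoted verbatim from Hartshorne's paper on stable reflexive sheaves ([Ha2, Proposition 1.1]) as background material, and your argument --- dualizing a local free presentation of $\Eff^\vee$ for the ``only if'' direction, and for the ``if'' direction factoring $i^{\vee\vee}$ through $\mathcal I^\vee$ to get injectivity of $\Eff^{\vee\vee}\to\Ee$, then killing $\Eff^{\vee\vee}/\Eff$ as a torsion subsheaf of the torsion-free sheaf $\Ge$ --- is essentially the standard proof given there, with the injectivity of $i^{\vee\vee}$ (the one genuinely nontrivial point) spelled out carefully and correctly.
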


\begin{proposition}\cite[Proposition 1.9]{Ha2}
\label{proposition:rank_one_reflexive}
Assume that the base scheme $X$ is integral and locally factorial. 
Then any reflexive sheaf of rank one on $X$ is invertible.
\end{proposition}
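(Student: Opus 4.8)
The plan is to reduce the statement to a purely local question on the local rings $\Oo_{X,x}$, which by hypothesis are unique factorization domains (locally factorial means each $\Oo_{X,x}$ is a UFD), and then to invoke the classical fact that a rank-one reflexive module over a Noetherian UFD is free. First I would observe that, since invertibility is a local property, it suffices to show that for every point $x\in X$ the stalk $\Eff_x$ is a free $\Oo_{X,x}$-module of rank one; indeed, once each stalk is free of rank one, the sheaf $\Eff$ is locally free of rank one, hence invertible. Reflexivity is inherited by stalks, so $\Eff_x$ is a reflexive module of rank one over the UFD $\Oo_{X,x}$.

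Next I would carry out the commutative-algebra core: a finitely generated reflexive module $M$ of rank one over a Noetherian UFD $S$ is free. The standard argument is to view $M$ as a submodule of its generic-fibre $K\cong S\otimes_S \operatorname{Frac}(S)$ (using that $M$ is torsion-free of rank one, which follows from reflexivity via Proposition \cite[Proposition 1.1]{Ha2}), i.e. to identify $M$ with a fractional ideal; clearing denominators, one may assume $M$ is an honest ideal $I\subset S$. One then shows that a reflexive ideal in a Noetherian UFD is principal: its associated primes all have height one, each height-one prime is principal since $S$ is a UFD, and a reflexive ideal equals the intersection of its localizations at height-one primes (this is the $S_2$/reflexivity condition), which forces $I$ to be generated by a single element, namely the gcd of a set of generators. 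Hence $M\cong S$.

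The main obstacle — or rather the only substantive point — is the passage from ``reflexive of rank one over a UFD'' to ``free,'' which rests on the divisor-class-group interpretation: $\operatorname{Cl}(S)=0$ for a UFD, and rank-one reflexive modules over a normal Noetherian domain are classified by $\operatorname{Cl}(S)$. I would either cite this directly (it is standard, e.g. via the theory of divisorial ideals) or spell out the gcd argument above. Everything else — localizing the sheaf, inheriting reflexivity and rank on stalks, and concluding that stalkwise freeness of rank one gives an invertible sheaf — is routine. I expect this proposition to be quoted verbatim from Hartshorne's article on reflexive sheaves, so in the paper itself it is simply cited rather than reproved.
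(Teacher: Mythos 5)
Your proposal is correct, and it matches the intended justification: the paper itself offers no proof, quoting the statement verbatim from Hartshorne's article on stable reflexive sheaves, whose argument is exactly the reduction you describe (reflexivity and rank pass to stalks, a rank-one reflexive module over the UFD $\Oo_{X,x}$ is a divisorial fractional ideal, and triviality of the divisor class group of a UFD --- equivalently your gcd computation at height-one primes --- makes it principal, so the coherent sheaf is stalkwise free and hence invertible). The only point worth making explicit is the Noetherian/coherence hypothesis, implicit in Hartshorne's setting, which you need both to identify $\Homscript(\Eff,\Oo_X)_x$ with $\Hom_{\Oo_{X,x}}(\Eff_x,\Oo_{X,x})$ and to pass from freeness of a stalk to local freeness near the point.
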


Recall that a coherent sheaf $\Eff$ on  $X$ is normal if for every open
set $U\subseteq X$ and every closed subset $Y\subseteq U$ of 
codimension $\geq 2$, the restriction map 
$\Eff(U) \to \Eff(U\setminus Y)$ is bijective.

\begin{proposition}\cite[Proposition 1.6]{Ha2}
Let $\Eff$ be a coherent sheaf on a normal integral scheme $X$. 
The following conditions are equivalent:
\begin{enumerate}
\renewcommand{\labelenumi}{({\roman{enumi}})}
\item $\Eff$ is reflexive.
\item $\Eff$ is torsion-free and normal.
\item $\Eff$ is torsion free, and for each open $U\subseteq X$ and
closed subset $Y\subseteq U$ of codimension $\geq 2$, 
$\Eff_U\isoto j_*\Eff_{U\setminus Y}$, where $j:U\setminus Y\to U$ is the 
inclusion map.
\end{enumerate}

\end{proposition}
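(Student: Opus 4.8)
The last displayed statement to prove is Hartshorne's Proposition 1.6 characterizing reflexive sheaves on a normal integral scheme: for a coherent sheaf $\Eff$ on such $X$, reflexivity is equivalent to being torsion-free and normal, which is in turn equivalent to torsion-freeness plus the condition $\Eff|_U \isoto j_*(\Eff|_{U\setminus Y})$ for every open $U$ and closed $Y \subseteq U$ of codimension $\geq 2$. Since this is a citation of \cite[Proposition 1.6]{Ha2}, the natural route is to reproduce that proof, and the plan is to do so by a cycle of implications (i) $\implies$ (iii) $\implies$ (ii) $\implies$ (i), where the middle equivalence of (ii) and (iii) is essentially a reformulation of what ``normal'' means in terms of the pushforward $j_*$.

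First I would establish (i) $\implies$ (iii). Assume $\Eff = \Eff^{\vee\vee}$. A reflexive sheaf is torsion-free because it injects into a locally free sheaf (using the previous Proposition \cite[Proposition 1.1]{Ha2}, already cited in the excerpt): locally we have $0 \to \Eff \to \Ee \to \Ge \to 0$ with $\Ee$ locally free, and a subsheaf of a locally free sheaf over an integral scheme is torsion-free. For the $j_*$ condition, the key point is that for a fixed open $U$ and closed $Y$ of codimension $\geq 2$, the dual $\Hom_{\Oo_X}(\Ge', \Oo_X)$ of any coherent sheaf $\Ge'$ always satisfies $\Hom(\Ge',\Oo_X)|_U \isoto j_*(\Hom(\Ge',\Oo_X)|_{U\setminus Y})$; this follows from the fact that $\Oo_X$ itself is normal when $X$ is normal (this is the algebraic Hartogs / Serre's $S_2$ property of normal rings), combined with left-exactness of $\Homscript$. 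Applying this twice to $\Eff = (\Eff^{\vee})^{\vee}$ gives (iii). Then (iii) $\implies$ (ii) is immediate: (ii) is exactly the assertion that $\Eff$ is torsion-free and that $\Eff(U) \to \Eff(U\setminus Y)$ is bijective for all such $U, Y$, and bijectivity of that restriction map is precisely $\Eff|_U \isoto j_*(\Eff|_{U\setminus Y})$ read on sections (using that $\Eff$ is coherent so the sheaf statement and the statement on sections over affines agree, and codimension $\geq 2$ subsets have the extension property locally).

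The remaining implication (ii) $\implies$ (i) is the substantive one and I expect it to be the main obstacle. Here one starts from $\Eff$ torsion-free and normal and must produce the isomorphism $\Eff \isoto \Eff^{\vee\vee}$. The natural map $\Eff \to \Eff^{\vee\vee}$ is always injective when $\Eff$ is torsion-free (its kernel is the torsion subsheaf). For surjectivity one argues locally and checks the stalks at primes of height $1$ and at primes of height $\geq 2$ separately. At a prime $\pe$ of $\Oo_X$ of height $\leq 1$, $\Oo_{X,\pe}$ is a regular local ring (a DVR if height one, a field if height zero, since $X$ is normal hence $R_1$), so torsion-free modules over it are free and the biduality map is already an isomorphism there. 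Now let $U$ be an affine open and let $Y \subseteq U$ be the locus where $\Eff \to \Eff^{\vee\vee}$ fails to be an isomorphism; by the height-$1$ computation, $Y$ has codimension $\geq 2$ in $U$. Both $\Eff$ and $\Eff^{\vee\vee}$ are torsion-free (the latter being a dual), and $\Eff^{\vee\vee}$ satisfies the $j_*$-extension property by the argument used in step one. Restricting the isomorphism $\Eff|_{U\setminus Y} \isoto \Eff^{\vee\vee}|_{U\setminus Y}$ and pushing forward along $j : U\setminus Y \to U$, the normality hypothesis (ii) on $\Eff$ gives $\Eff|_U \isoto j_*(\Eff|_{U\setminus Y}) \isoto j_*(\Eff^{\vee\vee}|_{U\setminus Y}) \isoto \Eff^{\vee\vee}|_U$, and one checks this isomorphism extends the original map $\Eff \to \Eff^{\vee\vee}$; hence the map is an isomorphism over $U$, contradicting the definition of $Y$ unless $Y = \emptyset$. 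The delicate points to handle carefully are: verifying that the extended isomorphism agrees with the canonical biduality map rather than merely being some abstract isomorphism, and making sure the normality of $\Oo_X$ (used for $\Eff^{\vee\vee}$ and $\Oo_X$) is correctly invoked — this is where the integrality and normality of $X$ genuinely enter, and it is the crux of the whole proposition.
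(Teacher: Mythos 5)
Your proposal is essentially correct; note, though, that the paper offers no proof of this statement at all --- it is quoted verbatim as \cite[Proposition 1.6]{Ha2} in the summary subsection on reflexive sheaves, so the only meaningful comparison is with Hartshorne's original argument. Your cycle (i) $\Rightarrow$ (iii) $\Rightarrow$ (ii) $\Rightarrow$ (i) is the standard one, and the substantive step (ii) $\Rightarrow$ (i) is handled correctly: injectivity of $\Eff \to \Eff^{\vee\vee}$ from torsion-freeness, isomorphism at points of codimension $\leq 1$ because normality gives $R_1$ so those local rings are fields or DVRs, and then surjectivity by restricting a section of $\Eff^{\vee\vee}$ to the complement of the bad locus $Y$, pulling it back, and extending via the normality hypothesis on $\Eff$. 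The ``delicate point'' you flag --- that the extended section really maps to the given one under the canonical biduality map --- is not a genuine obstacle: both sheaves are torsion-free on an integral scheme, so sections agreeing on the dense open $U\setminus Y$ coincide, and no further compatibility check is needed. The one place you diverge from Hartshorne is (i) $\Rightarrow$ (iii): he uses the local exact sequence $0\to\Eff\to\Ee\to\Ge\to 0$ of \cite[Proposition 1.1]{Ha2} together with $\operatorname{depth}\geq 2$ of $\Oo_X$ at points of codimension $\geq 2$ (Serre's $S_2$ for normal rings), extending a section into the locally free $\Ee$ and using torsion-freeness of $\Ge$ to force it back into $\Eff$; you instead observe that any dual $\Homscript(\Ge',\Oo_X)$ inherits the extension property from $\Oo_X$ (algebraic Hartogs plus a finite presentation, i.e.\ left-exactness of $\Homscript$), and apply this to $\Eff\isoto(\Eff^{\vee})^{\vee}$. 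Both routes are valid and of comparable length; yours has the small advantage of not needing the depth formulation, while Hartshorne's generalizes more readily to the non-normal $S_2$ setting he also treats.
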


Let us now cite here the following theorem which is a generalization of 
the Horrocks\cite{Horrocks1}. 
Although we only use it in an original assumption
of Horrocks, it gave a good guideline for our study. 
It is also used in the study reflexive sheaves 
in the author's another paper\cite{T9}.
\begin{theorem}
\cite[Theorem 0.2]{AbeYoshinaga} \label{Theorem:abeyoshinaga}
Let $\kk$ be an algebraically closed field, 
$n$ be an integer greater than or equal to $3$,
 and let $E$ be a reflexive sheaf on $\mathbb P^n_{\kk}$
 of rank $r (\geq 1)$. Then $E$ splits into
a direct sum of line bundles if and only if there exists a hyperplane 
$H\subset P^n_{\kk}$
such that $E|_H$ splits into a direct sum of line bundles.
\end{theorem}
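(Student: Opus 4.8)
The ``only if'' direction is immediate, so the work lies in the converse: assuming $E|_H\cong\bigoplus_{i=1}^{r}\Oo_H(a_i)$ for a hyperplane $H=\{h=0\}$, one must produce $E\cong\bigoplus_{i=1}^{r}\Oo_{\mathbb P^n}(a_i)$. The plan is first to prove that $E$ has \emph{no intermediate cohomology} — that is, $H^i(\mathbb P^n,E(k))=0$ for every $k\in\Z$ and every $1\le i\le n-1$ — and then to read off the splitting from this. Since $E$ is reflexive it is torsion free, so $h$ is a non-zero-divisor on $E$, $\Tor_1^{\Oo_{\mathbb P^n}}(E,\Oo_H)=0$, and the restriction sequences
$$0\longrightarrow E(k-1)\xrightarrow{\,h\,}E(k)\longrightarrow E|_H(k)\longrightarrow 0$$
are exact for all $k$. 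Because $E|_H$ is a sum of line bundles on $H\cong\mathbb P^{n-1}$ with $\dim H=n-1\ge 2$, we have $H^j(H,E|_H(k))=0$ for all $k$ whenever $1\le j\le n-2$. Feeding this into the long exact cohomology sequences yields two things. First, for $i=1$ the maps $H^1(E(k-1))\to H^1(E(k))$ are surjective; combined with the vanishing $H^1(E(k))=0$ for $k\ll 0$ (valid for a reflexive sheaf, e.g.\ by Serre duality since $\mathcal{E}xt^{\,n-1}_{\Oo}(E,\Oo)=0$), an ascending induction gives $H^1(E(k))=0$ for all $k$. Second, for $2\le i\le n-1$ multiplication by $h$ is injective on $\bigoplus_k H^i(E(k))$, so combined with Serre vanishing $H^i(E(k))=0$ for $k\gg 0$ we get $H^i(E(k))=0$ for all $k$. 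This is the only step in which $n\ge 3$ is used, and it is used essentially: for $n=2$ the statement fails, e.g.\ $T_{\mathbb P^2}|_L\cong\Oo_L(2)\oplus\Oo_L(1)$ for every line $L\subset\mathbb P^2$ although $T_{\mathbb P^2}$ is indecomposable.

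For the second half I would pass to the saturated graded module. Put $S=\kk[x_0,\dots,x_n]$, $\m=S_+$, and $M=\bigoplus_{k\in\Z}H^0(\mathbb P^n,E(k))$, so that $E\cong\widetilde M$ and $\dim_S M=n+1$ (here $E$ has rank $r\ge 1$, hence full support). The standard comparison between local and sheaf cohomology gives $H^0_{\m}(M)=H^1_{\m}(M)=0$ (saturatedness) and $H^{i+1}_{\m}(M)\cong\bigoplus_k H^i(\mathbb P^n,E(k))$ for $i\ge 1$; by the first step the right-hand sides vanish for $1\le i\le n-1$. Hence $H^j_{\m}(M)=0$ for all $j\le n$, i.e.\ $\depth_S M\ge n+1=\dim_S M$, so $M$ is a maximal Cohen--Macaulay module over the regular ring $S$ and is therefore free. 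Writing $M\cong\bigoplus_i S(-d_i)$ yields $E\cong\bigoplus_i\Oo_{\mathbb P^n}(-d_i)$; restricting to $H$ and comparing with $\bigoplus_i\Oo_H(a_i)$ identifies $\{-d_i\}$ with $\{a_i\}$ as multisets, which completes the argument.

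The genuinely delicate point is confirming that the cohomological input of the first step survives for a \emph{reflexive} (rather than locally free) sheaf: the exactness of the restriction sequence and Serre vanishing are automatic, but the vanishing $H^1(\mathbb P^n,E(k))=0$ for $k\ll 0$ really does use reflexivity — it already fails for the ideal sheaf of a point — and it is precisely this, together with $\dim H\ge 2$, that closes the induction at $i=1$; so that is where I would spend the most care. A more geometric alternative, which sidesteps the cohomology modules, is deformation-theoretic: the obstruction to extending the splitting of $E|_H$ across the $k$-th infinitesimal neighbourhood of $H$ lies in $H^1(H,\mathcal{E}nd(E|_H)(-k))=\bigoplus_{i,j}H^1(\mathbb P^{n-1},\Oo(a_i-a_j-k))$, which vanishes for every $k\ge 1$ exactly because $n-1\ge 2$, so $E$ is split on the formal completion of $\mathbb P^n$ along $H$; one then upgrades this to an algebraic isomorphism by Grothendieck's existence theorem along the ample divisor $H$, using that $\mathcal{H}om(E,\bigoplus_i\Oo_{\mathbb P^n}(a_i))$ is reflexive — in that route the delicate point migrates to the formal-to-algebraic comparison of $\mathcal{H}om$-sheaves.
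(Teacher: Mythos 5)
Your argument is essentially correct, but it is not the paper's: the paper does not prove this statement at all, it simply cites \cite[Theorem 0.2]{AbeYoshinaga}, and what it records of the Abe--Yoshinaga proof is the key comparison lemma (quoted as Theorem \ref{Theorem:splitting}): if $E,F$ are reflexive, $E|_H\isoto F|_H$ and $\Ext^1_{\mathbb P^n}(F,E(-1))=0$, then any isomorphism $\psi:E|_H\to F|_H$ extends to an isomorphism $\Psi:E\to F$. The cited route therefore takes $F=\bigoplus_i\Oo_{\mathbb P^n}(a_i)$, proves only the $H^1$-type vanishing needed for $\Ext^1(F,E(-1))=\bigoplus_i H^1(E(-a_i-1))$ (by the same restriction-sequence induction you use, with the same reflexivity input $H^1(E(k))=0$ for $k\ll 0$), and lifts the given splitting isomorphism directly, whereas you prove the stronger statement that all intermediate cohomology $H^i(E(k))$, $1\le i\le n-1$, vanishes and then invoke the Horrocks/maximal Cohen--Macaulay criterion through the graded module $M=\bigoplus_k H^0(E(k))$. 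Both are sound (your justification of the two delicate points --- exactness of the restriction sequence from torsion-freeness, and $H^1(E(k))=0$ for $k\ll 0$ from depth $\geq 2$, i.e.\ vanishing of the $(n-1)$-st Ext sheaf --- is what makes the reflexive case work, and your $n=2$ counterexample is apt). The trade-off is that your method needs the full intermediate-cohomology vanishing but yields the splitting in one stroke, while the Abe--Yoshinaga method needs only an $H^1$ computation and, more importantly for this paper, produces an extension $\Psi$ of a \emph{specified} isomorphism $\psi$ on $H$; that extra functorial output is exactly what the paper later uses (the commutative diagram with $\Psi$ in the proof of the second main theorem), and it is not delivered by your Horrocks-style argument without additional work.
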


In the proof of the above-cited theorem, Abe and Yoshinaga shows the following
theorem, which is also useful for us.

\begin{theorem}(\cite[Theorem 2.2]{AbeYoshinaga})\label{Theorem:splitting}
Let $E$ and $F$ be reflexive sheaves on $\mathbb P^n_{\kk}$
$(n \geq 2)$ and $H$ be a hyperplane in
$\mathbb P^n_{\kk}$.
 Suppose $E|_H \isoto F|_H$ and 
$\Ext^1_{\mathbb P^n_{\kk}}(F,E(-1))=0$
Then $E  \isoto F$.
As is shown in the proof of \cite[Theorem 2.2]{AbeYoshinaga},
 for each isomorphism $\psi:E|_H \to F|_H$, 
there exists its extension $\Psi:E\to F$ which extends $\psi$.
\end{theorem}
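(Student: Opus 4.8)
The plan is to manufacture an isomorphism $F\to E$ by lifting, across the restriction--to--$H$ exact sequence, the given isomorphism on $H$, and then to observe that \emph{any} such lift is forced to be an isomorphism on all of $\mathbb{P}^n_{\kk}$; this yields both $E\cong F$ and the ``moreover'' statement at once. Write $i\colon H\hookrightarrow\mathbb{P}^n_{\kk}$ for the inclusion and $\ell$ for a linear form cutting out $H$.

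\emph{Step 1: the lift.} Since $E$ is reflexive, hence torsion-free, multiplication by $\ell$ is injective on $E$, so tensoring $0\to\mathcal{O}(-1)\xrightarrow{\ \ell\ }\mathcal{O}\to i_*\mathcal{O}_H\to 0$ with $E$ produces a short exact sequence $0\to E(-1)\to E\to i_*(E|_H)\to 0$. Applying $\Hom_{\mathbb{P}^n_{\kk}}(F,-)$ and using the hypothesis $\Ext^1_{\mathbb{P}^n_{\kk}}(F,E(-1))=0$ gives a surjection
\[
\Hom_{\mathbb{P}^n_{\kk}}(F,E)\ \twoheadrightarrow\ \Hom_{\mathbb{P}^n_{\kk}}(F,i_*(E|_H))\ \cong\ \Hom_H(F|_H,E|_H),
\]
the last isomorphism being $(i^*,i_*)$--adjunction; under it the surjection is simply restriction of homomorphisms to $H$. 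I then lift $\psi^{-1}\in\Hom_H(F|_H,E|_H)$ to a homomorphism $\Phi\colon F\to E$, which by construction satisfies $\Phi|_H=\psi^{-1}$.

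\emph{Step 2: $\Phi$ is an isomorphism.} A diagram chase with the restriction sequences for $F$, for the image sheaf $\Image\Phi$, and for $E$ --- using torsion-freeness to kill the relevant sheaf-$\mathcal{T}or$'s against $\mathcal{O}_H$, and surjectivity of $\Phi|_H=\psi^{-1}$ --- shows $\Coker(\Phi)|_H=0$ and $\Ker(\Phi)|_H=0$. Hence the supports of $\Ker\Phi$ and $\Coker\Phi$ miss $H$; but a closed subset of $\mathbb{P}^n_{\kk}$ disjoint from the hyperplane $H$ lies in the affine chart $\mathbb{P}^n_{\kk}\setminus H$ and is therefore finite. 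In particular $\Ker\Phi$ is a torsion subsheaf of the torsion-free sheaf $F$, so $\Ker\Phi=0$; thus $\Phi$ is injective with cokernel $C$ of finite support, in particular $\codim\operatorname{Supp} C\geq 2$. Applying $\mathcal{H}om(-,\mathcal{O})$ to $0\to F\xrightarrow{\Phi}E\to C\to 0$ and using that $\mathcal{H}om(C,\mathcal{O})=\mathcal{E}xt^1(C,\mathcal{O})=0$ (as $\mathbb{P}^n_{\kk}$ is regular and $\codim\operatorname{Supp} C\geq 2$), I find that $\Phi^\vee\colon E^\vee\to F^\vee$ is an isomorphism. Dualizing once more and using reflexivity of $E$ and $F$ (so the biduality maps $E\xrightarrow{\sim}E^{\vee\vee}$, $F\xrightarrow{\sim}F^{\vee\vee}$ are isomorphisms identifying $\Phi$ with $\Phi^{\vee\vee}$), I conclude that $\Phi$ is an isomorphism. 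Finally $\Psi:=\Phi^{-1}\colon E\to F$ is an isomorphism with $\Psi|_H=(\Phi|_H)^{-1}=\psi$, as desired.

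I expect the delicate point to be Step 2: one has to make sure that restricting to $H$ neither spawns a spurious kernel nor destroys injectivity of $(\Image\Phi)|_H\to E|_H$ --- precisely where torsion-freeness of $E$ and $F$ (through vanishing of the sheaf-$\mathcal{T}or$'s) does the work --- and the final vanishing $C=0$ must be wrung out of reflexivity of the \emph{target} $E$ together with the codimension bound $n\geq 2$, not from torsion-freeness alone. By contrast, the cohomological hypothesis $\Ext^1(F,E(-1))=0$ is used exactly once, to produce the lift in Step 1; it plays no role in showing that the lift is an isomorphism.
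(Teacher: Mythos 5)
Your proof is correct. The paper does not prove this statement itself—it is quoted from Abe--Yoshinaga—so there is no internal argument to compare with; your route (lift $\psi^{-1}$ through $0\to E(-1)\to E\to E|_H\to 0$ using $\Ext^1_{\mathbb P^n_{\kk}}(F,E(-1))=0$, show the lift is injective with cokernel supported off $H$, hence finite, using torsion-freeness and Nakayama, and then force the cokernel to vanish by dualizing twice, using $\codim\geq 2$ and reflexivity of $E$ and $F$) is the standard proof of this kind of statement and matches what the citation relies on, including the ``moreover'' clause, since $\Psi=\Phi^{-1}$ restricts to $\psi$. A minor simplification: in the kernel step you only need that the Tor sheaf of the torsion-free image sheaf against $\mathcal O_H$ vanishes and that the composite $F|_H\to E|_H$ equals the injective map $\psi^{-1}$; injectivity of $(\Image\Phi)|_H\to E|_H$ is not actually required.
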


\subsection{Serre twist of $B_{\bar X}$}

As we have seen,   there exists a specific ideal $\Jot_{H^\dagger}$
of $A_{\bar X}$. 
By taking suitable tensor power of $\Jot_{H^\dagger}$, 
we may consider ``Serre twist''
$\Jot_{H^\dagger}^{c}$ 
of $A_{\bar X}$ 
described  at \cite[lemma 2.6]{T9}.  
Likewise, there is also a ``Serre twist'' of the sheaf $B_{\bar X}$.
To emphasize the similarity with Serre twist,
we use notation $\Serre{B}{k}$ as the ``$B$-version'' 
of $\Jot_{H^\dagger}^{-k}$. (Note the sign convention which is used
in order to fit the notation of the usual Serre twist.) 
To be more precise, we define as follows.
\begin{definition}\label{definition:SerreTwist} 
For any $c\in \Z$ we define a presheaf
$\Serre{B}{k}$ on $\bar X$ by
$$
\Serre{B}{k}(U)
=\{
s \in B_{X}(U\cap X) ;  \ \ord_{H^\dagger}(N_B(s))\geq -k p^n
\},
$$
where $\ord_{H^\dagger}$ denotes the  valuation with respect to
the prime divisor $H^\dagger$.
For $k=-1$, we also use the notation
$ \Jot_{B,H^\dagger}$ for $\Serre{B}{-1}$. 
\end{definition}
When $B=A$,
$ \Jot_{B,H^\dagger}$ is identical with the $\Jot_{H^\dagger}$
defined as above.

\begin{lemma}
\label{lemma:SerreTwist}
The following facts are true.
\begin{enumerate}
\item 
For any integer $k$, 
the presheaf $\Serre{B}{k}$
 defined in Definition \ref{definition:SerreTwist}  is actually  a sheaf.
\item  
For any integer $k$, 
the sheaf $\Serre{B}{k}$
 is an $B_{\bar X}$-{\it bimodule}.
\item  
For any integer $k$, 
the sheaf $\Serre{B}{k}$
is $\Oo_{\bar X}$-reflexive.
\item 
$\Jot_{B,H^\dagger}$ is an ideal sheaf of $B_{\bar X}$. Moreover,
for any positive integer $k$, we have a natural inclusion
$$
\Serre{B}{-k}
\supset (\Jot_{B,H^\dagger} )^k
$$
which is an isomorphism outside of a closed set of codimension at least $2$ on $\bar X$. 
\item We have
$$
\Serre{B}{-p}
= \Iscr_{H} B_{\bar X}
$$

\end{enumerate}

\end{lemma}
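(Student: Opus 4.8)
The plan is to reduce every assertion to a computation near the boundary divisor $H=\bar X\setminus X$, since on the affine piece $X$ the condition of Definition \ref{definition:SerreTwist} is vacuous and $\Serre{B}{k}|_{X}=B_X$. The core input is a computation on $A_{\bar X}$. In the generators of Proposition \ref{proposition:Agenerator} the element $u$ is \emph{normal} (that $uA_{\bar X}(U)=A_{\bar X}(U)u$ is visible from $[v,u]=u^3$, $[u,\bar\gamma_i]=0$, $[v,\bar\gamma_i]=u^2\bar\gamma_i$), we have $\Jot_{H^\dagger}=(u)$ and $\Iscr_H=(u^p)$ with $u^p$ central, and $\ord_{H^\dagger}(N_A(u))=p^n$: indeed $N_A(u)^p=N_A(u^p)=(u^p)^{p^n}$, the reduced norm of a central element of our rank-$p^{2n}$ algebra being its $p^n$-th power, while $\ord_{H^\dagger}(u^p)=p$ because $u^p$ cuts out $H$. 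Consequently, for $s\in A_X(U\cap X)$ one has $u^{k}s\in A_{\bar X}(U)\iff N_A(u^{k}s)\in\Oo_{\bar X}(U)\iff\ord_{H^\dagger}(N_A(s))\ge -kp^{n}$ (the function $N_A(u^{k}s)$ is already regular on $U\cap X$, so lying in $\Oo_{\bar X}(U)$ just means no pole along $H$, i.e. $H^\dagger$-order $\ge 0$). This yields the local normal form $\Serre{A}{k}|_U=u^{-k}A_{\bar X}|_U$ near $H$ (and $=A_X$ on $X$), where $\Serre{A}{k}$ denotes the $B=A$ instance of Definition \ref{definition:SerreTwist}.

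Next I transport this to $B$ off a small set. By Corollary \ref{corollary:ae} there is a closed $F\subset\bar X$ of codimension $\ge 3$ with $B_{\bar X}$ locally isomorphic to $A_{\bar X}$ on $V_0=\bar X\setminus F$. Since the norms in question are reduced norms, any local isomorphism $\varphi:B_{\bar X}|_W\isoto A_{\bar X}|_W$ restricts over $W\cap X$ to an isomorphism of Azumaya algebras, whence $N_A\circ\varphi=N_B$ there; as $\Serre{B}{k}$ and $\Serre{A}{k}$ are cut out by the same norm inequality along the same divisor $H^\dagger$, the induced isomorphism carries $\Serre{B}{k}|_W$ onto $\Serre{A}{k}|_W=u^{-k}A_{\bar X}|_W$. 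Writing $\pi=\varphi^{-1}(u)$ (again normal, with $\Jot_{B,H^\dagger}|_W=(\pi)$ and $\Iscr_HB_{\bar X}|_W=(\pi^p)$) we obtain $\Serre{B}{k}|_{V_0}=\Jot_{B,H^\dagger}^{-k}|_{V_0}$, which locally equals $\pi^{-k}B_{\bar X}|_{V_0}$. From this every claim follows over $V_0$: it is a sheaf; it is a $B_{\bar X}|_{V_0}$-sub-bimodule of $B_{\bar X}$ localized along $H$, because $\pi$ normalises $B_{\bar X}$; it is $\Oo_{\bar X}$-reflexive, being locally $\Oo_{\bar X}$-isomorphic to $B_{\bar X}|_{V_0}$ (reflexive by Corollary \ref{corollary:ae}(2)) via multiplication by $\pi^{k}$; and $\Serre{B}{-p}|_{V_0}=(\pi^p)B_{\bar X}|_{V_0}=\Iscr_HB_{\bar X}|_{V_0}$.

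To pass from $V_0$ to $\bar X$ I show $\Serre{B}{k}=j_*\bigl(\Serre{B}{k}|_{V_0}\bigr)$ for $j:V_0\hookrightarrow\bar X$. If $U\subseteq\bar X$ is open and $s\in\Serre{B}{k}(U\cap V_0)\subseteq B_X(U\cap V_0\cap X)$, then — $B_X$ being $\Oo_X$-locally free and $F\cap X$ of codimension $\ge 3$ in $X$ — $s$ extends uniquely to $\tilde s\in B_X(U\cap X)$, and $N_B(\tilde s)=N_B(s)$ as rational functions, so the order along the codimension-one divisor $H^\dagger$ (which is not inside $F$) is unchanged; hence $\tilde s\in\Serre{B}{k}(U)$ and $\Serre{B}{k}(U)=\Serre{B}{k}(U\cap V_0)$. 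Therefore $\Serre{B}{k}$ is a sheaf ((1)); it is the pushforward of a coherent reflexive sheaf across the codimension-$\ge 2$ locus $F$ on the smooth variety $\bar X$, hence coherent and $\Oo_{\bar X}$-reflexive ((3)); and since $B_{\bar X}$ is reflexive, $B_{\bar X}=j_*(B_{\bar X}|_{V_0})$, so the bimodule structure propagates ((2)). For (4): $\Jot_{B,H^\dagger}=\Serre{B}{-1}\subseteq B_{\bar X}$ (directly, since $\ord_{H^\dagger}(N_B(s))\ge p^{n}>0$ forces $N_B(s)$ regular along $H$) is a two-sided ideal, being one on $V_0$; the inclusion $(\Jot_{B,H^\dagger})^{k}\subseteq\Serre{B}{-k}$ is checked stalkwise, is an equality over $V_0$ (where both equal $(\pi^{k})B_{\bar X}$) and holds automatically over $F$, hence is an isomorphism off the codimension-$\ge 3$ set $F$. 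Finally (5) holds globally because both sides are reflexive and agree on $V_0$ (it also follows directly from the norm formula as in the first paragraph, using that $u^{-p}$ is central and $N_B(u^{-p})=u^{-p^{n+1}}$).

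The delicate step is the first two paragraphs — in particular the assertion that an $\Oo_{\bar X}$-algebra isomorphism genuinely intertwines the two reduced norms, and the extraction from Proposition \ref{proposition:Agenerator} of the local model in which $\Jot_{B,H^\dagger}$ is generated by a normal element whose norm has $H^\dagger$-order exactly $p^n$. Once that normal form is in hand the remainder is the routine ``extend a reflexive sheaf across a codimension-$\ge 2$ locus'' bookkeeping. Note that the whole argument rests on Corollary \ref{corollary:ae}, and hence ultimately on the Auslander regularity of $A_{\bar X}$ established in Proposition \ref{proposition:main}.
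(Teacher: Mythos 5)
Your argument is correct, but it takes a genuinely different and heavier route than the paper. The paper handles (1), (2) and the first half of (4) directly from the definition: the reduced norm is multiplicative and $\ord_{H^\dagger}$ is a valuation tested at the generic point of $H^\dagger$, so the defining condition is stable under gluing in $B_X$, under left/right multiplication by sections of $B_{\bar X}$, and under products; (3) is proved by checking torsion-freeness and normality and invoking the criterion of \cite[Proposition 1.6]{Ha2}; and (5) is exactly your closing reflexivity argument ($\Iscr_H B_{\bar X}$ is $\Oo_{\bar X}$-locally isomorphic to $B_{\bar X}$, hence reflexive, and the two sides agree outside codimension $\geq 2$). You instead build the explicit local model $\Serre{A}{k}=u^{-k}A_{\bar X}$ near $H$ (with $u$ normal), transport it to $B$ over $V_0$ via Corollary \ref{corollary:ae}, and recover $\Serre{B}{k}$ on all of $\bar X$ as the Hartogs/reflexive extension across the codimension-$\geq 3$ set $F$. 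This is logically admissible in the paper's ordering (Corollary \ref{corollary:ae} precedes the lemma and does not use it), but it makes the lemma depend on the Auslander regularity of $A_{\bar X}$ (Proposition \ref{proposition:main}) and on Ramras' theorem, which the paper's own proof of this lemma avoids entirely. What your route buys: the local normal form makes transparent why the norm-defined subsets are additive subgroups and $B_{\bar X}$-bimodules (not literally obvious, since $N_B$ is not additive), and it anticipates the local structure of the filtration that the paper only extracts later, in Lemma \ref{lemma:main}, under the extra hypothesis that $B_{\bar X}$ is Auslander regular. Two points to tighten: the sheaf axiom is most cleanly checked directly (gluing in $B_X$ plus the fact that the order condition is a condition on a single rational function), rather than through the chart-local identifications on $V_0$; and in (5) you should say explicitly that $\Iscr_H B_{\bar X}$ is reflexive, being $B_{\bar X}$ twisted by the invertible ideal $\Iscr_H$, before invoking agreement off codimension $2$.
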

\begin{proof}
(1),(2),(4)  are easily verified by the definition.

\noindent
(3): It is also easily to check that $\Serre{B}{k}$ is normal and torsion free, 

\noindent (5):
The right hand side is locally isomorphic to $B_{\bar X}$ as an $\Oo_{X}$-module
 and is in particular a reflexive $\Oo_{\bar X}$-module. 
Since both hand sides are reflexive and are equal on outside of a
closed subset of codimension at least $2$, they are equal.
\end{proof}

As a result of the lemma above, we have a
descending chain $\{\Serre{B}{k}\}_{k\in \Z}$
of $B_{\bar X}$-bimodules. The associated graded module plays
an important role in this paper.

\subsection{Formally indecomposability of $B_{\bar X}$.}

There exists a symbol map
$$
\bar \rho_B: 
B_{\bar{X}}\to \Oo_{H^\dagger}.
$$
which is defined as follows:  there exists an open subset $V_0$ of $\bar X$
where  $B_{\bar X}$ is locally isomorphic to $A_{\bar X}$ . 
Then on the open set $V_0$, we may define $\rho_B$ using the map 
$\rho$ in Proposition \ref{proposition:symbol} via
such local isomorphisms. Note that the consistency of $\rho$ and the norm map 
assures us that $\rho_B$ so defined is independent of the choice of such 
local isomorphism.
We may then extend $\rho_B$ to the whole of $\bar X$
by using the normality.
The kernel of $\rho_{B}$ is equal to $\Jot_{B,H^\dagger}$. This fact is also 
verified at first on $V_0$ and then verified on the whole of $\bar X$ by
using the Hartogs property.

\begin{definition}
We put $\bar B_H=B_{\bar X}/\Jot_{B,H^\dagger}$.
\end{definition}

There exists a symbol map
$$
\bar \rho_B: 
B_{\bar{X}}\to \Oo_{H^\dagger}.
$$

\begin{proposition}\label{proposition:BOH}Following facts are true.
\begin{enumerate}
\item $
\bar B_H=B_{\bar{X}}/ \Jot_{B,H^\dagger} \subset \Oo_{H^\dagger}
$
\item \label{lemma:refextofbarB}
The reflexive hull of $\bar B_H$ as an $\Oo_{H}$-module is isomorphic to  $\Oo_{H^\dagger}$.
\end{enumerate}
\end{proposition}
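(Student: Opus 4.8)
The plan is to prove the two assertions in turn: (1) is a direct consequence of the construction of the symbol map, and (2) is an extension argument across a codimension-two locus, using the facts on reflexive sheaves recalled above.

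\emph{For (1).} By definition $\bar B_H=B_{\bar X}/\Jot_{B,H^\dagger}$, and it has already been recorded that the symbol map $\bar\rho_B:B_{\bar X}\to\Oo_{H^\dagger}$ has kernel exactly $\Jot_{B,H^\dagger}$. Hence $\bar\rho_B$ factors through an injection $\bar B_H\hookrightarrow\Oo_{H^\dagger}$, which is assertion (1). In particular $\bar B_H$ is an $\Oo_H$-submodule of $\Oo_{H^\dagger}$; and since $H^\dagger\to H$ is the finite flat covering obtained by adjoining $p$-th roots of the coordinate functions, $\Oo_{H^\dagger}$ is locally free, hence reflexive and torsion free, as an $\Oo_H$-module, so the subsheaf $\bar B_H$ is torsion free over $\Oo_H$ as well (recall $H\cong\mathbb P^{2n-1}$ is smooth, hence normal and integral).

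\emph{For (2).} First I would localize to the open set $V_0=\bar X\setminus F$ furnished by Corollary \ref{corollary:ae}. There $B_{\bar X}$ is locally isomorphic to $A_{\bar X}$, and by the very way $\bar\rho_B$ was built from the map $\rho$ of Proposition \ref{proposition:symbol} such a local isomorphism carries $\Jot_{B,H^\dagger}$ to $\Jot_{H^\dagger}$ and hence induces $\bar B_H|_{V_0}\isoto(A_{\bar X}/\Jot_{H^\dagger})|_{V_0}\isoto\Oo_{H^\dagger}|_{V_0}$; one checks this agrees with the inclusion of (1), so $\bar B_H\hookrightarrow\Oo_{H^\dagger}$ is an isomorphism over $V_0\cap H$. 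Next I would observe that $F\cap H$ has codimension at least $2$ in $H$: each irreducible component $Z$ of $F\cap H$ lies in $F$, so $\codim_{\bar X}Z\ge\codim_{\bar X}F\ge 3$, and since $\bar X$ is catenary and $H$ is a hyperplane, $\codim_HZ=\codim_{\bar X}Z-1\ge 2$.

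With these in hand the remainder is formal. Writing $j:V_0\cap H\hookrightarrow H$ for the inclusion, the reflexive hull $(\bar B_H)^{\vee\vee}$ of the torsion-free $\Oo_H$-module $\bar B_H$ restricts on $V_0\cap H$ to $(\bar B_H|_{V_0\cap H})^{\vee\vee}=\Oo_{H^\dagger}|_{V_0\cap H}$, the last sheaf being locally free hence equal to its own bidual; and since a reflexive sheaf on the normal integral scheme $H$ is recovered from its restriction to the complement of a codimension-two closed set (the normality property in \cite[Proposition 1.6]{Ha2}), we get $(\bar B_H)^{\vee\vee}=j_*\!\left(\Oo_{H^\dagger}|_{V_0\cap H}\right)$. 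Applying the same property to the reflexive sheaf $\Oo_{H^\dagger}$ itself gives $\Oo_{H^\dagger}=j_*\!\left(\Oo_{H^\dagger}|_{V_0\cap H}\right)$, so $(\bar B_H)^{\vee\vee}\isoto\Oo_{H^\dagger}$, which is (2). I do not expect a genuine difficulty here; the only points that must be handled with care are that $\Oo_{H^\dagger}$ is $\Oo_H$-reflexive and that $F\cap H$ really has codimension at least $2$ in $H$, after which the argument is the same codimension-two extension already used for $B_{\bar X}$ in Corollary \ref{corollary:ae}.
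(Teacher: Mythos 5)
Your proof is correct and follows essentially the same route as the paper: part (1) via the fact that $\Ker(\bar\rho_B)=\Jot_{B,H^\dagger}$, and part (2) via the observation that $\bar B_H$ and the $\Oo_H$-reflexive (indeed locally free) sheaf $\Oo_{H^\dagger}$ agree outside a locus of codimension at least $2$ in $H$. You merely spell out the codimension count for $F\cap H$ and the $j_*$-extension step more explicitly than the paper does.
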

\begin{proof}
(1): We consider the symbol map
$$
\bar \rho_B: 
B_{\bar{X}}\to \Oo_{H^\dagger}
$$
and we claim
$$
\Ker(\bar \rho_B)=\Jot_{B,H^\dagger}.
$$
Indeed, by using the local generators we may easily verify that 
the both hand sides are equal on $V_0$. 
Since they are reflexive $\Oo_X$-modules, they
are equal on the whole of $\bar{X}$.

\noindent (2): It goes without saying that $\Oo_{H^\dagger}$ is locally free $\Oo_{H}$ module. In particular, it is 
reflexive. On the other hand, the two sheaves $\Oo_{H^\dagger}$ and 
$\bar B_H$ coincide except at a locus of codimension $3$ in $\bar X$.

\end{proof}

\begin{proposition}\label{proposition:indecomposability}
For any closed point $\m$ of $\bar X$, 
There exists only one maximal ideal
of $B_{\bar X}$ which lies over $\m$.
\end{proposition}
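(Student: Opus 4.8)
The plan is to split according to whether the closed point $\m$ lies in the affine part $X$ or on the hyperplane $H=\bar X\setminus X$ at infinity, and in either case to reduce the statement to a property of the fiber algebra. Recall (cf.\ Lemma \ref{lemma:central}) that the maximal ideals of the stalk $B_{\bar X,\m}$ lying over $\m$ correspond bijectively to the maximal ideals of the finite-dimensional $\kappa(\m)$-algebra $\bar B:=B_{\bar X,\m}/\m B_{\bar X,\m}$, so it suffices to prove that $\bar B$ has exactly one maximal ideal.

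For $\m\in X$, the sheaf $B_X$ is by hypothesis Zariski locally isomorphic to $A_X$, hence $B_{\bar X,\m}\cong A\otimes_R R_\m$; as recalled in the proof of Proposition \ref{proposition:main}, the degree-zero part of $\gr_\m(A\otimes_R R_\m)$ is the simple algebra $M_{p^n}(\kappa(\m))$, so $\bar B$ is simple and there is nothing more to prove. One could instead invoke Corollary \ref{corollary:ae} together with the fact that all stalks of $A_{\bar X}$ are quasi local, but that only settles the points outside the codimension $\ge 3$ set $F$, and those all lie on $H$; the genuine work is for $\m\in H$.

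So let $\m\in H$. Here I would avoid any appeal to a local isomorphism with $A_{\bar X}$, which may fail on the bad locus $F$. Write $\bar{\Jot}$ for the image in $\bar B$ of the stalk of the two-sided ideal $\Jot_{B,H^\dagger}$. By Lemma \ref{lemma:SerreTwist}(4)(5) one has $(\Jot_{B,H^\dagger})^p\subseteq \Serre{B}{-p}=\Iscr_H B_{\bar X}$, whence at the stalk $(\Jot_{B,H^\dagger})_\m^{\,p}\subseteq \Iscr_{H,\m}B_{\bar X,\m}\subseteq \m B_{\bar X,\m}$, using precisely that $\m\in H$ so that $\Iscr_{H,\m}\subseteq \m$. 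Therefore $\bar{\Jot}^{\,p}=0$, so $\bar{\Jot}\subseteq \rad(\bar B)$, and the maximal ideals of $\bar B$ are the same as those of the finite-dimensional commutative $\kappa(\m)$-algebra $\bar C:=\bar B/\bar{\Jot}$, which is the fiber at $\m$ of the sheaf of rings $\bar B_H=B_{\bar X}/\Jot_{B,H^\dagger}$.

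It remains to show $\bar C$ is local, i.e.\ that $\Spec\bar B_H\to H$ has a single point over $\m$. By Proposition \ref{proposition:BOH} there are inclusions $\Oo_H\subseteq \bar B_H\subseteq \Oo_{H^\dagger}$ of sheaves of commutative rings with $\Oo_{H^\dagger}$ the reflexive hull of $\bar B_H$; since $\Oo_{H^\dagger}$ is a finite $\Oo_H$-module it is in particular finite, hence integral, over $\bar B_H$, so $\Spec\Oo_{H^\dagger}\to \Spec\bar B_H$ is surjective. On the other hand the canonical morphism $H^\dagger\to H$ is a homeomorphism on underlying spaces (indeed $|H^\dagger|=|H|$ by construction), and it factors as $H^\dagger=\Spec\Oo_{H^\dagger}\to \Spec\bar B_H\to H$; hence $\Spec\bar B_H\to H$ is a continuous bijection, and being a finite morphism it is closed, so it is a homeomorphism. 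In particular exactly one point lies over $\m$, so $\bar C$, and therefore $\bar B$, has a unique maximal ideal. The step needing care is the reduction $\bar{\Jot}^{\,p}=0$: it is what lets us replace the noncommutative and only generically understood ring $B_{\bar X,\m}$ by the commutative ring $\bar B_H$ governed by Proposition \ref{proposition:BOH}, and it is available exactly because $\m$ lies on the boundary $H$.
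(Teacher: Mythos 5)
Your proof is correct and follows essentially the same route as the paper: the same split into $\m\in X$ (where the fiber is a full matrix algebra, hence simple) and $\m\in H$, the same key inclusion $\Jot_{B,H^\dagger}^p\subset\Serre{B}{-p}=\Iscr_H B_{\bar X}$ forcing every maximal ideal over $\m$ to contain $\Jot_{B,H^\dagger}$, and the same reduction to the commutative sheaf $\bar B_H\subset\Oo_{H^\dagger}$ of Proposition \ref{proposition:BOH}. The only difference is cosmetic: where the paper exhibits the unique maximal ideal explicitly as $\m^\dagger\cap\bar B$ via a chain of domains, you deduce uniqueness from lying over for the integral extension $\bar B_H\subset\Oo_{H^\dagger}$ together with the homeomorphism $|H^\dagger|=|H|$, which is the same purely inseparable phenomenon packaged topologically.
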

\begin{proof}
When $\m$ is in the affine piece $X$, the result is a consequence of 
the structure theory of the sheaf of algebras $A_X$ on $X$ which 
is essentially well known (see for example \cite{T4}).
Let us treat the case where $\m$ is on the hyperplane $H$ at infinity.
Let $\M$ be a maximal ideal of $B_{\bar X}$ which lies over $\m$.
Since $\m$ is on $H$, we have $ \m\supset \Iscr_{H}A $ so that we have
$
\M \supset \Iscr_H B_{\bar X}.
$
By using Lemma \ref{lemma:SerreTwist}, we have 
$$
\Jot_{B,H^\dagger}^p
\subset
\Serre{B}{-p}
= \Iscr_{ H} B_{\bar X}
\subset \M.
$$
By using the maximality of $\M$, we easily see that
$\Jot_{B,H^\dagger}
\subset \M.
$
(otherwise we would have 
$\Jot_{B,H^\dagger}+\M=B_{\bar X}$
which implies in particular 
that $1$ is nilpotent in $B_{\bar X}$, which is absurd.)
Thus $\M$ corresponds to a maximal ideal 
$
\bar \M=
\M/
\Jot_{B,H^\dagger}$ of 
$\bar B=B_{\bar X}/ \Jot_{B,H^\dagger}$. 
As stated in Proposition \ref{proposition:BOH},
$\bar B$ is a subsheaf of $\Oo_{H^\dagger}$.  
Let us put
$$
\m^\dagger
\ \defeq
\{ f\in \Oo_{H^\dagger}| f^p \in \m\}
=\Ker(\Oo_{H^\dagger} \overset{\operatorname{Frob}}{\to} \Oo_H\to \Oo_H/\m).
$$
We may easily see from the definition that 
we have  $
\m^\dagger \cap \bar B \subset \bar \M
$ and so we have the following inclusion of commutative domains.
$$
\Oo_H/\m
\subset 
\bar B/(\m^\dagger \cap \bar B) 
\subset
 \Oo_{H^\dagger} /\bar \M.
$$
 Note that $\Oo_{H^\dagger} /\bar \M $ is a finite extension field of
the field
$\Oo_H/\m$. 
We conclude that the middle side of the above inclusion is also a field,
and that $\m ^\dagger \cap \bar B$ is a maximal ideal of $\bar B$.
We thus have $\m^\dagger \cap \bar B=\bar \M$. 

\end{proof}

\subsection{Behavior of $B_{\bar X}$ on loci where $B_{\bar X}$ is 
Auslander regular}
%
%
The following lemma, which is essentially a consequence of the theorem of Ramras
(Theorem \ref{theorem:Ramras}),
is an central heart of our next result. 
\begin{lemma}\label{lemma:main}
Let $B$ be a ring Zariski locally isomorphic to a Weyl algebra
$A=A_n(\kk)$ over a field of positive characteristic $p$.
We equip $B_{\bar X}$ with the filtration $\{\Serre{B}{-k}\}_{k =0}^\infty$. 
Let $U$ be a Zariski open subset of $\bar X$.
We assume $B_{\bar X}$ is Auslander regular on $U$.
Then the following facts are true on $U$:
\begin{enumerate}
\item $B_{\bar X}$ is locally free as an $\Oo_{\bar X}$-module.
\item  $ \gr_0(B/\Iscr_{H} B)=\bar B_{H}=B/\Jot_{H^\dagger}$ is isomorphic to 
$\Oo_{H^\dagger}$ as 
an  $\Oo_{H}$-algebra.
\item For each $k$, $\gr_k (B/\Iscr_{H} B)$ is a locally free 
$\Oo_{H^\dagger}$-module of rank one.
\end{enumerate}
\end{lemma}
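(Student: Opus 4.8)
The plan is to localize at an arbitrary closed point $x\in U$ and argue ring-theoretically on the stalk $B_x=B_{\bar X,x}$, using the theorem of Ramras (Theorem \ref{theorem:Ramras}) as the engine. First I would observe that by Proposition \ref{proposition:indecomposability} the stalk $B_x$ is a demi local ring with a unique maximal ideal $\M$ lying over $\m=\Oo_{\bar X,x}$, and that the $\Jot_{B,H^\dagger}$-adic topology on $B_x$ coincides with the $\Iscr_H B_x$-adic one up to the $p$-th power (Lemma \ref{lemma:SerreTwist}(5)), which together with Lemma \ref{property:NAK} shows $(B_x,\M)$ is quasi local; it is also finite over the regular local ring $\Oo_{\bar X,x}$ of dimension $2n\geq 2$. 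Since $B_{\bar X}$ is assumed Auslander regular on $U$, the stalk $B_x$ has in particular finite injective dimension, so Theorem \ref{theorem:Ramras} applies and yields that $B_x$ is $\Oo_{\bar X,x}$-free. This is (1); sheafifying over $U$ gives local freeness of $B_{\bar X}|_U$ as an $\Oo_{\bar X}$-module.

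For (2), I would combine (1) with Corollary \ref{corollary:ae} and Proposition \ref{proposition:BOH}. On $X\cap U$ there is nothing to prove since $B_{\bar X}$ is locally isomorphic to $A_{\bar X}$ there, so the content is near points $x\in H\cap U$. By (1), $B_{\bar X}|_U$ is $\Oo_{\bar X}$-locally free, hence so is its quotient $\bar B_H=B_{\bar X}/\Jot_{B,H^\dagger}$ as an $\Oo_H$-module after we check $\Jot_{B,H^\dagger}$ is the correct-size direct summand — concretely, that $\bar B_H$ is $\Oo_H$-locally free of rank $p^{2n-1}$, matching $\Oo_{H^\dagger}$. Then Proposition \ref{proposition:BOH}(2) says the reflexive hull of $\bar B_H$ over $\Oo_H$ is $\Oo_{H^\dagger}$; since a locally free module over a regular local ring is already reflexive, the inclusion $\bar B_H\subset\Oo_{H^\dagger}$ of Proposition \ref{proposition:BOH}(1) must be an equality on $U$. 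This gives the $\Oo_H$-algebra isomorphism $\gr_0(B/\Iscr_H B)=\bar B_H\isoto\Oo_{H^\dagger}$.

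For (3), I would pass to the associated graded algebra $\gr(B/\Iscr_H B)=\bigoplus_{k=0}^{p-1}\gr_k(B/\Iscr_H B)$ with respect to the filtration $\{\Serre{B}{-k}\}$, which by Lemma \ref{lemma:SerreTwist}(5) is a finite $\Oo_{H^\dagger}$-algebra, its degree-zero piece being $\Oo_{H^\dagger}$ by (2). Each $\gr_k$ is a coherent $\Oo_{H^\dagger}$-module; it is reflexive by Lemma \ref{lemma:SerreTwist}(3) (the graded pieces of reflexive sheaves that agree in codimension one are reflexive), and it has rank one over $\Oo_{H^\dagger}$ because over the affine piece $X$, where $B_{\bar X}\isoto A_{\bar X}$, the graded pieces are visibly rank one (this is read off from the coordinates of Proposition \ref{proposition:Agenerator}, where $\Jot_{H^\dagger}^{k}/\Jot_{H^\dagger}^{k+1}$ is free of rank one over $\Oo_{H^\dagger}$). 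A reflexive rank-one module over the locally factorial scheme $H^\dagger$ is invertible by Proposition \ref{proposition:rank_one_reflexive}; since $B_{\bar X}|_U$ is $\Oo_{\bar X}$-locally free by (1), multiplication gives surjections $\gr_1^{\otimes k}\to\gr_k$ that are isomorphisms in codimension one, hence isomorphisms of invertible sheaves — so each $\gr_k(B/\Iscr_H B)$ is locally free of rank one over $\Oo_{H^\dagger}$ on $U$.

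The main obstacle is step (1): everything downstream is bookkeeping with reflexive sheaves, but getting $\Oo_{\bar X}$-freeness of $B_{\bar X}$ on $U$ requires checking carefully that $B_x$ genuinely satisfies the hypotheses of Ramras's theorem — that it is quasi local (not merely demi local), finite over the regular local ring $\Oo_{\bar X,x}$, and of finite injective dimension over itself. The finite injective dimension is exactly where the Auslander regularity hypothesis on $U$ is used, and the quasi-local property is where Proposition \ref{proposition:indecomposability} and the relation $\Jot_{B,H^\dagger}^p=\Iscr_H B_{\bar X}$ do the work; I would make sure these are invoked in the right order so that the machinery of Lemma \ref{property:NAK} and Theorem \ref{theorem:Ramras} both apply.
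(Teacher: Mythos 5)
Your step (1) is sound and is essentially the paper's argument: the paper also reduces to a point of $U$, uses Proposition \ref{proposition:indecomposability} to get a unique maximal ideal $\M$ over $\m$, and applies Ramras (Theorem \ref{theorem:Ramras}) -- the only difference being that the paper passes to the completion $\hat B_{\bar X,\M}\isoto B_{\bar X,\m}\otimes_{\Oo_{\bar X,\m}}\hat\Oo_{\bar X,\m}$ while you apply Ramras to the stalk directly, which is fine once quasi-locality is checked as you indicate.

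Parts (2) and (3), however, have a genuine gap: you assume exactly what has to be proved. In (2) you write that $\bar B_H=B_{\bar X}/\Jot_{B,H^\dagger}$ is $\Oo_H$-locally free of rank $p^{2n-1}$ ``after we check'' it -- but local freeness of $B_{\bar X}$ over $\Oo_{\bar X}$ does not pass to the quotient by $\Jot_{B,H^\dagger}$, since $\Jot_{B,H^\dagger}$ is only defined via the norm and is in no evident way a direct summand; a priori $\bar B_H$ could be a non-reflexive $\Oo_H$-submodule of $\Oo_{H^\dagger}$ (e.g.\ of ideal type supported up to codimension $\geq 2$), in which case Proposition \ref{proposition:BOH}(2) gives no equality. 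Closing this hole is the substantive content of the paper's proof: it introduces $\mathcal M=\Serre{B}{-(p-1)}/\Iscr_H B_{\bar X}$, shows $\mathcal M$ is reflexive (this is where (1), i.e.\ local freeness of $B_{\bar X}/\Iscr_H B_{\bar X}$, is used), extends the $\bar B_H$-action to an $\Oo_{H^\dagger}$-action by reflexivity so that $\mathcal M$ becomes an invertible $\Oo_{H^\dagger}$-module by Proposition \ref{proposition:rank_one_reflexive}, chooses a local generator $s_1$, and proves that $f\mapsto f\cdot s_1$ is onto by lifting a section $s$ to $m_s$ and checking, via the norm and the very definition of the norm based extension, that $b_s=m_s s_1^{-1}$ is a regular section of $B_{\bar X}$. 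Your step (3) inherits the same problem: graded pieces of a filtration by reflexive sheaves need not be reflexive (Lemma \ref{lemma:SerreTwist}(3) concerns the $\Serre{B}{-k}$ themselves, not their quotients), the generic rank must be computed on $H\cap V_0$ (the graded pieces vanish on $X$, so ``over the affine piece $X$'' is not the right locus), and the surjectivity of the multiplication maps $\gr_1^{\otimes k}\to\gr_k$ is not automatic -- in the paper it is obtained from the explicit sections $m_{(i)}=s_1^{i}/z^{i}$ built from the generator $s_1$ and a local equation $z$ of $H$. Without an argument of this kind, (2) and (3) remain unproved assertions.
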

\begin{proof}
(1):
Let $\m$ be a closed point of $U$. 
As we proved in Proposition \ref{proposition:indecomposability}, 
there exists only one maximal ideal $\M$ of $B_{\bar X}$ which
lies over $\m$.
The $\M$-adic completion 
$\hat B_{\bar{X},\M}=\varprojlim_{j} B_{\bar X}/\M^j$ of $B_{\bar X}$ is isomorphic to 
$B_{\bar X,\m} \otimes_{\Oo_{\bar X,\m}} \hat \Oo_{\bar X,\m}$.
It is a ring which is finite over a central 
local ring $\hat \Oo_{\bar X,\m}$.
By the theorem of Ramras (Theorem \ref{theorem:Ramras}),
 we see that 
$\hat B_{X,\M}$ is free over $\hat R_{\m}$.
 Thus $B_{\bar X}$ is 
locally free on $U$.

Let us prove (2). This is done in several steps.

Let us first take a look at a sheaf
$ \mathcal M= \Serre{B}{-(p-1)}/\Iscr_H B_{\bar X}$.
We  note that  
the norm map $N_B: B_{\bar X} \to \Oo_{\bar X}$ factors through
a map
$$
\bar N_B: B_{\bar X}/\Iscr_H B_{\bar X} \to 
\Oo_{\bar X}/\Iscr_{H}^{p^n}
$$
so that the sheaf $\mathcal M$ may be rewritten as follows.
$$
\mathcal M=\{s \in B_{\bar X}/\Iscr_H B_{\bar X} 
\ |\  \ord_{H^\dagger}(\bar N_{B}(s))\geq (p-1)p^n\}
$$ 
Since the sheaf $B_{\bar X}/\Iscr_H B_{\bar X}$ is locally free,
by checking the normality and torsion freeness, 
we see that $\mathcal M$ is a reflexive $\Oo_{\bar X}$ module.


Let us then focus on the action $\alpha$ 
of $\bar B_{H}=B_{\bar X}/\Jot_{H^\dagger}$ 
on the sheaf $\mathcal M$:
$$
\alpha:\bar B_{H} \otimes \mathcal M \to \mathcal M
$$
Note that the reflexive extension of 
$\bar B_{H}$
 as a $\Oo_{H}$-module is equal to  $\Oo_{H^\dagger}$ 
(Proposition \ref{proposition:BOH}).
For any $f\in \Oo_{H^\dagger}(U\cap H)$, the action $\alpha$ gives 
a local section $\alpha(f)$ outside of a closed subset of $U\cap H$
 of codimension at least $2$. 
Then by virtue of the $\Oo_{H}$-reflexivity of $\mathcal M$, $\alpha(f)$
extends to the whole of $U\cap H$. 
In this way, We may extend the action $\alpha$ 
 to an action of $\Oo_{H^\dagger}$ on $\mathcal M$.
So $\mathcal M$ is now an  $\Oo_{H^\dagger}$ module.
Knowing that $\mathcal M$ is normal and torsion free as an $\Oo_H$-module, 
(and that these
conditions are irrelevant of whether we regard it as $\Oo_H$-module or
 $\Oo_{H^\dagger}$-module,) we see  that
$\mathcal M$ is a reflexive $\Oo_{H^\dagger}$-module of rank one.
As we reviewed in Proposition \ref{proposition:rank_one_reflexive},
this implies that the sheaf 
 $\mathcal M$  is a locally free $\Oo_{H^\dagger}$-module.

Let us now take a point $P$ on $H$ and argue locally around $P$. There exists an affine open subset $V$ of 
$U$ and a section $s_1 \in B_{\bar X}(U)$ such that its residue $\bar s_1 $
generates $\mathcal M$ as an $\Oo_{H^\dagger}$-module. 
By shrinking $V$ if necessary, we may assume that 
the section $s_1$ generates $\Serre{B}{-(p-1)}$ on $V$.
The norm $N(\m_1)$ restricted to $V$ has zeros only on $H$. 
We may also assume that $\Iscr_{H}$ is generated by a single element $z$ 
(``a local defining function of $H$) on $V$.
Let us consider a map $\varphi$ defined as follows.
$$
\varphi: \bar B_{H}\ni f \mapsto f .s_1 \in  \mathcal M.
$$
Then the kernel of $\varphi$ is equal to $\Jot_{H^\dagger}/\Iscr_H B_{\bar X}$.
(This again is firstly trivially verified outside a locus  of 
codimension greater than $2$ and then we may extend the fact to the whole of $H$
by using the reflexivity of $\Jot_{H^\dagger}$.)   
Let us now prove that $\varphi$ is surjective.
$N(s_1)/z^{p^{n-1}(p-1)}$ is invertible on $V$.
Thus 
$N(s_1)/z^{p^{n-1}(p-1)}$ is  invertible  
on a neighborhood $V_0$ of $P$.
Let $s \in \mathcal M_P$ be an arbitrary element of the stalk at $P$.
There exists $m_s \in B_{\bar X,P}$ such that $s=m_s \mod \Iscr_H$.
By definition, $ m_s$ is a germ of a section (also denoted by $m_s$)
of $B_{\bar X}$ on a Zariski open neighborhood $V_1$ of $P$.
Let us then consider a section
$$
b_s=m_s s_1^{-1}.
$$
It is regular on $\complement H \cap V_0 \cap V_1$. By looking at norms,
and using the definition of norm based extensions, we see that $b_s$ defines
a regular section of $B_{\bar X}$  on $V_0 \cap V_1$.
We see immediately that $\varphi(b_s)=s$.
We thus conclude $B_{\bar X}/\Jot_{H^\dagger}$ is locally free $\Oo_{H^\dagger}$-module.

Let us prove (3). 
By choosing a suitable 
products of $s_1$ and $z$, we may create several sections
of $B_{\bar X}$.
Indeed, for each $i \in \Z$, let us define a section $m_{(i)} $ of 
$\Serre{B}{i}$
as follows.
$$
m_{(i)}=s_1^{i}/z^{i }
$$
Then by using an similar argument as above we see that $m_{(i)}$ is a
section of 
$\Serre{B}{i}$
and that it generates 
$\Serre{B}{i}$
as an $\Oo_{H^\dagger}$-module on $V_0 \cap V_1$.

\end{proof}

\subsection{A characterization of Weyl algebras using Auslander regularity}
We now state the second main result of this paper.
\begin{theorem}
Let 
$A=A_n(\kk)$ be  the Weyl algebra over a field $\kk$ 
of positive characteristic $p$.
We assume that the dimension $2 n$ of the base space $X=\Spec(Z(A))$ is greater 
than $2$ (i.e. $n\geq 2$).
Let $B$ be an algebra which is Zariski locally isomorphic to $A$ in the sense
that $B$ is an $Z(A)$-algebra whose associated sheaf $B_X$ on $X$ is Zariski
locally isomorphic to $A_X$.
Then the norm based extension $B_{\bar X}$ of  $B_{X}$
is locally Auslander regular if and only if $B$ is
isomorphic to $A$ as a $Z(A)$-algebra.
\end{theorem}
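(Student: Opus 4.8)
The plan is to prove the two implications separately, the forward direction being the substantive one. For the ``if'' direction, if $B \cong A$ as a $Z(A)$-algebra then $B_{\bar X} \cong A_{\bar X}$ as sheaves of $\Oo_{\bar X}$-algebras (the norm based extension depends only on the pair $(B_X, N_B)$, which is determined by $B$ up to the chosen CCR coordinates), and then Proposition~\ref{proposition:main} gives that $A_{\bar X}$, hence $B_{\bar X}$, is Auslander regular; so $B_{\bar X}$ is locally Auslander regular. This direction is essentially immediate.

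For the ``only if'' direction, assume $B_{\bar X}$ is locally Auslander regular, so it is Auslander regular on all of $\bar X$. I would first invoke Lemma~\ref{lemma:main} with $U = \bar X$: this gives that $B_{\bar X}$ is a locally free $\Oo_{\bar X}$-module, that $\bar B_H = B_{\bar X}/\Jot_{B,H^\dagger} \isoto \Oo_{H^\dagger}$, and that each graded piece $\gr_k(B_{\bar X}/\Iscr_H B_{\bar X})$ is an invertible $\Oo_{H^\dagger}$-module of rank one. Next I would exploit the symbol diagram of Proposition~\ref{proposition:symbol} together with the fact that $\bar B_H \isoto \Oo_{H^\dagger}$ on all of $H$, to identify the associated graded algebra $\gr_\bullet(B_{\bar X}/\Iscr_H B_{\bar X})$ with $\gr_\bullet(A_{\bar X}/\Iscr_H A_{\bar X})$ as sheaves of graded $\Oo_{H^\dagger}$-algebras: both are generated in degree $1$ by a line bundle, and the degree-$1$ piece, being a rank-one invertible $\Oo_{H^\dagger}$-module generated (locally) by $s_1/z$ whose norm is a unit times $z^{p^n(p-1)/\cdots}$, is forced to be the trivial line bundle $\Oo_{H^\dagger}$ — here the key point is that the norm $N_B$ controls the Chern class of each graded piece, and the divisor of $N_B(s_1)$ is supported on $H^\dagger$ with the same multiplicity as in the $A$-case. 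Consequently $\gr_\bullet (B_{\bar X}/\Iscr_H B_{\bar X}) \isoto \gr_\bullet(A_{\bar X}/\Iscr_H A_{\bar X})$, and since $\Iscr_H B_{\bar X} = \Serre{B}{-p}$ is itself controlled by the same filtration, one lifts this to an isomorphism $B_{\bar X}|_H$-formally $\isoto A_{\bar X}|_H$-formally — more precisely $B_{\bar X} \otimes \Oo_{\bar X}/\Iscr_H^N \isoto A_{\bar X} \otimes \Oo_{\bar X}/\Iscr_H^N$ for all $N$.

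At this stage I would bring in Corollary~\ref{corollary:ae}: the corresponding rank-one reflexive $A$-module $W$ with $B = \End_A(W)$ gives $W_{\bar X}$, which is $A_{\bar X}$-locally free off a closed set $F$ of codimension $\geq 3$, so $B_{\bar X}$ and $A_{\bar X}$ are already locally isomorphic on $\bar X \setminus F$. Combined with the isomorphism of formal completions along $H$ just established, I would argue that $W_{\bar X}$, viewed as a reflexive sheaf on $\bar X = \mathbb P^{2n}$ with $2n \geq 4 > 3$, restricts to a split (indeed trivial) bundle on a hyperplane — for this I would restrict to a generic hyperplane $H'$ not contained in $F$ and use that $W_{\bar X}|_{H'}$ is locally free there and has trivial norm/Chern data, then apply Theorem~\ref{Theorem:abeyoshinaga} (Abe--Yoshinaga, the generalized Horrocks criterion) to conclude $W_{\bar X}$ splits as a sum of line bundles on $\mathbb P^{2n}$; rank one plus triviality of the determinant (again read off from $N_W$, whose divisor we have shown is trivial) forces $W_{\bar X} \isoto \Oo_{\bar X}$, hence $W \isoto A$ as an $A$-module and $B = \End_A(W) \isoto \End_A(A) = A$.

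The main obstacle, I expect, is the middle step: upgrading the graded/formal information supplied by Lemma~\ref{lemma:main} into a statement about $W_{\bar X}$ on an honest hyperplane that feeds Theorem~\ref{Theorem:abeyoshinaga}. Lemma~\ref{lemma:main} only tells us each $\gr_k$ is invertible of rank one; to get that the line bundle is \emph{trivial} one must pin down its degree using the norm map, and to pass from ``trivial graded pieces'' to ``$W_{\bar X}|_{H'}$ split'' one needs either a vanishing of the relevant $\Ext^1$ (as in Theorem~\ref{Theorem:splitting}, applied with the filtration by powers of $\Iscr_H$) or a direct dévissage showing the filtration splits. I would handle this by induction on $k$, at each stage using Theorem~\ref{Theorem:splitting} with $E = \Serre{B}{-k}/\Serre{B}{-k-1}$ and its $A$-counterpart, the $\Ext^1$-vanishing coming from the fact that line bundles on $\mathbb P^{2n}$ restricted to $H^\dagger$ (a thickening of $\mathbb P^{2n-1}$) have no obstructions in the relevant degrees. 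Once the hyperplane restriction is shown to split, the rest is the clean application of Abe--Yoshinaga and the elementary identification $\End_A(\Oo_{\bar X}\text{-line bundle of the right twist}) = A$.
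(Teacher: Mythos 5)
Your ``if'' direction is fine and matches the paper. The ``only if'' direction, however, has genuine gaps precisely at the points you flag. First, the passage through the module $W$ does not work as stated: $W$ has rank one over $A_{\bar X}$, hence rank $p^{2n}$ as an $\Oo_{\bar X}$-module, so ``rank one plus trivial determinant forces $W_{\bar X}\isoto \Oo_{\bar X}$'' is not even the right shape of statement — a splitting $W_{\bar X}\isoto\bigoplus_j\Oo_{\bar X}(c_j)$ with $\sum_j c_j=0$ does not make all $c_j$ vanish, and even a splitting with prescribed twists would not by itself give $W\isoto A$ as an $A$-module (that is exactly the nontrivial point: plenty of non-free $W$ exist, and they are distinguished only by the $A_{\bar X}$-structure at infinity). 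Second, the generic-hyperplane step fails numerically: the bad locus $F$ from Corollary \ref{corollary:ae} has codimension $\geq 3$ in $\bar X=\mathbb P^{2n}$, so for $n\geq 2$ a generic hyperplane $H'$ still meets $F$; thus $W_{\bar X}|_{H'}$ is not locally free on all of $H'$, and no argument is supplied that it splits there, so the hypothesis of Theorem \ref{Theorem:abeyoshinaga} is not verified on your route. Also, your claim that the degree-one graded piece is the \emph{trivial} line bundle is incorrect: Lemma \ref{lemma:main} only gives invertibility over $\Oo_{H^\dagger}$, and the graded pieces are twists (this is why the paper compares splitting types with those of $A_{\bar X}$ rather than proving triviality).

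The paper's actual argument avoids $W$ altogether and works with $B_{\bar X}$ on the distinguished hyperplane $H$ itself, where Lemma \ref{lemma:main} applies everywhere: the filtration of $B_{\bar X}/\Iscr_H B_{\bar X}$ by the $\Serre{B}{k}$ has invertible $\Oo_{H^\dagger}$-graded pieces, Hartshorne's decomposition plus $\Ext^1$-vanishing between line bundles on $H\isoto\mathbb P^{2n-1}$ splits the restriction, and Abe--Yoshinaga then splits $B_{\bar X}$ on $\bar X$ with the \emph{same} twists $c_j$ as $A_{\bar X}$. The part entirely missing from your proposal is the endgame that actually produces a $Z(A)$-algebra isomorphism: Theorem \ref{Theorem:splitting} (the extension-of-isomorphism statement) gives an $\Oo_{\bar X}$-isomorphism $\Psi:\Serre{B}{1}\to\Serre{A}{1}$ compatible with the symbol maps, whence global sections $\beta_i=\Psi(\gamma_i)$ generating $B$; asymptotic commutativity together with $H^0(\bar X,B_{\bar X})=\kk$ forces $[\beta_i,\beta_j]$ to be constants, one checks $\beta_i^p=\gamma_i^p$ (this is what pins down the $Z(A)$-algebra structure, which your $\End_A(W)$ conclusion would also have to address), and a symplectic linear change of the $\beta_i$ matches the constants with $h_{ij}$. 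Without either this explicit generator argument or a repaired proof that $W$ is $A$-free, the ``only if'' direction is not established.
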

\begin{proof}
The ``if'' part is a consequence of Proposition \ref{proposition:main}.
Let us prove the ``only if'' part.
We assume that $B_{\bar X}$ is locally Auslander regular.
We first consider the sheaf of algebras $B_{\bar X}/\Iscr_H B_{\bar X}$
which has a filtration given 
by $\{\Serre{B}{k}/\Iscr_H B_{\bar X}\}_{k=0}^{p-1}$.
As we have seen in Lemma \ref{lemma:main},
for each $k\in \{0,1,2,\dots,p-1\}$,
the sheaf  $\gr_k (B_{\bar X}/\Iscr_H B_{\bar X})$ 
is a locally free $\Oo_{H^\dagger}$
module of rank one. 
According to a theorem of Hartshorne(\cite[corollary6.4]{Ha3}),
it is   
isomorphic to a direct sum of invertible $\Oo_H$-modules.
Since the extension groups 
$ \Ext^1_{\Oo_{H}}(\mathcal L_1,\mathcal L_2)$ 
for any invertible sheaves $\mathcal L_1, \mathcal L_2$ on 
$H \isoto \mathbb P^{2 n-1}$ 
which are involved  are all equal to zero, we see that
the whole of $B_{\bar X}/\Iscr_H B_{\bar X}$ is isomorphic to 
a direct sum of invertible
 $\Oo_H$-module.
Then by using the theorem of Horrocks (Theorem \ref{Theorem:abeyoshinaga},
we see that $B_{\bar X}$ is isomorphic
to a direct sum of invertible $\Oo_{\bar X}$-modules.
Namely, we have an isomorphism
$$
B_{\bar X} \isoto 
\bigoplus_{j} \Oo_{\bar X}(c_j) \qquad(\exists c_1,c_2,\dots \in \Z)
$$
as  $\Oo_{\bar X}$-modules. 
The numbers $c_j$ are identical with the ones which appear 
in the splitting of 
$\oplus_{k=0}^{p-1}
\Oo_{H^\dagger}(-k H^\dagger)$ 
over $\Oo_H$ and are in particular independent of $B$.
We thus have 
$$
\dim H^0 (\bar X, B_{\bar X} \otimes \Oo_{\bar X}(k H))
=\dim H^0 (\bar X, A_{\bar X} \otimes \Oo_{\bar X}(k H))
$$
for any integer $k$.
Furthermore, we know by using Theorem \ref{Theorem:splitting} that 
there exists an $\Oo_{\bar X}$-module isomorphism $\Psi$ such that
the following diagram commutes.
$$
\xymatrix{
\Serre{B}{1} 
 \ar[d]^{\Psi}  
\ar[r]
& 
\Serre{B}{1}/B_{\bar X}
\ar[r]^{\phantom{space}\isoto}
& \Oo_{H^\dagger} (1)
\\
\Serre{A}{1}
\ar[r]& 
\Serre{A}{1}
/A_{\bar X}
\ar[ur]^{\isoto} 
}
$$
By a direct calculation (or by using the direct sum decomposition
of $\Serre{A}{1}$), we see that $\Gamma(\bar X, \Serre{A}{1})$ is 
a vector space with a basis  $1,\gamma_1,\gamma_2,\gamma_3,\dots,\gamma_{2n}$.
Accordingly,  
$\Gamma(\bar X, \Serre{B}{1})$ is 
a vector space with a basis  
$1,\Psi(\gamma_1),\Psi(\gamma_2),\Psi(\gamma_3),\dots,\Psi(\gamma_{2n})$.

For each $i,j$, commutator $[\beta_i,\beta_j]$ is apparently a section of 
$H^0(\bar X, \Serre{B}{2})$.

By using the ``asymptotic commutativity'', we see that it is actually a
section of
$H^0(\bar X, B_{\bar X})$.
Since $H^0(\bar X,A_{\bar X})=\kk$, we have $H^0(\bar X,B_{\bar X})=\kk
$ so we conclude that 
there exist some constants $c_{i j} \in \kk$ such that
$$
[\beta_i,\beta_j]=c_{i j}
$$
holds.

It is also easy to see that $\{\beta_i\}$ generates $B$.
Then it follows that $\{\beta_i^p\}$ are in the center of $B$.
$Z(B)=\kk[\beta_1^p,\dots,\beta_{2n}^p].$
It also follows that $\{c_{i j}\}$ is non-degenerate.
$\beta_i^p$ are sections of $\Oo_{\bar X}(H)$. 
By looking at the behavior at $H$, we see
$$
\beta_i^p=\gamma_i^p.
$$
By a suitable linear change of coordinates we obtain a
section $\tilde \beta_i$ of $B$ such that 
$$
[\tilde\beta_i,\tilde\beta_j]=h_{i j}
$$
holds.
The shadow map
should be symplectic. 
So we have $c_{i j}=h_{i j}$.
In other words, $B$ is isomorphic to $A$.

\end{proof}

\bibliography{tsuchimoto}
\bibliographystyle{plain}
\begin{align*}
\phantom{OOOOOOOOOOOOOOOOOOOO}
&\text{Department of mathematics \endgraf} \\
&\text{Kochi University \endgraf} \\
&\text{Akebonocho, Kochi city 780-8520, Japan  \endgraf} \\
&\text{e-mail: docky@kochi-u.ac.jp}
\end{align*}

\end{document}